\documentclass[a4paper,11pt]{article}
\usepackage{mathtools}
\usepackage{microtype}
\usepackage[dvipsnames]{xcolor}
\usepackage{amsthm}
\usepackage{amssymb}
\usepackage{comment}
\usepackage{authblk}
\usepackage[active]{srcltx}
\usepackage{tikz}
\usepackage[normalem]{ulem}
\usepackage{csquotes}
\usepackage[hidelinks]{hyperref}

\usepackage{geometry}
\newtheorem{definition}{Definition}[section]
\newtheorem{lemma}[definition]{Lemma}
\newtheorem{proposition}[definition]{Proposition}
\newtheorem{theorem}[definition]{Theorem}

\theoremstyle{definition}
\newtheorem{remark}[definition]{Remark}
\newtheorem{notation}[definition]{Notation}

\numberwithin{equation}{section}

\newcommand{\dd}{\mathop{}\!{\mathrm {d}}}
\newcommand{\ii}{\mathrm {i}}
\newcommand{\ee}{\mathrm {e}}
\newcommand{\weakto}{\rightharpoonup}

\DeclareMathOperator{\curl}{curl}
\DeclareMathOperator{\re}{Re}
\DeclareMathOperator{\Div}{div}
\DeclareMathOperator{\supp}{supp}
\DeclareMathOperator{\dist}{dist}
\DeclareMathOperator{\Span}{Span}

\DeclareMathOperator{\ordo}{o}
\DeclareMathOperator{\Ordo}{\mathcal{O}}

\DeclarePairedDelimiter{\abs}         {\lvert}{\rvert}
\DeclarePairedDelimiter{\measure}     {\lvert}{\rvert}
\DeclarePairedDelimiter{\norm}        {\lVert}{\rVert}
\DeclarePairedDelimiter{\innerproduct}{\langle}{\rangle}
\DeclarePairedDelimiter{\integerpart} {\lfloor}{\rfloor}
\DeclarePairedDelimiter{\set}         {\lbrace}{\rbrace}

\newcommand{\Domain}{U}
\newcommand{\Inner} {V}
\newcommand{\Outer} {W}

\newcommand{\closure}{\overline}
\newcommand{\conjugate}{\overline}
\newcommand{\Hamiltonian}{\mathcal {H}}
\newcommand{\Quadraticform}{\mathfrak {h}}
\newcommand{\Unitary}{\mathcal {U}}
\newcommand{\OuterB}{\sigma}
\newcommand{\Flux}{\Phi}

\newcommand{\reals}         {\mathbb {R}}
\newcommand{\complexes}     {\mathbb {C}}
\newcommand{\integers}      {\mathbb {Z}}
\newcommand{\naturalnumbers}{\mathbb {N}}

\newcommand{\Ab}{\mathbf {A}}
\newcommand{\Fb}{\mathbf {F}}
\newcommand{\Rb}{\mathbf {R}}

\begin{document}
\title {High Flux Asymptotics and Critical Phenomena\\ for the Magnetic Laplacian}

\author[1]{Emanuela L. Giacomelli\thanks{emanuela.giacomelli@unimi.it}}
\affil[1]{University of Milan, Department of Mathematics, Via Cesare Saldini 50, 20133 Milan, Italy}

\author[2]{Ayman Kachmar\thanks{ak292@aub.edu.lb}}
\affil[2]{Department of Mathematics {\normalfont and} PDE Research Unit–Center for Advanced Mathematical Sciences (CAMS) American University of Beirut,
  P.O. Box 11-0236 Riad El-Solh / Beirut 1107 2020 Lebanon}

\author[3]{Mikael Sundqvist\thanks{mikael.persson\_sundqvist@math.lth.se}}
\affil[3]{Department of Mathematics, Lund University, Sweden}

\maketitle
\abstract{%
  We study the lowest eigenvalue of the Neumann magnetic Laplacian in a planar
  domain divided into two regions, with piecewise constant magnetic fields that
  may scale differently in the inner and outer parts. Our aim is to describe the
  high-flux limit and determine when the ground-state energy is eventually
  monotone and when it continues to oscillate.

  We identify several asymptotic regimes according to the relative strength of
  the outer field. When the outer field is fixed, the lowest eigenvalue exhibits
  persistent oscillations and the low-energy states localize in the outer region.
  When the outer field grows more slowly, the behavior depends strongly on the
  geometry: it is eventually monotone for non-circular domains, while
  oscillations may persist for disks. In the critical regime, where the two
  fields are comparable, geometry and flux distribution both play a decisive
  role. When the outer field dominates, the problem reduces asymptotically to an
  effective operator on the inner region.

  These results show how uneven magnetic scaling, topology, and geometry shape
  the high-flux spectral behavior.\par}

\tableofcontents

\section{Introduction}\label{sec:intro}

Oscillations of the lowest eigenvalue of the magnetic Laplacian with respect to
the strength of the magnetic field are closely related to phase transitions in
superconductivity and the celebrated Little--Parks effect~\cite{LP}. In thin
cylindrical superconducting samples, multiple transitions between superconducting
and normal states occur as the magnetic flux varies, suggesting a topological
origin of this non-monotonic behavior. While such oscillations have been
observed in general thin domains~\cite{RS, ShSt} and persist up to very large
magnetic fields~\cite{HK}, they may also arise from magnetic field
inhomogeneities~\cite{FS, KP, KS} or surface effects~\cite{KS}. In contrast,
monotonicity for large fields (strong diamagnetism) typically holds under
homogeneous magnetic fields~\cite[Section~8.5]{FH-b}, and it has been shown to
extend to certain inhomogeneous, sign-changing magnetic fields, notably magnetic
steps~\cite{A}.

In this work we study whether oscillations of the lowest eigenvalue persist in
the presence of a strong magnetic field localized in the inner part of the
domain. Specifically, we let the magnetic intensity $b$ in the inner region tend
to $+\infty$, while allowing the outer intensity \(\OuterB(b)\) to scale
differently. The central question is whether the lowest eigenvalue eventually
becomes monotone, or continues to exhibit oscillatory behavior.

To be more precise, we consider a domain \(\Domain \subset \reals ^ 2\) that is
divided into an inner part \(\Inner\) and an outer part \(\Outer\), and apply the
parameter-dependent magnetic field \(B\) that equals \(b\) in the inner part
\(\Inner\) and \(\OuterB(b)\) in an exterior part \(\Outer\),
\begin{equation}\label{eq:magneticfield}
  B (x)
  =
  \begin{cases}
    b      & x \in \Inner,\\
    \OuterB(b) & x \in \Outer.
  \end{cases}
\end{equation}
We will also assume that \(\OuterB(b)\) is non-decreasing as a function of \(b\).
The picture we have in mind is given in Figure~\ref{fig:domains}. The domains
\(\Domain = \closure {\Inner} \cup \Outer\) as well as \(\Inner\) are supposed to
have smooth boundaries and to be simply connected.

\begin{figure}[htb]
    \centering
    \begin{tikzpicture}[scale= 0.7]
      \draw[preaction={draw}]
        plot[smooth cycle]
        coordinates{
        (-2,5) (0.8,6.5) (2.5,6.9) (3.5,6.5) (5, 6.5)
        (6,4) (6,2) (4,2) (2,1) (0, 1.3) (-2,2)
        } ;
      \draw[preaction={draw}]
        plot[smooth cycle]
        coordinates{
        (-1,4) (-0.5, 3) (1, 2) (2.5, 3) (3,5.5)
        } ;
      \node at (7, 2)   {\(\Domain \)} ;
      \node at (1, 3.5) {\(\Inner\)} ;
      \node at (4, 4)   {\(\Outer\)} ;
    \end{tikzpicture}
    \caption
      {Illustration of the domains \(\Domain, \Inner \subset \reals ^ 2\) with
      \(\closure{\Inner}\subset\Domain\) and \(\Outer = \Domain \setminus \closure
      {\Inner}\).}
    \label{fig:domains}
\end{figure}

For \(b \in \reals\) we denote by \(\Hamiltonian(b)\) the Neumann realization of
the magnetic Laplacian in \(L ^ 2(\Domain)\) associated with the magnetic field
\(B\). It is known to have compact resolvent and therefore a discrete spectrum.
We denote the corresponding increasing sequence of eigenvalues by \(\lambda _
1(b) \leq \lambda _ 2(b) \leq \ldots \). We are mainly interested in whether
\(\lambda_1(b)\) is monotone for large \(b\).

In what follows, we say that:
\begin{itemize}
    \item The map \(b \mapsto \lambda_1(b)\) is \emph{strongly non-diamagnetic} if it is not monotone on any interval of the form \((b_0,\infty)\), for any \(b_0 \geq 0\).
    \item The map \(b \mapsto \lambda_1(b)\) is \emph{strongly diamagnetic} if there exists \(b_0 > 0\) such that it is monotonically increasing for all \(b \in (b_0,\infty)\).
\end{itemize}

Our main results are as follows:
\begin{itemize}
  \item[(i)]
    \emph {Perfect oscillatory regime.}
    If the field strength \(\OuterB(b)\) in \(\Outer\) is constant, then
    \(\lambda_1(b)\) is strongly non-diamagnetic
    (Theorem~\ref{thm:mainoscillatory}).
  \item[(ii)]
    \emph {Subcritical regime.}
    If \(1 \ll \OuterB(b) \ll b\) as \(b \to +\infty\) and \(\Domain\) is not a
    disk, then \(\lambda_1(b)\) is strongly diamagnetic. If \(\Domain\) is a
    disk, then \(\lambda_1(b)\) is strongly non-diamagnetic
    (Theorem~\ref{thm:mainsubcritical}).
  \item[(iii)]
    \emph {Critical regime.}
    If \(\OuterB(b) = a b\), with \(a\) constant, and \(\Domain\) is a disk, then
    \(\lambda_1(b)\) exhibits both strongly diamagnetic and strongly
    non-diamagnetic behavior, depending on the value of \(a\). If \(\Domain\) is
    not a disk, then \(\lambda_1(b)\) is strongly diamagnetic.
  \item[(iv)]
    \emph {Supercritical regime.}
    If \(\OuterB(b) \gg b\) as \(b \to +\infty\), then \(\lambda_1(b)\) behaves
    like the lowest eigenvalue of the operator in \(\Inner\) with Dirichlet
    boundary conditions on \(\partial \Inner\). In this case, \(\lambda_1(b)\) is
    strongly diamagnetic, independently of the geometry of \(\Domain\).
\end{itemize}

The behavior of \(\lambda_1(b)\) is closely related to the magnetic flux through
the domain. In particular, the interplay between the flux contributions from
\(\Inner\) and \(\Outer\) will be a key mechanism behind the different regimes
described above. The flux \(\Flux\) of the magnetic field in the domain
\(\Domain\) is defined by
\[
  \Flux
  =
  \frac {1}{2\pi} \int _ {\Domain} B(x) \dd x
  =
  \frac{1}{2\pi}b\measure {\Inner} + \frac{1}{2\pi}\sigma(b)\measure {\Outer}.
\]
Since \(\sigma(b)\) is non-decreasing in \(b\), the limit \(b \to +\infty\) is
equivalent to the limit \(\Flux \to +\infty\), except possibly in the critical
regime (\(a<0\)). Notably, however, the flux can approach infinity while the
magnetic field scales differently in various regions of \(\Domain\), leading to a
rich variety of behaviors. We quantify this behavior for the lowest eigenvalue.

Our analysis also advances the understanding of spectral asymptotics for step
magnetic fields~\cite{AHK, FHKR, GV, G2}, with particular attention to
configurations featuring uneven scaling across the domain. Focusing on the case
where the magnetic field is strongly localized in an interior region while
remaining fixed elsewhere, we further investigate the distribution of higher
energy levels, extending the analysis beyond the ground-state energy.

Although we consider planar domains, our problem is naturally associated with a
cylindrical geometry under a constant magnetic field parallel to the cylinder's
axis. The study of general three-dimensional domains is of independent interest,
and additional geometric difficulties are expected (see \cite{AG}). Recently,
\cite{Pan} investigated the role of topology in three dimensions in the context
of superconductivity and the Little-Parks effect. Building on these results, the
study of uneven magnetic field scaling in three dimensions is a natural
continuation of our work, potentially extending the results in \cite{AG} and
providing new insights into the Little-Parks effect.

The remainder of this article is organized as follows. Section~\ref{sec:setup}
introduces the geometric setting and defines the magnetic Neumann Laplacian.
Section~\ref{sec:mainresults} presents the main results across the four
asymptotic regimes. Sections~\ref{sec:oscillatory}-\ref{sec:supercritical} are
devoted to the proofs: the perfect oscillatory regime
(Section~\ref{sec:oscillatory}), the subcritical regime
(Section~\ref{sec:subcritical}), the critical regime
(Section~\ref{sec:critical}), and the supercritical regime
(Section~\ref{sec:supercritical}). Two appendices collect technical proofs
(Appendix~\ref{app:proofs}) and the necessary model operators
(Appendix~\ref{sec:pre}).

\section{The setup}\label{sec:setup}

Let \(\Domain,\Inner \subset \reals ^ 2\) be bounded and simply connected domains
with smooth boundaries. We assume that \(\closure {\Inner} \subset \Domain\), and
set \(\Outer = \Domain \setminus \closure {\Inner}\). We refer again to
Figure~\ref{fig:domains} for the picture we have in mind, and we will call
\(\Inner\) the inner domain and \(\Outer\) the outer domain.

For each \(b > 0\) we let the magnetic field \(B\) be defined
by~\eqref{eq:magneticfield}. We will consider different choices of the field
strength \(\OuterB(b)\) in the outer domain; in particular we will let it be
constant or let it grow as a power of \(b\).

We denote by \(\Hamiltonian(b)\) the self-adjoint Neumann realization of the
magnetic Schrödinger operator, acting in \(L ^ 2(\Domain)\). Its action is given
by
\[
  (-\ii\nabla - \Ab) ^ 2
  =
  -\Delta
  + 2\ii \Ab \cdot \nabla
  + \ii \nabla \cdot \Ab
  + \abs {\Ab} ^ 2.
\]
Here \(\Ab\) is a (\(b\)-dependent) magnetic vector potential that generates the
magnetic field \(B = \curl \Ab\). There is freedom of choice of \(\Ab\); we get a
fixed one if we take \(\Ab = (-\partial _ 2 \phi, \partial _ 1 \phi)\) where
\(\phi\) is a scalar potential, which means that \(-\Delta \phi = B\) in
\(\Domain\) and \(\phi = 0\) on the boundary \(\partial \Domain\). This choice
has the advantageous property of being divergence free in \(\Domain\) and
orthogonal to every normal vector on the boundary \(\partial\Domain\). In fact,
we will repeatedly use the fact that such \(\Ab\) can, by superposition, be
written as a sum of two terms, responsible for the flux through the inner and
outer domains:
\begin{equation}\label{eq:Aassum}
  \Ab
  =
  b\Fb _ {\Inner} + \OuterB(b)\Fb _ {\Outer},
\end{equation}
with
\begin{equation}\label{eq:def-F}
  \Fb_{\Inner} = (-\partial_2\phi_{\Inner},\partial_1\phi_{\Inner}),
  \quad
  \Fb_{\Outer} = (-\partial_2\phi_{\Outer},\partial_1\phi_{\Outer}),
\end{equation}
where \(\phi_{\Inner},\phi_{\Outer}\) are respectively the unique solutions of
\[
  \begin{cases}
     -\Delta\phi_{\Inner} = \mathbf{1}_\Inner &\text{ in } \Domain,\\
     \phi _ {\Inner} = 0                      &\text{ on } \partial\Domain,
  \end{cases}
  \qquad
  \begin{cases}
    -\Delta\phi _ {\Outer} = \mathbf{1}_{\Outer} &\text{ in } \Domain,\\
    \phi _ {\Outer} = 0                          &\text{ on } \partial\Domain.
  \end{cases}
\]
Notice that by elliptic regularity and Sobolev inequality \(\Fb_{\Inner}\) and
\(\Fb_{\Outer}\) are continuous and satisfy
\begin{align}\label{eq:prop-A}
  &  \curl\Fb_{\Inner} = \mathbf{1}_{\Inner},
  && \Div\Fb_{\Inner} = 0 \text{ in } \Domain,
  && \nu\cdot\Fb_{\Inner} = 0 \text{ on } \partial\Domain,\nonumber \\
  &  \curl\Fb_{\Outer} = \mathbf{1}_{\Outer},
  && \Div\Fb_{\Outer} = 0 \text{ in } \Domain,
  && \nu\cdot\Fb_{\Outer} = 0 \text{ on } \partial\Domain.
\end{align}
The operator \(\Hamiltonian(b)\) can be defined via the quadratic form
\begin{equation}\label{eq:def-qf}
  \Quadraticform [\psi]
  \coloneqq
  \int _ {\Domain} \abs {(-\ii\nabla - \Ab)\psi} ^ 2 \dd x,
\end{equation}
with form domain given by the magnetic Sobolev space of order one, namely the set
of all \(\psi \in L^2(\Domain)\) such that \(\Quadraticform[\psi] < +\infty\). In
the present setting, this domain coincides with the standard Sobolev space
\(H^1(\Domain)\).

Turning to the domain of the operator, we impose magnetic Neumann boundary
conditions on the boundary $\partial \Domain$, namely
\[
    \nu\cdot (-\ii\nabla - \Ab)u = 0\quad \text{on } \partial \Domain.
\]
Since $\Ab$ is divergence free and satisfies $\nu\cdot \Ab = 0$ on
$\partial \Domain$, this condition reduces to the standard Neumann boundary condition
$\nu\cdot \nabla u = 0$ on $\partial \Domain$. Consequently,
\begin{equation}\label{eq: def domain Hb}
    \mathcal{D}(\Hamiltonian(b))
    =
    \set{u\in H^2(\Domain) \mid \nu\cdot \nabla u = 0\quad \text{on } \partial \Domain},
\end{equation}
and in particular the operator domain is independent of $b$.

As mentioned above, \(\Hamiltonian(b)\) has a compact resolvent, so we can
enumerate its eigenvalues in increasing order, \(\lambda _ 1(b) \leq \lambda _
2(b) \leq \ldots\). These eigenvalues can also be characterized by the min-max
principle,
\begin{equation}\label{eq:def-lambda-n}
  \lambda_n(b)
  =
  \inf\biggl\{\max_{\substack{\psi \in M\\ \psi \neq 0}}
      \frac{\Quadraticform[\psi]}
           {\norm {\psi} ^ 2}
           \colon
           M\subset H^1(\Domain),~\dim(M)=n \biggr\}.
\end{equation}

We will also consider the same quantities introduced above, but restricted to the
outer domain \(\Outer\), while keeping the same vector potential \(\Ab\). When
necessary, we indicate this by a superscript. For instance, we write
\(\Quadraticform ^ \Outer\) for the corresponding quadratic form,
\(\Hamiltonian^{\Outer}(b)\) for the corresponding operator and
\(\lambda_n^{\Outer}(b)\) for its eigenvalues. For the operator
\(\Hamiltonian^{\Outer}(b)\), we impose Dirichlet boundary conditions on the
portion of \(\partial \Outer\) that coincides with \(\partial \Inner\). The
operator is defined via its associated quadratic form, whose domain consists of
functions in the Sobolev space \(H^1(\Outer)\) with vanishing trace on the inner
boundary \(\partial \Inner\). Similar min-max characterizations hold in all the
cases we consider, with the appropriate modifications of the form domain.

\section {Main results}\label{sec:mainresults}

Throughout, we consider the domains \(\Domain,\Inner,\Outer\) to be fixed and
satisfying the properties described earlier. We apply the magnetic field \(B\)
from~\eqref{eq:magneticfield}, and vary the intensity \(\OuterB\) in the outer
domain \(\Outer\) to explore different regimes. We present our results separately
for each regime.

We emphasize that different choices of \(\OuterB\) dramatically change the
behavior of the lowest eigenvalue \(\lambda _ 1(b)\). We also highlight the role
of the geometry of the domain \(\Domain\), which becomes particularly significant
in the subcritical regime.

\subsection{The perfect oscillatory regime}\label{subsec:oscillatory}

In this regime, the magnetic field \(\OuterB(b)\) is constant in the outer region
\(\Outer\). For large \(b\), the magnetic flux through the inner domain
\(\Inner\) dominates the total flux, and, similarly to the case of an
Aharonov--Bohm solenoid~\cite{KP}, induces oscillatory behavior. By perfect
oscillations we mean that these oscillations persist for arbitrarily large values
of \(b\), due to an exact flux-periodicity mechanism.

\begin{theorem}\label{thm:mainoscillatory}
  If \(\OuterB(b) = a\), where \(a \in \reals\) is a fixed constant, then the map
  \(b \mapsto \lambda_1(b)\) is strongly non-diamagnetic.
\end{theorem}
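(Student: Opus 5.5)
We will prove the theorem by showing that \(\lambda_1(b)\) converges, as \(b\to+\infty\), to a function of \(b\) that is periodic in the inner flux \(b\measure{\Inner}/(2\pi)\) and not constant. Concretely, we aim for the asymptotics
\[
  \lambda_1(b) = \mu\bigl(\tfrac{b\measure{\Inner}}{2\pi}\bigr) + \ordo(1),
  \qquad b \to +\infty.
\]
Here \(\mu(\alpha)\) is the lowest eigenvalue of the Aharonov--Bohm-type operator on \(\Outer\). This operator has field \(a\) in \(\Outer\), an inner flux \(\alpha\) through the hole, Dirichlet condition on \(\partial\Inner\), and Neumann condition on \(\partial\Domain\).

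The function \(\mu\) is \(1\)-periodic. Shifting \(\alpha\) by an integer amounts to a gauge transformation \(\ee^{\ii\theta}\) in the multiply connected domain \(\Outer\), where \(\theta\) is a multivalued angle function winding once around \(\Inner\). The function \(\mu\) is also non-constant. One way to see this is a diamagnetic-type comparison. At \(\alpha\in\integers\) the circulation can be gauged away, while at \(\alpha = 1/2\) it cannot. The standard strict inequality for Aharonov--Bohm fluxes (Helffer--Hoffmann-Ostenhof--Owen type) then gives \(\mu(1/2)>\mu(0)\). Should that strictness be delicate, it suffices to know that \(\mu\) is real-analytic in \(\alpha\) and not constant. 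Non-constancy can be checked by observing that, were \(\mu\) constant, the ground state at \(\alpha=0\) could be continued in \(\alpha\) to a state with modulus independent of \(\alpha\). That contradicts the non-trivial circulation condition.

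Given the asymptotics, strong non-diamagnetism follows. Pick \(\alpha_0<\alpha_1\) with \(\mu(\alpha_0)<\mu(\alpha_1)\), and set \(\delta = \mu(\alpha_1)-\mu(\alpha_0)\). Consider the sequences \(b_n^{(j)} = 2\pi(n+\alpha_j)/\measure{\Inner}\) for \(j=0,1\). For \(n\) large, \(\lambda_1\) is within \(\delta/3\) of \(\mu(\alpha_1)\) at \(b_n^{(1)}\) and within \(\delta/3\) of \(\mu(\alpha_0)\) at \(b_{n+1}^{(0)} > b_n^{(1)}\). Hence \(\lambda_1\) decreases between these points, so it is not monotonically increasing on any half-line. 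Symmetrically, it increases between \(b_n^{(0)}\) and \(b_n^{(1)}\), so it is not decreasing either.

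It remains to prove the asymptotics, which we do by two bounds.

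\emph{Upper bound.} We take the ground state \(u_\alpha\) of the model operator on \(\Outer\) and extend it by zero to \(\Inner\); this gives an \(H^1(\Domain)\) function. In \(\Outer\) the potential \(\Ab = b\Fb_{\Inner} + a\Fb_{\Outer}\) satisfies \(\curl(b\Fb_{\Inner})=0\), and its circulation around \(\partial\Inner\) is \(b\measure{\Inner}\). So \(b\Fb_{\Inner}\) is gauge equivalent on \(\Outer\) to the Aharonov--Bohm potential with fractional flux \(\alpha\). This yields \(\lambda_1(b)\le \mu(\alpha)\) exactly, by min-max.

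\emph{Lower bound.} This is the main obstacle. We use an IMS partition \(\chi_1^2+\chi_2^2=1\), with \(\chi_1\) supported in a thin neighbourhood of \(\closure{\Inner}\) and \(\chi_2\) supported away from \(\Inner\). Roughly, the inner neighbourhood has width \(\epsilon\) inside \(\Outer\) together with \(\Inner\).

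On the support of \(\chi_1\), the field \(b\) is huge inside \(\Inner\). We use the bound \(\int|(-\ii\nabla-\Ab)\psi|^2 \ge b\int|\psi|^2\) for functions supported in \(\Inner\). Near the interface we use the step-field model results of Appendix~\ref{sec:pre}, which give energy of order \(b\) with a constant \(\beta>0\). A first attempt is to show that any eigenfunction with bounded energy has \(\norm{\psi}_{L^2(\Inner)}\to0\) and is small on \(\partial\Inner\) in \(H^{1/2}\)-sense. On the support of \(\chi_2\), the energy is bounded below by the eigenvalue of the model on \(\Outer_\epsilon=\Outer\setminus\{\dist(\cdot,\Inner)<\epsilon\}\), which is \(\mu_\epsilon(\alpha)\). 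The IMS error is \(\Ordo(\epsilon^{-2})\) times the mass in the transition layer. Fortunately that mass is itself small: the layer is contained in the region where the cost is large only for \(\Inner\)-mass, but Agmon-type decay from \(\Inner\) controls it.

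A cleaner alternative we would try first avoids cut-offs near the interface. Take the ground state \(\psi_b\) and pass to the limit using a compactness argument. The energy bound \(\lambda_1(b)\le \max\mu\) gives \(\norm{\psi_b}_{L^2(\Inner)}^2\le C/b\). After multiplying by the gauge phase on \(\Outer\), the sequence is bounded in \(H^1(\Outer)\). A weak limit along a subsequence with \(\alpha_b\to\alpha\) has zero trace on \(\partial\Inner\) and unit norm. Lower semicontinuity of the form then yields \(\liminf \lambda_1(b)\ge \mu(\alpha)\). Combined with continuity of \(\mu\), this proves the asymptotics.
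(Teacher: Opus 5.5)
This is essentially the paper's proof: the zero extension gives $\lambda_1(b)\le\lambda_1^{\Outer}(b)$, and your ``cleaner alternative'' is exactly the compactness argument of Proposition~\ref{prop:asyequi}. Periodicity and non-constancy of $\lambda_1^{\Outer}$ then conclude, so the IMS route is not needed; its appeal to the step model is also misplaced, since the field in $\Outer$ stays bounded. The vanishing trace on $\partial\Inner$, which you only assert, comes in the paper from the multiplicative trace inequality applied to $\abs{u}$ on $\Inner$. There $\abs{u}$ is bounded in $H^1$ by diamagnetism, and $\norm{u}_{L^2(\Inner)}^2=\Ordo(1/b)$ follows from the Neumann lower bound of order $b$ on $\Inner$.

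One caution applies equally to the paper's Lemma~\ref{lem:HHOO}. The HHOO mechanism behind $\mu(1/2)>\mu(0)$ uses two facts that hold only when $a=0$: at integer flux the potential is then gauge-trivial, so $\mu(0)$ is the non-magnetic energy, and $\mu$ is even under complex conjugation. For $a\neq0$, conjugation maps $(\alpha,a)$ to $(-\alpha,-a)$, and already when $\Inner$ and $\Domain$ are concentric disks, $\alpha=0$ need not be a critical point of $\mu$. So for $a\neq0$ you should claim only non-constancy of $\mu$. That is all the theorem needs, but your analytic-continuation sketch does not establish it.
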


The case $a=0$ was previously obtained in~\cite[Theorem~1.4]{KS1}. We establish
the non-monotonicity by showing that (see Proposition~\ref{prop:asyequi}), for
large \(b\),
\[
  \lambda _ 1 (b) \sim \lambda _ 1 ^ {\Outer}(b),
\]
and by observing that the map \(b \mapsto \lambda _ 1 ^ {\Outer}(b)\) is periodic
in \(b\) (Lemma~\ref{lem:HHOO}, see also~\cite{HHOO}). This periodicity is a pure
flux effect, arising from the fact that integer fluxes can be removed by gauge
transformations, as in the Aharonov--Bohm case. The asymptotic equivalence
suggests that the ground eigenfunctions concentrate in the outer domain
\(\Outer\) as \(b\to\infty\). We show that this is indeed the case
(Lemma~\ref{lem:concentration}).

The asymptotic equivalence extends to higher eigenvalues,
\[
  \lambda_n(b) \sim \lambda_n^{\Outer}(b), \qquad \text{for all } n,
\]
and the corresponding concentration properties of the eigenfunctions remain valid
(see {Propositions~\ref{prop:asyequi}, \ref{prop:conv-ef} and
\ref{prop:ev-high-N}}).

\subsection{The subcritical regime}\label{subsec:subcritical}

We consider here intensities \(\OuterB(b)\) that diverge as \(b\to\infty\), but
at a slower rate than the intensity \(b\) in the inner part of the domain. For
simplicity, we take \(\OuterB(b) = b ^ {\varrho}\), with \(0 < \varrho < 1\).

\begin{theorem}\label{thm:mainsubcritical}
  If \(\OuterB(b) = b ^ {\varrho}\), where \(0 < \varrho < 1\) is a fixed constant, then two distinct behaviors occur, depending on the geometry of the domain \(\Domain\):
  \begin{enumerate}
   \item
      If \(\Domain\) is a disk, then the map \(b \mapsto \lambda _ 1(b)\) is strongly non-diamagnetic.
    \item
      If \(\Domain\) is not a disk,
      then the map \(b
      \mapsto \lambda _ 1(b)\) is strongly diamagnetic.
  \end{enumerate}
\end{theorem}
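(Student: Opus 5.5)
The approach is to show that, in the subcritical regime, the ground state lives near \(\partial\Domain\) on the scale \(\OuterB(b)^{-1/2}\). Then \(\lambda_1(b)\) is governed by the standard boundary asymptotics for a constant field \(\OuterB(b)\), perturbed by the flux \(\Flux\) through \(\Domain\).

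\textbf{Step 1 (localization).} Since \(1 \ll \OuterB(b) \ll b\), the interior region \(\Inner\) costs at least \(b\) in energy. Indeed, Agmon-type estimates combined with an IMS partition of unity give, in \(\Inner\) and near \(\partial\Inner\), the lower bound \(\min(b,\Theta_0\text{-type step value}\cdot b)\gg\OuterB(b)\). Points of \(\Outer\) away from \(\partial\Domain\) cost at least \(\OuterB(b)\). Points near \(\partial\Domain\) cost \(\Theta_0\OuterB(b)\), with \(\Theta_0<1\) the de Gennes constant. Hence \(\lambda_1(b)=\Theta_0\OuterB(b)+o(\OuterB(b))\), and the ground state concentrates exponentially near \(\partial\Domain\).

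\textbf{Step 2 (non-disk case).} Refining with the known two-term boundary asymptotics gives
\[
  \lambda_1(b)=\Theta_0\OuterB(b)-C_1\kappa_{\max}\OuterB(b)^{1/2}+\Ordo\bigl(\OuterB(b)^{1/2-\eta}\bigr),
\]
where \(\kappa_{\max}\) is the maximal curvature of \(\partial\Domain\). When \(\Domain\) is not a disk, the curvature is non-constant, so the eigenfunction localizes near the maxima of curvature at the scale \(\OuterB(b)^{-1/8}\) or smaller. There the flux through \(\Inner\) is gauged away locally: near a boundary point the field is constant \(\OuterB(b)\), and the extra potential \(b\Fb_\Inner\) is curl-free in \(\Outer\), hence locally a gradient on a simply connected neighbourhood. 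Diamagnetism then follows from the Fournais–Helffer argument via Hellmann–Feynman. The right derivative satisfies
\[
  \lambda_1'(b)=\bigl\langle\partial_b\Ab\,\cdot\,\ldots\bigr\rangle,
\]
which is comparable to \(\OuterB'(b)\,\Theta_0\bigl(1+o(1)\bigr)>0\). To prove this, write \(\partial_b\Ab=\Fb_\Inner+\OuterB'(b)\Fb_\Outer\). The \(\Fb_\Inner\) term is a gradient plus a Wilson-loop constant near the localization point. After the local gauge change, its contribution is exponentially small, since the derivative of the eigenvalue with respect to the gauge phase is controlled by the tunnelling through the non-localized part of the boundary.

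\textbf{Step 3 (disk case).} When \(\Domain\) is a disk, I separate variables with the rotationally symmetric potential; if \(\Inner\) is not concentric, I first gauge \(b\Fb_\Inner\) on the annular neighbourhood of \(\partial\Domain\) to a pure Aharonov–Bohm term \(\frac{b\measure{\Inner}}{2\pi r}e_\theta\) plus a gradient. The angular momentum \(m\) then enters only through \(m-\Flux\). Using the 1D de Gennes-type reduction, the effective boundary energy is
\[
  \Theta_0\OuterB+C_1\OuterB^{1/2}/R+c\,\bigl(m-\Flux-\text{const}\cdot\OuterB R^2\bigr)^2+\ldots
\]
with \(c>0\). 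This energy is minimized over \(m\in\integers\), and the resulting term \(\min_m(\ldots)^2\) is periodic in \(\Flux\). Taking \(b\) along two sequences, one where the fractional part equals \(0\) and one where it equals \(1/2\), shows that \(\lambda_1\) oscillates with amplitude of order \(c>0\), up to an error \(o(1)\). Monotonicity on any \((b_0,\infty)\) is then ruled out by checking that along increasing \(b\) the fractional part returns to \(0\) after passing \(1/2\), while the smooth part \(\Theta_0\OuterB(b)\) changes by only \(o(1)\) over a flux period \(\Delta b\sim 1\) when \(\varrho<1\).

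\textbf{Main obstacle.} I expect the hardest part to be precisely this \(o(1)\) control. \(\OuterB(b+\Delta b)-\OuterB(b)\sim\varrho b^{\varrho-1}\) is small, which is fine. But the error terms in the boundary asymptotics must be \(o(1)\) uniformly, not merely \(o(\OuterB^{1/2})\). I would therefore derive the disk expansion exactly via the 1D fiber operators, rather than through the general asymptotics.
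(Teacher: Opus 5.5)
Your proposal is correct and follows essentially the paper's route. It uses Agmon localization at $\partial\Domain$. For non-disks, it uses the two-term expansion, localization at maximal curvature, a gauge in which $\Fb_\Inner$ vanishes there, and the Fournais--Helffer finite-difference/Hellmann--Feynman bound; the paper differentiates in $\sigma=b^{\varrho}$ rather than $b$, which is cosmetic. For the disk, it uses the Fournais--Sundqvist fiber reduction after gauging to an Aharonov--Bohm term, and compares values of $b$ where the oscillating argument has fractional part $0$ and $1/2$.

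A few slips do not affect the logic. Near $\partial\Inner$ the step-model bound is $\hat\beta_{\sigma/b}\,b=\sigma$, not $\gg\sigma$, though it is still above $\Theta_0\sigma$. In the disk expansion the curvature term is $-\tfrac13\varphi_0(0)^2\sigma^{1/2}$, not positive. The shift inside the square is $\xi_0\sigma^{1/2}+C_0$, not $\mathrm{const}\cdot\sigma R^2$.
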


The exceptional role of the disk may at first seem surprising. The key point is
that, in a general domain, the boundary layer mechanism in \(\Outer\) selects a
preferred point on \(\partial\Domain\), leading to a stabilization of the ground
state energy.

In contrast, rotational symmetry prevents the selection of a unique minimizer
along the boundary of the disk. Consequently, flux effects originating from the
inner region are not suppressed and can induce oscillations in the lowest
eigenvalue, as observed in related radial settings (see \cite{FS}), where the
magnetic field is proportional to a fixed profile and radial, with a minimum at
the boundary.

\subsection{The critical regime}\label{subsec:critical}

When the magnetic field strengths are of the same order, the situation
becomes more subtle. For \(a \in \reals\), we consider the field strength
\(\OuterB(b) = ab\) in \(\Outer\). The case \(a=0\) reduces to the perfect
oscillatory regime \(\OuterB(b)=0\), so we assume \(a\neq 0\).

If \(\Domain\) is not a disk, we again expect (and indeed obtain) monotonicity of
\(\lambda _ 1(b)\) for large \(b\). When \(\Domain\) is a disk, the situation is
more intricate. It is known (see~\cite{FH-b}) that \(\lambda _ 1(b)\) is monotone
increasing for large \(b\) if \(a = 1\), i.e.\ when the strength is uniform
across the entire domain \(\Domain\).

In this regime, the spectral constants $\Theta_0 \approx 0.59$ and
$\varphi_{0}(0)\approx 0.873$, where $\varphi_{0}$ denotes the
$L^2$-normalized ground state of the harmonic oscillator on the half-line
(see Appendix~\ref{sec:dG}), together with the flux
\[
  \Phi_0=\frac{a \measure{\Outer}}{2\pi}
  +\frac{\measure{\Inner}}{2\pi},
\]
play a decisive role, especially in the case of the disk.

\begin{theorem}\label{thm:maincritical}
  Suppose that \(\OuterB(b) = ab\), where \(a\not=0\) is fixed. The diamagnetic behavior depends on the parameter $a$, and in certain cases, on the geometry of the domain:
  \begin{enumerate}
  \item
    If \(-1<a<\Theta_0^{-1}\) and \(\Domain\) is the unit disk, then
    \begin{enumerate}
      \item
        the map \(b \mapsto \lambda _ 1(b)\) is strongly diamagnetic when
        $\Phi_0< a\sqrt {\Theta _ 0}/\varphi_{0}(0)^2$,
      \item
        the map \(b \mapsto \lambda _ 1(b)\) is strongly non-diamagnetic when
        $\Phi_0> a\sqrt {\Theta _ 0}/\varphi_{0}(0)^2$.
    \end{enumerate}
      \item
    If \(-1<a<\Theta_0^{-1}\) and \(\Domain\) is not a disk, then the map \(b
    \mapsto \lambda _ 1(b)\) is strongly diamagnetic.
  \item
    If \(a>\Theta_0^{-1}\), then the map \(b \mapsto \lambda _ 1(b)\) is
    strongly diamagnetic, independently of the geometry of \(\Domain\).
  \end{enumerate}
\end{theorem}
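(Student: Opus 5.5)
Strategy: derive a two-term (or three-term, for the disk) asymptotic expansion of \(\lambda_1(b)\) as \(b\to\infty\), and read monotonicity or non-monotonicity off it. Fix \(\OuterB(b)=ab\). The candidate energies at leading order are:
\begin{itemize}
  \item[--] \(\Theta_0\abs{a}b\), from boundary states localized near \(\partial\Domain\), where the field is \(ab\);
  \item[--] \(\min(\abs{a},1)\,b\), from bulk states in \(\Outer\) or \(\Inner\);
  \item[--] a contribution from the interface \(\partial\Inner\), where the field jumps from \(b\) to \(ab\).
\end{itemize}
By the step-field model operators of Appendix~\ref{sec:pre} (the analyses in \cite{AHK,GV}), the interface contribution is \(\beta_a b\) with some \(\beta_a\in(0,\min(1,\abs a))\) for \(-1<a<0\). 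For \(a>0\), \(a\neq1\), it is at least \(\min(a,1)b\).

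\textbf{Ordering of the energies.} For \(0<a<\Theta_0^{-1}\) we have \(\Theta_0 a<\min(a,1)\), so the boundary \(\partial\Domain\) wins. For \(-1<a<0\) one must compare \(\Theta_0\abs a\) with \(\beta_a\). I expect the appendix gives \(\beta_a\geq\Theta_0\abs a\) in that range, so again \(\partial\Domain\) wins; this is presumably why the threshold \(-1\) appears. For \(a>\Theta_0^{-1}\) the inner bulk level \(b\) lies strictly below \(\Theta_0 ab\), so the ground state lives in \(\Inner\), away from the jump.

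\textbf{Case 3 (\(a>\Theta_0^{-1}\)).} I would localize with an IMS partition at scale \(b^{-1/2+\delta}\) and compare with the Dirichlet-type problem on \(\Inner\) with uniform field \(b\). Its ground energy is \(b+\Ordo(b^{-\infty})\), with an exponentially small correction. The derivative of \(\lambda_1\) is controlled by the Feynman--Hellmann formula
\[
  \lambda_1'(b)=\int_\Domain 2\re\bigl(\conjugate{(-\ii\nabla-\Ab)u}\cdot(\Fb_\Inner+a\Fb_\Outer)\,u\bigr)\dd x .
\]
Because of Agmon decay the ground state sits where the field is \(b\). Using the identity trick (the integral equals \(b^{-1}\) times the local energy up to exponentially small errors), one obtains \(\lambda_1'(b)\to1>0\). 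This gives strong diamagnetism.

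\textbf{Cases 1--2: boundary localization.} Here the ground state localizes near \(\partial\Domain\), and the standard boundary expansion for uniform field \(ab\) near \(\partial\Domain\) applies:
\[
  \lambda_1(b)=\Theta_0\abs a b-C_1\kappa_{\max}\sqrt{\abs a b}+\Ordo(b^{1/4}) .
\]
If \(\Domain\) is not a disk, the curvature has a non-degenerate (after perturbation, some) maximum. The eigenfunction then concentrates at a point, and the Feynman--Hellmann argument of \cite[Section~8.5]{FH-b} (Fournais--Helffer) applies verbatim: the inner flux only enters through a gauge that is trivial near a single boundary point. This yields \(\lambda_1'(b)\to\Theta_0\abs a>0\).

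\textbf{Case 1: the disk.} For the disk, separate variables in angular momentum \(m\). The boundary circulation of \(\Ab\) equals the total flux \(b\Phi_0\), so the effective one-dimensional energy reads
\[
  \Theta_0\abs a b - C_1\sqrt{\abs a b}+\frac{\xi_0\,\varphi_0(0)^2}{\sqrt{\abs a b}}\bigl(m-b\Phi_0-\sqrt{\abs a b}\,\xi_0'\bigr)^2\cdot\mathrm{const}+\dots,
\]
to be minimized over \(m\in\integers\). The key step is a three-term expansion
\[
  \lambda_1(b)=\Theta_0\abs a b-C_1\sqrt{\abs a b}+C_2+\frac{3C_1\varphi_0(0)^2\sqrt{\Theta_0}}{\dots}\inf_{m}\bigl(m-\Phi_0b-\xi_0\sqrt{\abs a b}\bigr)^2+o(1),
\]
exactly as in the Fournais--Persson oscillation analysis for the disk. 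For uniform field (\(\Phi_0=a\)) the oscillatory term is known to be harmless, so what matters is the mismatch between \(\Phi_0\) and the "natural" flux \(a\sqrt{\Theta_0}/\varphi_0(0)^2\) at which the phase drift of the minimizing \(m\) compensates.

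The derivative of the oscillatory term has size \(\asymp\bigl(\Phi_0-a\sqrt{\Theta_0}/\varphi_0(0)^2\bigr)\) times a sawtooth of unit amplitude. If \(\Phi_0\) exceeds the threshold, this periodic sawtooth with jumps dominates the positive drift \(\Theta_0\abs a\) on a sequence of intervals, giving strong non-diamagnetism. Below the threshold, the derivative stays positive, giving strong diamagnetism.

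\textbf{Main obstacle.} The main difficulty is this last step. One needs the disk expansion with \(o(1)\) remainder and control of \(\lambda_1'(b)\), not just of \(\lambda_1\), uniformly in \(m\). One also needs to verify that the inner-field jump, located at distance \(\Ordo(1)\) from \(\partial\Domain\), only enters through the flux \(\Phi_0\), up to exponentially small errors. I would obtain this from Agmon estimates combined with the radial reduction, doing the careful one-dimensional perturbation theory of the de Gennes operator in the weighted radial variable.
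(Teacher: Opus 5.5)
Your case split and Case~2 match the paper. For Case~2, the argument of \cite[Section~8.5]{FH-b} only needs the set of maximal curvature to be a proper subset of $\partial\Domain$, which holds for every non-disk; no concentration at a point and no non-degeneracy is needed. The disk case, however, rests on an incorrect description of the oscillation.

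In Proposition~\ref{prop:disk-critical} the oscillatory term is of order one, namely $\xi_0\varphi_0(0)^2\Delta_b^2$ with $\Delta_b=\dist\bigl(\Phi_0 b+\xi_0(\abs{a}b)^{1/2}+C_0,\integers\bigr)$. It is not $\Ordo(b^{-1/2})$, as your first display suggests. Its $b$-derivative is $\pm2\xi_0\varphi_0(0)^2\Delta_b\bigl(\Phi_0+\Ordo(b^{-1/2})\bigr)$, a sawtooth sweeping $[-\xi_0\varphi_0(0)^2\Phi_0,\ \xi_0\varphi_0(0)^2\Phi_0]$. Its amplitude is proportional to $\Phi_0$ itself, not to $\Phi_0-a\sqrt{\Theta_0}/\varphi_0(0)^2$. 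The threshold comes from comparing the steepest descent $\xi_0\varphi_0(0)^2\Phi_0$ with the drift $\Theta_0\abs{a}$; since $\xi_0=\sqrt{\Theta_0}$, this gives the condition $\Phi_0\gtrless\abs{a}\sqrt{\Theta_0}/\varphi_0(0)^2$.

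With your amplitude, beating the drift would require $c\,(\Phi_0-a\sqrt{\Theta_0}/\varphi_0(0)^2)>\Theta_0\abs{a}$, which is a different condition. So your argument does not produce the stated dichotomy, and "phase drift compensation" is not the mechanism.

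Nor do you need control of $\lambda_1'$ uniformly in $m$. Since $\Ab=b(\Fb_\Inner+a\Fb_\Outer)$ is linear in $b$, Step~3 of the proof of Proposition~\ref{prop:sub-generic-monotone} holds with $E=0$:
\[
  \lambda_{1,+}'(b)\geq\frac{\lambda_1(b+\eta)-\lambda_1(b)}{\eta}-\eta\norm{\Fb_\Inner+a\Fb_\Outer}_{L^\infty(\Domain)}^2,\qquad \eta>0 .
\]
Below the threshold, insert the expansion (remainder $\Ordo(b^{-1/2})$), use that $\Delta_b^2$ is $1$-Lipschitz in $\Phi_0b+\xi_0(\abs{a}b)^{1/2}$, and take $\eta$ small but fixed; this gives $\liminf\lambda_{1,+}'(b)>0$. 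Above the threshold, compare $\lambda_1(b)$ with $\lambda_1(b+\eta)$ at points where $\Delta_b=1/2$. This gives a decrease of order $\eta$, which beats the remainder. The paper deduces both directions directly from the expansion, citing \cite[Section~6]{FS}.

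Case~3 has two gaps. First, an IMS partition at scale $b^{-1/2+\delta}$ costs $\Ordo(b^{1-2\delta})$, so it only yields $\lambda_1(b)\geq b-\ordo(b)$, whereas monotonicity needs $\lambda_1(b)=b+\ordo(1)$. The Dirichlet problem on $\Inner$ gives only the upper bound. For $a>1$ the interface energy $\hat\beta_a b=b$ equals the bulk level, so Lemma~\ref{lem:criticalagmon}(3) gives decay only into $\Outer$, and nothing makes the ground state small near $\partial\Inner$.

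The missing idea is that $a>1$ forces $B\geq b$ on all of $\Domain$. Cutting the ground state off near $\partial\Domain$ costs $\Ordo(b^{-\infty})$ by Lemma~\ref{lem:criticalagmon}(3), since $\dist(\partial\Domain,\partial\Inner)>0$. For functions vanishing on $\partial\Domain$ one has $\norm{(-\ii\nabla-\Ab)v}^2\geq\int_\Domain B\abs{v}^2\geq b\norm{v}^2$ \cite[Lemma~1.4.1]{FH-b}. Hence $\lambda_1(b)=b+\Ordo(b^{-\infty})$.

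Second, the "identity trick" turning the Feynman--Hellmann integral into $b^{-1}\lambda_1(b)$ is unjustified. A virial identity of that kind needs dilation covariance of field and gauge, which the interface $\partial\Inner$ breaks exactly where the ground state may live. Also, the state is spread over $\Inner$ rather than concentrated at a point, so no gauge makes $\partial_b\Ab$ small on its support. The paper instead applies \cite[Proposition~2.2]{FH}, i.e.\ the difference-quotient inequality above combined with $\lambda_1(b)-b=\ordo(1)$. This gives $\liminf_{b\to+\infty}\lambda_{1,+}'(b)\geq1$.
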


Note that in Theorem~\ref{thm:maincritical}, we allow for $a$ to be negative. In fact, the sign of $a$ is irrelevant in the proof as long as $a>-1$.

\subsection{The supercritical regime}\label{subsec:supercritical}

We now consider a strong magnetic field that grows faster in the outer region
than in the inner region. For simplicity, we take \(\OuterB(b) = b ^ {\varrho}\),
with \(\varrho > 1\). Unlike the subcritical and critical regimes, we prove
strong diamagnetism independently of the geometry of the domain.

\begin{theorem}\label{thm:supercritical}
  If \(\OuterB(b) = b ^ {\varrho}\), where \(\varrho > 1\), then the map \(b
  \mapsto \lambda _ 1(b)\) is strongly diamagnetic.
\end{theorem}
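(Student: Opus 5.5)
The plan is to reduce $\lambda_1(b)$ to the lowest eigenvalue of the Dirichlet problem in $\Inner$ with constant field $b$, and then to use the known monotonicity of that problem. Write $\lambda_1^{D,\Inner}(b)$ for the lowest eigenvalue of $(-\ii\nabla-b\Fb)^2$ in $\Inner$ with Dirichlet conditions on $\partial\Inner$, where $\curl\Fb=1$. By gauge invariance it depends only on $b$.

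\textbf{Upper bound.} A Dirichlet ground state of $\Inner$, extended by zero to $\Domain$, lies in $H^1(\Domain)$. Only the field in $\Inner$ acts on it; after a gauge change in $\Inner$ (simply connected), $\Ab$ restricted to $\Inner$ equals $b\Fb$ plus a gradient. Hence
\[
  \lambda_1(b)\le \lambda_1^{D,\Inner}(b).
\]

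\textbf{Lower bound.} I would use an IMS partition of unity $\chi_1^2+\chi_2^2=1$. Here $\chi_1$ is supported in an $\epsilon(b)$-neighborhood of $\closure{\Inner}$, and $\chi_2$ is supported in $\Outer$ at distance at least $\epsilon(b)/2$ from $\partial\Inner$. The localization error is $\Ordo(\epsilon^{-2})$.

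On $\supp\chi_2$ the field is $b^\varrho$. The standard lower bound for a constant field, including the Neumann boundary part, gives energy at least $\Theta_0 b^\varrho(1-\ordo(1))$. This is much larger than $\lambda_1^{D,\Inner}(b)\le Cb$, using $\lambda_1^{D,\Inner}(b)=b+\Ordo(b^{1/2}\ee^{-cb})$ or simply the crude bound $\Ordo(b)$.

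The hard step is $\chi_1\psi$, which sees the strong outer field in a thin collar. I would show that the quadratic form restricted to the collar forces decay at rate $b^{\varrho/2}$. Concretely, I would prove an Agmon-type estimate: for the ground state $\psi$, $\int_\Outer \ee^{2\alpha b^{\varrho/2}\dist(x,\partial\Inner)}|\psi|^2$ is controlled. This follows because in $\Outer$ one has $\Quadraticform^\Outer[u]\ge b^\varrho\|u\|^2$ for $u$ compactly supported away from $\partial\Domain$, and $\ge\Theta_0b^\varrho(1-\ordo(1))\|u\|^2$ in general, which strictly dominates the eigenvalue. So $\psi$ is exponentially small in $\Outer$ beyond distance $b^{-\varrho/2+\delta}$.

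I would then compare with a Dirichlet problem on a slightly enlarged inner domain $\Inner_\epsilon=\{\dist(\cdot,\Inner)<\epsilon\}$, $\epsilon=b^{-\varrho/2+\delta}$, with field $b$ in $\Inner$ and $b^\varrho$ in the collar. Since the field only increases in the collar, a lower bound comparable to $\lambda_1^{D,\Inner}(b)$ up to $\Ordo(\epsilon b)$ relative errors seems plausible. A cleaner route is to cut off with a function vanishing on $\partial\Inner$: set $u=\eta\psi$, with $\eta$ equal to $1$ in $\Inner$ away from the boundary and decaying in a layer of width $\epsilon$ inside $\Inner$. This yields
\[
  \lambda_1(b)\ge\lambda_1^{D,\Inner}(b)-\ordo(1),
\]
provided the boundary mass of $\psi$ in $\Inner$ near $\partial\Inner$ is small. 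That smallness would come from the fact that Dirichlet-type confinement in $\Inner$ (first eigenvalue $\approx b$) pushes the ground state away from $\partial\Inner$, with a Gaussian profile on scale $b^{-1/2}$.

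\textbf{Monotonicity.} To pass from asymptotic equivalence to monotonicity I would not compare eigenvalues directly. Instead I would use the Hellmann--Feynman formula $\lambda_1'(b)=\partial_b\Quadraticform$ evaluated at the normalized ground state, where it exists, and one-sided derivatives otherwise. With $\Ab=b\Fb_\Inner+b^\varrho\Fb_\Outer$ this gives
\[
  \lambda_1'(b)=-2\re\int\bigl(\Fb_\Inner+\varrho b^{\varrho-1}\Fb_\Outer\bigr)\cdot\conjugate{\psi}(-\ii\nabla-\Ab)\psi .
\]
The $\Fb_\Outer$ term is localized where $\psi$ lives; by the Agmon decay it is controlled up to exponentially small errors. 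Combining this with the convergence of the ground state to the interior Dirichlet (bulk Landau) ground state, the derivative tends to the bulk value $1$. This mirrors the homogeneous-field argument in \cite[Section~8.5]{FH-b}, and yields $\lambda_1'(b)>0$ for large $b$.

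I expect the main obstacle to be this derivative step: controlling the cross term $b^{\varrho-1}\Fb_\Outer$ near $\partial\Inner$, where $\Fb_\Outer$ need not be small. I would first try to choose a gauge in $\Inner_\epsilon$ in which $\Fb_\Outer$ is $\Ordo(\epsilon)$ near $\partial\Inner$, so that the factor $b^{\varrho-1}$ is compensated by the decay of $\psi$.
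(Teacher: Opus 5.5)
Your upper bound and your Agmon decay into $\Outer$ are fine. The lower bound, however, has a genuine gap, and it sits exactly where the monotonicity step needs $\ordo(1)$ accuracy.

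Your ``cleaner route'' cuts $\psi$ off inside $\Inner$ in a layer of width $\epsilon$. This costs $\epsilon^{-2}\int_{\mathrm{layer}}\abs{\psi}^2\dd x$, so you need the mass of $\psi$ in that layer to be $\ordo(\epsilon^2)$, or at least $\ordo(1)$ for a fixed layer. Nothing in your argument provides this. Near $\partial\Inner$ the local model is the magnetic step with ratio $z=b^{\varrho-1}\geq 1$, and its energy $\hat\beta_z b=b$ coincides with the bulk Landau level in $\Inner$. There is therefore no energy gap for an Agmon argument to exploit. Any localization of $\psi$ away from $\partial\Inner$ inside $\Inner$ is an exponentially small effect that estimates built on leading-order energies cannot see, and the ``Gaussian profile on scale $b^{-1/2}$'' is a heuristic, not an estimate. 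Your first variant does not help either: comparing the Dirichlet problem on $\Inner_\epsilon$ perturbatively with $\lambda_1^{D,\Inner}(b)$ up to $\Ordo(\epsilon b)$ relative errors loses far more than $\ordo(1)$.

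The missing idea is that you never need $\lambda_1^{D,\Inner}(b)$ as a lower bound. Since $b^\varrho\geq b$, one has $B\geq b$ on all of $\Domain$. For every $u\in H^1_0(\Domain)$, \cite[Lemma~1.4.1]{FH-b} gives $\int_\Domain\abs{(-\ii\nabla-\Ab)u}^2\dd x\geq\int_\Domain B\abs{u}^2\dd x\geq b\norm{u}^2$.

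So the only cut-off needed is some $\chi\in C_c^\infty(\Domain)$ equal to $1$ on a fixed neighborhood of $\closure{\Inner}$. Your Agmon estimate makes the IMS error $\Ordo(\ee^{-cb^{\varrho/2}})$, whence $\lambda_1(b)\geq b-\Ordo(\ee^{-cb^{\varrho/2}})$. Combined with your upper bound, this gives $\lambda_1(b)=b+\Ordo(b^{-\infty})$. This is the paper's argument, and it is also the real reason your enlarged-domain idea works, with no comparison error at all.

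The same inequality handles the cross term you flagged. Applied to $\chi\psi$ it gives $(b^\varrho-b)\norm{\chi\psi}^2_{L^2(\Outer)}\leq\abs{\lambda_1(b)-b}+\Ordo(\ee^{-cb^{\varrho/2}})=\Ordo(b^{-\infty})$. Agmon decay alone is not enough here. Even in a gauge where $\Fb_\Outer$ vanishes in $\Inner$ and is $\Ordo(\dist(x,\partial\Inner))$ in $\Outer$, it only yields $\norm{\Fb_\Outer\psi}=\Ordo(b^{-\varrho/2})$. The bound on $b^{\varrho-1}\abs{\innerproduct{\Fb_\Outer\psi,(-\ii\nabla-\Ab)\psi}}$ is then only $\Ordo(b^{\varrho-1}b^{-\varrho/2}b^{1/2})=\Ordo(b^{(\varrho-1)/2})$, which diverges.

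Finally, the paper does not evaluate the Hellmann--Feynman expression through ``convergence of the ground state''; ground states concentrate on scale $b^{-1/2}$ and have no limit. Instead it invokes \cite[Proposition~2.2]{FH}. The mechanism there is the variational inequality $\lambda_1(b+\epsilon)\leq\norm{(-\ii\nabla-\Ab_{b+\epsilon})\psi}^2=\lambda_1(b)+\epsilon\lambda'_{1,+}(b)+(\text{second-order terms})$, where $\Ab_{b+\epsilon}$ is the potential at parameter $b+\epsilon$ and $\psi$ is a suitable ground state. This bounds $\lambda'_{1,+}(b)$ from below by difference quotients of $b+g(b)$ with $g(b)=\ordo(1)$. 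The nonlinearity of $b\mapsto b^\varrho$ enters only the second-order terms, and the smallness of $\norm{\psi}_{L^2(\Outer)}$ controls them.
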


\section{The perfect oscillatory regime}\label{sec:oscillatory}

\subsection{Introduction}

In this section, we prove Theorem~\ref{thm:mainoscillatory}. Along the way, we
also establish several auxiliary results of independent interest, including
extensions to higher eigenvalues and concentration properties of the
eigenfunctions.

The main steps of the proof consist in showing that, for large \(b\), the first
eigenvalue \(\lambda_1(b)\) is well approximated by \(\lambda_1^{\Outer}(b)\). We
then invoke a result from~\cite{HHOO} (stated below) describing the oscillatory
behavior of the latter.

Throughout this section, we work under the setting of
Theorem~\ref{thm:mainoscillatory}, namely we assume that the magnetic field is
given by
\begin{equation}\label{eq: mag field thm3.1}
  B(x)
  =
  \begin{cases}
    b      & x \in \Inner,\\
    a      & x \in \Outer,
  \end{cases}
\end{equation}
with \(a \in \mathbb{R}\) fixed.

\subsection{Periodicity and oscillations of eigenvalues for the outer domain}

The oscillations of \(\lambda _ 1 ^ {\Outer}(b)\) arise from a
periodicity property of \(\Hamiltonian ^ {\Outer}(b)\). Indeed, the parameter
\(b\)  determines the magnetic flux through the inner domain \(\Inner\), and integer values of the flux can be removed by a unitary gauge transformation. Since the flux through the
interior domain is given by \(b\measure{\Inner}/2\pi\), it follows that the eigenvalues \(\lambda _ n  ^ {\Outer}(b)\) are periodic in $b$ with period \(2\pi/\measure {\Inner}\). This is
a well-known result.

\begin{lemma}[{\cite[Theorem 1.1]{HHOO}}]\label{lem:HHOO}
  The operators \(\Hamiltonian ^ {\Outer}(b)\) and \(\Hamiltonian ^ {\Outer}(b -
  2\pi/\measure{\Inner})\) are unitarily equivalent. In particular, all eigenvalues
  \(\lambda _ n ^ {\Outer}(b)\) are \(2\pi/\measure{\Inner}\)-periodic in \(b\).

  Furthermore, all eigenvalues are continuous with respect to \(b\). Restricting \(b\) to \([0,2\pi/\measure{\Inner}]\), the first eigenvalue \(\lambda _
  1 ^ {\Outer}(b)\) attains its minimum at the end-points of the interval, and
  its (strict) maximum at the midpoint.
\end{lemma}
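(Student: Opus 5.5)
The plan is to treat the periodicity as a pure gauge statement on the doubly connected domain \(\Outer\). Then continuity follows from standard perturbation theory, and the extremal statement comes from the argument of~\cite{HHOO}, which I carry out in full only for \(a=0\).

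\emph{Periodicity.} On \(\Outer\) the potential is \(\Ab = b\Fb_{\Inner} + a\Fb_{\Outer}\), and by~\eqref{eq:prop-A} we have \(\curl \Fb_{\Inner} = 0\) in \(\Outer\). So \(\Fb_{\Inner}|_{\Outer}\) is a closed form.
\begin{enumerate}
\item By Stokes' theorem applied in \(\Inner\), its circulation along any loop winding once around \(\Inner\) equals \(\int_{\Inner}\curl\Fb_{\Inner}\dd x = \measure{\Inner}\).
\item Set \(\beta = 2\pi/\measure{\Inner}\). Then \(\beta\Fb_{\Inner}\) is closed on \(\Outer\) with all periods in \(2\pi\integers\).
\item Hence there is a smooth map \(u = \ee^{\ii\theta}\colon \closure{\Outer}\to S^1\), with multivalued \(\theta\), such that \(\beta\Fb_{\Inner} = -\ii\,\conjugate{u}\,\nabla u = \nabla\theta\).
\item Multiplication by \(u\) is unitary on \(L^2(\Outer)\). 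It preserves \(H^1(\Outer)\) and the vanishing trace on \(\partial\Inner\), so it preserves the form domain.
\item It satisfies \((-\ii\nabla - \Ab)(u\psi) = u\,(-\ii\nabla - \Ab + \nabla\theta)\psi\). Therefore \(\Quadraticform^{\Outer}_{b}[u\psi] = \Quadraticform^{\Outer}_{b-\beta}[\psi]\).
\end{enumerate}
This gives the unitary equivalence of \(\Hamiltonian^{\Outer}(b)\) and \(\Hamiltonian^{\Outer}(b-\beta)\), and hence the \(\beta\)-periodicity of every \(\lambda_n^{\Outer}\).

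\emph{Continuity.} The form \(\Quadraticform^{\Outer}_b\) has a fixed domain and depends quadratically on \(b\) through the bounded potential \(b\Fb_{\Inner}\). Thus \(\{\Hamiltonian^{\Outer}(b)\}\) is a holomorphic family of type (B). Alternatively, the elementary estimate \(\abs{\Quadraticform^{\Outer}_b[\psi]-\Quadraticform^{\Outer}_{b'}[\psi]} \le \epsilon\,\Quadraticform^{\Outer}_b[\psi] + C_\epsilon\abs{b-b'}^2\norm{\psi}^2\), inserted in the min-max formula, gives continuity of each \(\lambda_n^{\Outer}\). Analyticity also yields real-analytic eigenvalue branches and the Feynman–Hellmann formula for \(\lambda_1^{\Outer}\) wherever it is simple.

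\emph{Extremal structure.} I would argue as in~\cite{HHOO}, first for \(a=0\).
\begin{enumerate}
\item The diamagnetic inequality gives \(\lambda_1^{\Outer}(b)\ge\lambda_1^{\Outer}(0)\).
\item If equality holds, then \(\abs{\psi}\) is a positive ground state of the non-magnetic problem. Equality in \(\abs{\nabla\abs{\psi}}\le\abs{(-\ii\nabla-\Ab)\psi}\) forces \(\psi = \abs{\psi}\ee^{\ii\chi}\) with \(\nabla\chi = \Ab\). This is possible only if the circulation \(b\measure{\Inner}\in 2\pi\integers\).
\item So the minimum is attained exactly at the endpoints of \([0,\beta]\).
\item Complex conjugation maps \(\Hamiltonian^{\Outer}(b)\) to \(\Hamiltonian^{\Outer}(-b)\). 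Combined with periodicity, this gives symmetry of \(\lambda_1^{\Outer}\) about \(\beta/2\).
\item The strict maximum at \(\beta/2\) then follows from strict monotonicity on \((0,\beta/2)\).
\end{enumerate}
For the monotonicity I would follow HHOO: cut \(\Outer\) along a curve \(\gamma\) from \(\partial\Inner\) to \(\partial\Domain\). After a gauge transformation, the problem is equivalent to the Laplacian on the slit domain with the twisted condition \(\psi^+ = \ee^{\ii b\measure{\Inner}}\psi^-\) across \(\gamma\). One then compares the twisted ground-state energies, for instance by Feynman–Hellmann together with a sign argument for the boundary flux term.

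\emph{Main obstacle.} For \(a\ne 0\) the background field on \(\Outer\) breaks two steps of the argument. The equality case of the diamagnetic inequality no longer applies, and conjugation sends \(a\) to \(-a\), so the symmetry about \(\beta/2\) is lost. I would first check whether~\cite[Theorem~1.1]{HHOO} covers a nonzero background field. If it does not, the first two parts of the lemma (periodicity and continuity) still hold verbatim and suffice for Theorem~\ref{thm:mainoscillatory}: there one only needs \(\lambda_1^{\Outer}\) to be continuous, periodic and non-constant. Non-constancy I would try to obtain from analyticity plus a second-order perturbation computation in \(b\) at a minimum point.
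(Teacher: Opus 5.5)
\textbf{Verdict: there is a genuine gap.} The strict maximum at the midpoint is not proved, even for $a=0$. Your reservation about $a\neq0$ is correct and can be made definite.

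Your periodicity step is the paper's own argument: your multiplier $u$ is the operator $\Unitary$ of~\eqref{eq:unitary}. Continuity via a type~(B) family is fine. For $a=0$, the minimum at the endpoints (diamagnetic inequality with its equality case) and the symmetry about $\beta/2$ are also correct.

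The gap is the strict maximum at $\beta/2$. You reduce it to strict monotonicity of $\lambda_1^{\Outer}$ on $(0,\beta/2)$. That is stronger than what is claimed, and your sketch gives no mechanism for it. In the slit picture, the Feynman--Hellmann derivative in the twist is the persistent current through the cut, and nothing in your argument controls its sign.

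The missing idea is the nodal-set argument behind~\cite{HHOO}, which needs no monotonicity:
\begin{enumerate}
\item At $b=\beta/2$ the operator commutes with the antiunitary involution $\psi\mapsto\Unitary\conjugate{\psi}$. So a ground state can be chosen with $\psi=\Unitary\conjugate{\psi}$.
\item Write $\Unitary=\ee^{\ii\theta}$. Then $\ee^{-\ii\theta/2}\psi$ is locally real and changes sign along every loop winding once around $\Inner$. Hence the nodal set $N$ of $\psi$ meets every such loop, and $\Fb_{\Inner}$ is exact on each component of $\Outer\setminus N$.
\item Multiply $\psi$ by suitable phases on these components; this is allowed because $\psi=0$ on $N$. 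For every $b$ this gives an admissible trial state for $\Hamiltonian^{\Outer}(b)$ with energy $\lambda_1^{\Outer}(\beta/2)$. Hence $\lambda_1^{\Outer}(b)\le\lambda_1^{\Outer}(\beta/2)$.
\item Strictness follows by unique continuation. If equality held with $b\notin\beta/2+\beta\integers$, the trial state would be an eigenfunction. The phase mismatch across some nodal arc would then force $\partial_\nu\psi=0$ there, in addition to $\psi=0$.
\end{enumerate}

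Your caveat for $a\neq0$ is justified: \cite{HHOO} concerns zero field in $\Outer$, and for $a\neq0$ the last sentence of the lemma fails in general. Take $\Domain$ the unit disk, $\Inner=\set{\abs{x}<r_1}$ and $a\neq0$ small. Then $\Fb_{\Inner}$ and $\Fb_{\Outer}$ are azimuthal in $\Outer$. The ground state of $\Hamiltonian^{\Outer}(0)$ is simple and of the form $f(\abs{x})$ with $f$ real. Feynman--Hellmann gives
\[
  (\lambda_1^{\Outer})'(0)
  = 2a\int_{\Outer}(\Fb_{\Inner}\cdot\Fb_{\Outer})\,f^2\dd x
  = \frac{a r_1^2}{2}\int_{\Outer}\frac{\abs{x}^2-r_1^2}{\abs{x}^2}\,f^2\dd x
  \neq 0 .
\]
By periodicity, $\lambda_1^{\Outer}$ therefore drops below $\lambda_1^{\Outer}(0)$ just inside one end of $[0,2\pi/\measure{\Inner}]$.

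So for general $a$ only periodicity and continuity survive. That is also all the paper itself establishes: it constructs $\Unitary$ and otherwise cites~\cite{HHOO}.

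For Theorem~\ref{thm:mainoscillatory} with $a\neq0$, non-constancy of $\lambda_1^{\Outer}$ must be proved separately. Equivalently, one must exclude a flat first band for the lifted periodic operator on the universal cover of $\Outer$. Your ``second-order computation at a minimum point'' does not yet do this. If $\lambda_1^{\Outer}$ were constant, every point would be a minimum with vanishing second derivative. Such a computation would have to produce a nonzero quantity somewhere, and you have not said how.
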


Since we will use the unitary mapping below, we elaborate a bit on it. Given any
point \(x _ * \in \Outer\), we define \(\Unitary \colon L ^ 2(\Outer) \to L ^
2(\Outer)\) as the multiplication operator
\begin{equation}\label{eq:unitary}
  (\Unitary\psi)(x)
  =
  \exp
    \left(
      \frac {2\pi \ii}{\measure{\Inner}}
      \int _ {\ell(x _ *,x)} \Fb _ {\Inner} \cdot \dd r
    \right)
    \psi(x).
\end{equation}
Here \(\ell(x _ *,x)\) is any regular path in \(\Outer\) connecting \(x _ *\) and
\(x\).

\begin{remark}[Path-independence of $\mathcal{U}$]
  The definition of $\mathcal{U}$ is independent of the chosen path. Indeed,
  given two curves, one can concatenate the first with the reverse of the second
  to form a loop \(\gamma \subset \Outer\).

  Since \(\Outer\) contains the hole \(\Inner\), loops in \(\Outer\) are
  classified by their winding number around \(\Inner\). If the winding number is
  \(n\), then by Stokes' theorem (recalling that \(\curl \Fb_{\Inner} =
  \mathbf{1}_{\Inner}\)) we have
  \[
    \int_\gamma \Fb_{\Inner} \cdot \dd r
    = n \int_{\Inner} \curl \Fb_{\Inner} \dd x
    = n\measure{\Inner}.
  \]
  It follows that
  \[
    \exp\Biggl(\frac{2\pi \ii}{|\Inner|} \int_\gamma \Fb_{\Inner} \cdot \dd r \Biggr) = 1,
  \]
  and hence \(\mathcal{U}\) is well-defined.
\end{remark}

A simple calculation shows that
\[
  \exp\left(-\frac {2\pi \ii}{\measure{\Inner}}\int _ {\ell(x _ *,x)} \Fb _ {\Inner} \cdot \dd r\right)
  \nabla\exp\left(\frac {2\pi \ii}{\measure{\Inner}}\int _ {\ell(x _ *,x)} \Fb _ {\Inner} \cdot \dd r\right)
  =
  \frac{2\pi\ii}{\measure{\Inner}}\Fb _ {\Inner},
\]
and hence it follows that
\[
  \Unitary ^ {-1} (-\ii\nabla - b\Fb _ {\Inner} - a\Fb _ {\Outer})\Unitary
  =
  -\ii\nabla - (b - 2\pi/\measure{\Inner})\Fb _ {\Inner} - a\Fb _ {\Outer}.
\]
Since \(\Fb_{\Inner}\) and \(\Fb_{\Outer}\) are both orthogonal to the normal
vector field on \(\partial\Domain\), the unitary mapping preserves the operator
domains.

More generally, for any integer $p$, we have
\begin{equation}\label{eq:unitarycalc}
  \Unitary ^ {-p} (-\ii\nabla - b\Fb _ {\Inner} - a\Fb _ {\Outer})\Unitary ^ {p}
  =
  -\ii\nabla - (b - 2\pi p/\measure{\Inner})\Fb _ {\Inner} - a\Fb _ {\Outer}.
\end{equation}
Choosing \(p = \integerpart{b\measure{\Inner}/2\pi}\) maps \(b\) into the
interval \([0,2\pi/\measure{\Inner})\), effectively reducing \(b\) modulo the
period of the eigenvalues. We henceforth call the interval
\([0,2\pi/\measure{\Inner})\) the \emph {base interval}.

\subsection{Asymptotic equivalence and proof of Theorem~\ref{thm:mainoscillatory}}

Our next result says that as \(b \to +\infty\) each eigenvalue of
\(\Hamiltonian(b)\) becomes arbitrarily close to the corresponding eigenvalue of
\(\Hamiltonian ^ {\Outer}(b)\).

\begin{proposition}\label{prop:asyequi}
  Let \(a \in \reals\) and \(n \in \naturalnumbers\) be fixed. Let the magnetic
  field be as in \eqref{eq: mag field thm3.1}. As \(b \to +\infty\), it holds
  that
  \begin{equation}
    \lambda _ n (b) = \lambda _ n ^ {\Outer} (b) + o(1).
  \end{equation}
\end{proposition}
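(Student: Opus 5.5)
The proof has two halves: an upper bound $\lambda_n(b)\le\lambda_n^{\Outer}(b)+o(1)$ and the matching lower bound. Both use the periodicity of Lemma~\ref{lem:HHOO} together with the gauge transformation \eqref{eq:unitarycalc}. We may assume throughout that $b$ lies in a shifted base interval, so that the operators $\Hamiltonian^{\Outer}(b)$ range over a compact family.

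\emph{Upper bound.} Take eigenfunctions of $\Hamiltonian^{\Outer}(b)$. They satisfy Dirichlet conditions on $\partial\Inner$, so their extensions by zero lie in $H^1(\Domain)$, and on $\Outer$ the quadratic forms $\Quadraticform$ and $\Quadraticform^{\Outer}$ coincide because the vector potential is the same. The min-max principle \eqref{eq:def-lambda-n} then gives $\lambda_n(b)\le\lambda_n^{\Outer}(b)$ exactly, with no error term.

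\emph{Lower bound.} First, $\lambda_n^{\Outer}(b)$ is bounded uniformly in $b$, by periodicity and continuity. Hence $\lambda_n(b)$ is bounded as well. Fix $n$ and let $\psi_1,\dots,\psi_n$ be orthonormal eigenfunctions of $\Hamiltonian(b)$, with energies bounded by some $C$.

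The key estimate is that the mass in $\Inner$ vanishes: $\norm{\psi_j}_{L^2(\Inner)}\to0$. This follows from the bulk lower bound $\int_{\Inner}\abs{(-\ii\nabla-\Ab)\psi}^2\dd x\ge b\int_{\Inner}\abs{\psi}^2\dd x - Cb^{1/2}(\ldots)$. The cleanest route is the localized inequality $\int\abs{(-\ii\nabla-\Ab)(\chi\psi)}^2\ge b\int\abs{\chi\psi}^2$ for a cutoff $\chi$ supported in $\Inner$. Near $\partial\Inner$ I would instead use a partition of unity $\chi_1^2+\chi_2^2=1$ at scale $b^{-1/2}$ around $\partial\Inner$, with an IMS localization error $\Ordo(b)$ on a layer of width $b^{-1/2}$. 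A simpler alternative, which I prefer, is to apply the global bound $\int_{\Domain}\abs{(-\ii\nabla-\Ab)\psi}^2\ge\int B\abs{\psi}^2$ (valid for compactly supported functions, sign issues aside) to $\chi\psi$, where $\chi\equiv1$ on $\Inner$, $\chi$ is supported in $\Inner_\delta$, and $\delta$ is fixed. Since $B\ge b$ on $\Inner$ and $B=a$ elsewhere, the energy bound and the fact that $\abs{\nabla\chi}$ is independent of $b$ give $b\norm{\psi}^2_{L^2(\Inner)}\le C_\delta$, so the inner mass is $\Ordo(b^{-1})$.

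Next, take a cutoff $\eta$ vanishing on $\Inner\cup\{\dist(x,\Inner)<\epsilon\}$ and equal to $1$ away from $\Inner_{2\epsilon}$. The functions $\eta\psi_j$ are admissible for $\Hamiltonian^{\Outer}(b)$, and by IMS,
\[
\Quadraticform^{\Outer}[\eta\psi]\le\Quadraticform[\psi]+\norm{\nabla\eta}_\infty^2\norm{\psi}^2_{L^2(\Inner_{2\epsilon}\setminus\Inner)}.
\]
The obstacle is the thin shell $\Inner_{2\epsilon}\setminus\Inner$: there the field is only $a$, so the mass of $\psi$ there is not small from energy considerations alone, while $\norm{\nabla\eta}^2\sim\epsilon^{-2}$. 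To handle it I would prove a uniform-in-$b$ H\"older/trace-type control. Gauge away $b\Fb_{\Inner}$ on $\Outer$ via \eqref{eq:unitarycalc}, up to a bounded base-interval remainder, so that $\psi$ becomes uniformly bounded in $H^1(\Outer)$. Its trace on $\partial\Inner$ is then controlled by the inner decay. Concretely, the diamagnetic inequality gives $\abs{\psi}$ bounded in $H^1(\Domain)$ uniformly in $b$, with $\norm{\abs{\psi}}_{L^2(\Inner)}\to0$. By compactness, any limit of $\abs{\psi}$ vanishes on $\Inner$, so the shell mass tends to $0$ as $b\to\infty$ followed by $\epsilon\to0$. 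More precisely, a Hardy/Poincar\'e inequality in the shell bounds the shell mass by $C\epsilon^2\norm{\nabla\abs{\psi}}^2+C\epsilon\norm{\abs{\psi}}^2_{H^1(\Inner\text{-side})}$, which makes the error $\epsilon^{-2}\cdot o_\epsilon(1)$ small after a suitable choice of $\epsilon$; I expect this balancing to be the main technical step.

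Finally, the $\eta\psi_j$ are almost orthonormal, so min-max gives $\lambda_n^{\Outer}(b)\le\lambda_n(b)+o(1)$. An alternative that avoids the shell is to use $\eta=1$ on $\Outer$ and compare with a Dirichlet problem on $\Outer$ after subtracting the boundary trace; I would keep the shell argument as the primary route.
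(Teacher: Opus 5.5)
Your upper bound and your inner-mass estimate are fine. Applying $\norm{(-\ii\nabla-\Ab)v}^2\geq\int B\abs{v}^2$ to $v=\chi\psi$, with $\chi$ compactly supported near $\closure{\Inner}$, gives $b\norm{\psi}^2_{L^2(\Inner)}\leq C$, which is a more elementary substitute for the paper's comparison with the Neumann problem in $\Inner$. The gap is in the step you flag as the main one. With $t=\dist(x,\partial\Inner)$, the IMS error is of order $\epsilon^{-2}\int_{\{\epsilon<t<2\epsilon\}}\abs{\psi}^2\dd x$, and your shell bound turns it into $C\epsilon^{-1}b^{-1/2}+C\int_{\{0<t<2\epsilon\}}\abs{\nabla\abs{\psi}}^2\dd x$. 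The last term is only known to be bounded. The a priori information you use (an $H^1$ bound on $\abs{\psi}$ and inner mass $\Ordo(b^{-1})$) does not exclude, for example, a layer of width $b^{-1/2}$ and amplitude $b^{-1/4}$ on the $\Outer$ side. Such a layer puts gradient mass of order one into every collar $\{t<2\epsilon\}$ with $\epsilon\gg b^{-1/2}$. So no choice of $\epsilon$, fixed or $b$-dependent, makes your bound $o(1)$. Your compactness remark does not help as stated either: it gives shell mass $o(1)$, whereas you need $o(\epsilon^2)$.

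The missing idea is to turn ``the limit has zero trace on $\partial\Inner$'' into ``shell mass $=o(\epsilon^2)$'' via Hardy's inequality, taking the limits in the right order. Argue along $b_k\to+\infty$ and extract $\abs{\psi^{(j)}_{b_k}}\to f_j$ in $L^2(\Domain)$. Then $f_j\in H^1(\Domain)$ vanishes in $\Inner$, so it has zero trace on $\partial\Inner$, and the one-dimensional Hardy inequality in the normal variable gives $t^{-2}\abs{f_j}^2\in L^1$ near $\partial\Inner$. For fixed $\epsilon$, the $\limsup$ of the error as $k\to\infty$ is at most $C\epsilon^{-2}\int_{\{t<2\epsilon\}}\abs{f_j}^2\leq 4C\int_{\{t<2\epsilon\}}t^{-2}\abs{f_j}^2$, and this vanishes as $\epsilon\to0$. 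Alternatively, a logarithmic cutoff with $\abs{\nabla\eta}\leq\delta/t$ on $\{t_2\ee^{-1/\delta}<t<t_2\}$, combined with Hardy and the trace bound $\norm{\abs{\psi}}^2_{L^2(\partial\Inner)}\lesssim b^{-1/2}$, works uniformly in $b$. Once repaired, your route genuinely differs from the paper's. The paper uses no cutoff: it reduces $b$ to the base interval and takes weak $H^1(\Outer)$-limits of the gauge-transformed eigenfunctions $\Unitary^{-p_k}u^{(j)}_k$. It then shows with the multiplicative trace inequality that these limits lie in the Dirichlet form domain, and concludes by weak lower semicontinuity and continuity of $\lambda_n^{\Outer}$ in $b$. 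Your version compares $\lambda_n(b)$ and $\lambda_n^{\Outer}(b)$ at the same $b$ using only $\abs{\psi}$. So, apart from the a priori bound on $\lambda_n^{\Outer}$, it needs neither the gauge transformation nor a limiting operator.
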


The proof of Theorem~\ref{thm:mainoscillatory} is now immediate.

\begin{proof}[Proof of Theorem~\ref{thm:mainoscillatory}]
  The periodicity from Lemma~\ref{lem:HHOO} and the asymptotic equivalence in
  Proposition~\ref{prop:asyequi}, applied for \(n = 1\), directly imply that
  \(\lambda _ 1 (b)\) is not monotone on any unbounded interval of \(\reals _
  +\).
\end{proof}

We now prove Proposition~\ref{prop:asyequi}. By extending functions in the form
domain of \(\Hamiltonian^{\Outer}(b)\) by zero in \(\Inner\), it follows
immediately from the comparison of quadratic forms that \(\lambda_n(b) \leq
\lambda_n^{\Outer}(b)\).

Moreover, since the eigenvalues of \(\Hamiltonian^{\Outer}(b)\) are continuous
and periodic in \(b\), they are uniformly bounded with respect to \(b\). We
therefore conclude that, for each \(n\), there exists a constant \(\Lambda_n\)
such that, for all \(b\),
\begin{equation}\label{eq:simplebounds}
  \lambda_n(b) \leq \lambda_n^{\Outer}(b) \leq \Lambda_n.
\end{equation}
We need to work a bit more in order to obtain an inequality in the opposite
direction. We need a few properties of the eigenfunctions.

\begin{notation}\label{notation}
  For each \(b\) and each \(n \in \naturalnumbers\), we denote by \(u ^ {(n)}\)
  the \(n\)-th normalized eigenfunction of \(\Hamiltonian(b)\), and we set \(w ^
  {(n)}(x) = \Unitary ^ {-p} u ^ {(n)}(x)\), for \(x \in \Outer\) and \(p =
  \integerpart {b\measure{\Inner}/2\pi}\). Also, let \(\varepsilon _ 0 =
  \dist(\partial\Inner,\partial\Domain)\). For \(0 < \varepsilon < \varepsilon _
  {0}\) we introduce the sets \(\Outer _ {\varepsilon} = \set {x \in
  \Outer~:~\dist(x,\partial\Inner) > \varepsilon}\).
\end{notation}

In the next lemma, we prove that the eigenfunctions $u^{(n)}$ are essentially
localized in the outer domain \(\Outer\).

\begin{lemma}\label{lem:concentration}
  Let \(n \in \naturalnumbers\). Let \(u ^ {(n)}\) and \(w ^ {(n)}\) be as in
  Notation \ref{notation}. There exists \(b _ 0>0\) such that if \(b > b _ 0\),
  then
  \[
    \norm {\abs {u^{(n)}}} _ {H ^ 1(\Domain;\reals)} \leq C_n,
    \quad
    \text {and}
    \quad
    \norm {u^{(n)}} _ {L ^ 2(\Inner)} ^ 2 \leq C_n/b,
    \quad
    \text {and}
    \quad
    \norm {w^{(n)}} _ {L ^ 2(\Outer)} ^ 2 \geq 1 - C_n/b,
  \]
  where $C_n>0$ depends on $n$.
\end{lemma}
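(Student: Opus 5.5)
The plan is to use the uniform eigenvalue bound \eqref{eq:simplebounds}, $\lambda_n(b)\le\Lambda_n$, together with two standard tools: the diamagnetic inequality and the lower bound $\int|(-\ii\nabla-\Ab)\psi|^2\ge \pm\int \curl\Ab\,|\psi|^2$ for compactly supported $\psi$. Throughout, $u=u^{(n)}$ is normalized. Since $\Hamiltonian(b)u=\lambda_n(b)u$, we have $\Quadraticform[u]=\lambda_n(b)\le\Lambda_n$.

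\textbf{Step 1: the $H^1$ bound.} The diamagnetic inequality $\abs{\nabla\abs{u}}\le\abs{(-\ii\nabla-\Ab)u}$ a.e.\ gives
\[
\norm{\nabla\abs{u}}_{L^2(\Domain)}^2\le \Quadraticform[u]\le\Lambda_n .
\]
Combined with $\norm{u}_{L^2}=1$, this yields $\norm{\abs{u}}_{H^1(\Domain;\reals)}^2\le 1+\Lambda_n$.

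\textbf{Step 2: the bound on $\Inner$.} Choose a cutoff $\chi\in C_c^\infty(\Domain)$ with $0\le\chi\le1$ and $\chi=1$ on $\closure{\Inner}$; its support lies in a fixed neighbourhood of $\closure\Inner$ inside $\Domain$. For compactly supported $\psi=\chi u$, integrating by parts with $\abs{(-\ii\nabla-\Ab)\psi}^2=\abs{(D_1+\ii D_2)\psi}^2+\int B\abs{\psi}^2$ gives
\[
\int_\Domain \abs{(-\ii\nabla-\Ab)(\chi u)}^2\dd x\;\ge\;\int_\Domain B\,\chi^2\abs{u}^2\dd x \;=\; b\int_\Inner\abs{u}^2\dd x + a\int_\Outer \chi^2\abs{u}^2\dd x .
\]
This is the step needing the most care: the commutator identity requires care because $B$ is only piecewise constant, so I would justify it by approximating $\Ab$, or by using $\chi u\in H^1_0$ and density. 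The right-hand side is at least $b\norm{u}_{L^2(\Inner)}^2-\abs{a}$.

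For the left-hand side, the IMS localization formula (or simply expanding the square) gives
\[
\int\abs{(-\ii\nabla-\Ab)(\chi u)}^2\le 2\Quadraticform[u]+2\norm{\nabla\chi}_\infty^2\le 2\Lambda_n+C .
\]
Hence $b\norm{u}_{L^2(\Inner)}^2\le C_n$, which is the second claim.

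\textbf{Step 3: the bound on $\Outer$.} The operator $\Unitary^{-p}$ is multiplication by a unimodular function, so $\abs{w^{(n)}}=\abs{u}$ pointwise in $\Outer$. Therefore
\[
\norm{w^{(n)}}_{L^2(\Outer)}^2=1-\norm{u}_{L^2(\Inner)}^2\ge 1-C_n/b,
\]
using that $\partial\Inner$ has measure zero. Taking $b_0$ large, for instance so that $b_0>C_n$, makes the statement meaningful. The constants depend on $n$ only through $\Lambda_n$, and on the fixed $a$, $\chi$ and the domains.
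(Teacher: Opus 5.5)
Your proof is correct. Steps 1 and 3 match the paper; your Step 1 also keeps the $L^2$ part of the $H^1$ norm, giving $1+\Lambda_n$, which the paper's display drops. Step 2, however, takes a genuinely different route.

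The paper stays inside $\Inner$. It removes the bounded term $a\Fb_{\Outer}$ by Cauchy--Schwarz, then bounds $\norm{(-\ii\nabla-\Ab)u}_{L^2(\Inner)}^2$ from below by the lowest eigenvalue of the Neumann magnetic Laplacian on $\Inner$. It then uses the semiclassical asymptotics $\lambda_1\sim\Theta_0 b$ for that operator to get $\lambda_1^{\Inner}(b)\geq cb$ once $b$ is large.

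You instead use that $\closure{\Inner}$ is compactly contained in $\Domain$. Then $\chi u\in H^1_0(\Domain)$, and the elementary commutator bound $\int\abs{(-\ii\nabla-\Ab)\psi}^2\geq\int B\abs{\psi}^2$ applies with no boundary terms. Your density justification is fine, since $\Ab\in W^{1,p}$ for all $p<\infty$; the paper itself uses this bound for the same piecewise constant field in the critical regime. You obtain $b\norm{u}_{L^2(\Inner)}^2\leq 2\Lambda_n+2\norm{\nabla\chi}_\infty^2+\abs{a}$ for every $b>0$. This is more elementary (no de Gennes constant, no boundary localization), fully explicit, and needs no threshold $b_0$.

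What the paper's version buys is robustness. Its lower bound involves only $u|_{\Inner}$ and the Neumann problem on $\Inner$, so it would survive if $\Inner$ reached $\partial\Domain$. In that case no cutoff in $H^1_0(\Domain)$ equal to $1$ on $\Inner$ exists, and the bound by $\int B\abs{\psi}^2$ is no longer available. Your argument needs a collar of $\Outer$ around $\Inner$ on which $B$ is bounded below, which holds here because $a$ is fixed.
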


\begin{proof}
  Fix $n\in\mathbb{N}$. For simplicity, we use the notation $u= u^{(n)}$ and $w =
  w^{(n)}$. By the diamagnetic inequality and~\eqref{eq:simplebounds},
  \[
    \norm {\abs {u}} _ {H ^ 1(\Domain;\reals)} ^ 2
    \leq
    \norm {(-\ii\nabla - \Ab)u} _ {L ^ 2(\Domain)} ^ 2
    =
    \lambda _ n (b)
    \leq
    \Lambda _ n,
  \]
  so \(C_n = \Lambda _ n\) works for the first inequality. For the second
  inequality, let \(\lambda _ n ^ {\Inner}(b)\) denote the \(n\)th eigenvalue
  of \(\Hamiltonian ^ {\Inner}(b) = (-\ii\nabla - \Ab) ^ 2\) in \(L ^
  2(\Inner)\), with magnetic Neumann boundary conditions on \(\partial\Inner\).
  Then, by the triangle inequality and the Cauchy--Schwarz inequality (for all
  \(v\) in the form domain),
  \[
      \norm {(-\ii\nabla - \Ab)v} _ {L ^ 2(\Inner)} ^ 2
      \geq
      \frac{1}{2}\norm {(-\ii\nabla - b\Fb _ {\Inner})v} _ {L ^ 2(\Inner)} ^ 2
      -
      \norm {a \Fb _ {\Outer}} _ {L ^ {\infty}(\Inner)} ^ 2
      \norm {v} _ {L ^ 2(\Inner)} ^ 2.
  \]
  The first term, modulo the constant \(1/2\), is the quadratic form
  corresponding to the magnetic operator with a constant magnetic field in
  \(\Inner\). According to~\cite[Theorem~8.11]{FH-b}, as \(b \to +\infty\) its
  lowest eigenvalue, divided by \(b\), tends to \(\Theta _ 0/2 > 1/4\). Since
  the second term is bounded, we get for large \(b\) that \(\lambda _ 1 ^
  {\Inner}(b)/b\) is bounded below by some constant. Since \(\lambda _ n ^
  {\Inner}(b) \geq \lambda _ 1 ^ {\Inner}(b)\),
  \[
      \lambda _ 1 ^ {\Inner}(b)  \norm {u} _ {L ^ 2(\Inner)} ^ 2
      \leq
      \norm {(-\ii\nabla - \Ab)u} _ {L ^ 2(\Inner)} ^ 2
      \leq
      \norm {(-\ii\nabla - \Ab)u} _ {L ^ 2(\Domain)} ^ 2
      =
      \lambda _ n(b)
      \leq \Lambda _ n.
  \]
  Thus, there exist \(b _ 0\) and \(C > 0\) such that \(\norm {u} _ {L ^
  2(\Inner)} ^ 2 \leq C\Lambda_n/b\) if \(b > b _ 0\). The same constant works for the
  third inequality since it is only a different formulation of the second.
  Indeed, \(u\) is normalized and \(w\) differs only from \(u\) in \(\Outer\)
  by a multiplicative factor of absolute value one.
\end{proof}

We are now ready to prove Proposition~\ref{prop:asyequi}.

\begin{proof}[Proof of Proposition~\ref{prop:asyequi}]
  We divide the proof into several steps. The main idea is to reduce  \(b\)
  modulo the period, extract limits of the gauge-transformed eigenfunctions on
  \(\Outer\), show that the limits satisfy the Dirichlet condition on
  \(\partial\Inner\), and then apply the min-max principle on the limiting
  \(n\)-dimensional space.

  Fix \(n \in \naturalnumbers\). From~\eqref{eq:simplebounds} we have
  \(\lambda_n(b)\leq \lambda_n^{\Outer}(b)\) for all \(b\). Hence, it suffices to
  show that
  \begin{equation}\label{eq:goal-liminf}
    \liminf _ {b\to +\infty}
    \bigl(
      \lambda_n(b)-\lambda ^ {\Outer} _ n(b)\bigr) \geq 0.
  \end{equation}
  Choose a sequence \(b_k \to +\infty\) such that
  \[
    \lim_{k \to +\infty}\bigl(\lambda _ n(b _ k) - \lambda _ n ^ {\Outer} (b_k) \bigr)
    =
    \liminf_{b \to +\infty} \bigl(\lambda _ n(b) - \lambda ^ {\Outer} _ n(b)\bigr).
  \]
  For each \(k\), let $p_k = \integerpart {b_k\measure{\Inner}/2\pi}$; then
  \[
    \beta_k := b_k - \frac{2\pi p_k}{\measure{\Inner}}
    \in [0,2\pi/\measure{\Inner}).
  \]
  After passing to a subsequence, if necessary, by the
  Bolzano--Weierstrass theorem, we may assume that \(\beta_k \to \beta_*\) for some \(\beta_ *
  \in[0,2\pi/\measure{\Inner})\).
  For each \(k\), the operators \(\Hamiltonian ^ {\Outer}(b _ k)\) and
  \(\Hamiltonian ^ {\Outer}(\beta_k)\) are unitarily equivalent (via \(\Unitary ^
  {p _ k}\)). In particular,
  \begin{equation}\label{eq:asyequi-outer-period}
    \lambda_j^{\Outer}(b_k)=\lambda_j^{\Outer}(\beta_k)
    \quad\text{for all }j\in\naturalnumbers.
  \end{equation}
  We denote by \(\Ab _ k\) and \(\Ab _ *\) the magnetic vector potentials that appear
  in \(\Hamiltonian(\beta _ k)\) and \(\Hamiltonian(\beta _ *)\), respectively,
  \begin{equation}\label{eq:asyequi-potentials}
    \Ab_k \coloneq \beta_k\Fb_{\Inner}+a\Fb_{\Outer},
    \qquad
    \Ab_* \coloneq \beta_*\Fb_{\Inner}+a\Fb_{\Outer}.
  \end{equation}
  For each \(k\), we further let \(u ^ {(j)} _ k\) be eigenfunctions
  of \(\Hamiltonian(b_k)\) corresponding to \(\lambda_j(b_k)\), \(1\leq j\leq
  n\), chosen to be orthonormal in \(L^2(\Domain)\). Following Notation~\ref{notation}, we write
  \[
    w ^ {(j)} _ k := \Unitary^{-p_k} \bigl(u ^ {(j)} _ k \big|_{\Outer}\bigr).
  \]
  
  \begin{lemma}\label{lem:asyequi-H1}
    For each fixed \(j\in\{1,\dots,n\}\), the sequence \((w ^ {(j)} _ k)\) is
    uniformly bounded in \(H ^ 1(\Outer)\).
  \end{lemma}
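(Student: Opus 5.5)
The plan is to bound separately the $L^2(\Outer)$ norm and the gradient of $w^{(j)}_k$. The $L^2$ part is immediate, because $\Unitary^{-p_k}$ is multiplication by a unimodular function:
\[
\norm{w^{(j)}_k}_{L^2(\Outer)}=\norm{u^{(j)}_k}_{L^2(\Outer)}\le 1 .
\]

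For the gradient, I will use the conjugation identity \eqref{eq:unitarycalc} with $p=p_k$. It gives, pointwise in $\Outer$,
\[
(-\ii\nabla-\Ab_k)w^{(j)}_k=\Unitary^{-p_k}\bigl(-\ii\nabla-b_k\Fb_{\Inner}-a\Fb_{\Outer}\bigr)u^{(j)}_k .
\]
Taking norms, and using \eqref{eq:simplebounds} with $\lambda_j\le\lambda_n$,
\[
\norm{(-\ii\nabla-\Ab_k)w^{(j)}_k}_{L^2(\Outer)}^2\le\norm{(-\ii\nabla-\Ab)u^{(j)}_k}_{L^2(\Domain)}^2=\lambda_j(b_k)\le\Lambda_n .
\]

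Next I will remove the vector potential. Since $\beta_k\in[0,2\pi/\measure{\Inner})$ and $a$ is fixed, and since $\Fb_{\Inner},\Fb_{\Outer}$ are continuous on $\closure{\Domain}$, we get
\[
\norm{\Ab_k}_{L^\infty(\Outer)}\le \frac{2\pi}{\measure{\Inner}}\norm{\Fb_{\Inner}}_{L^\infty}+\abs{a}\,\norm{\Fb_{\Outer}}_{L^\infty}=:M ,
\]
with $M$ independent of $k$. The triangle inequality then yields
\[
\norm{\nabla w^{(j)}_k}_{L^2(\Outer)}\le\sqrt{\Lambda_n}+M\norm{w^{(j)}_k}_{L^2(\Outer)}\le\sqrt{\Lambda_n}+M ,
\]
and combining this with the $L^2$ bound gives the uniform $H^1(\Outer)$ bound.

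The main point requiring care is the conjugation step. The phase in \eqref{eq:unitary} is smooth and well defined on $\Outer$ by path-independence, so $w^{(j)}_k\in H^1(\Outer)$ and the identity holds in the weak sense. Its gradient equals $\frac{2\pi\ii}{\measure{\Inner}}\Fb_{\Inner}$ times the phase, which is bounded; this is what keeps the shift of $b_k$ to $\beta_k$ within a bounded potential. Without the gauge transformation the potential $b_k\Fb_{\Inner}$ would blow up in $\Outer$ and no uniform bound would follow. Lemma~\ref{lem:concentration} is not needed here, though it is consistent with the result.
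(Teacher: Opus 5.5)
Your proposal is correct and follows the paper's proof essentially step for step: you get the $L^2$ bound from unimodularity of $\Unitary^{-p_k}$, bound the magnetic energy by the gauge identity together with \eqref{eq:simplebounds}, use a uniform $L^\infty$ bound on $\Ab_k$ coming from $\beta_k$ lying in the base interval, and conclude with the triangle inequality for $\nabla w^{(j)}_k$. The only differences are cosmetic: you bound by $\Lambda_n$ instead of $\Lambda_j$, and you use the unsquared triangle inequality where the paper uses the squared form $2\Lambda_j+2C^2$.
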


  \begin{proof}
    First we note that \(\norm {w ^ {(j)} _ k}_{L^2(\Outer)} = \norm {u ^ {(j)} _
    k} _ {L^2(\Outer)} \leq 1\). Next, since \(\beta_k\) belongs to the base
    interval, the sequence \((\beta_k)_{k\in\mathbb{N}}\) is bounded. Since
    \(\Fb_{\Inner}\) and \(\Fb_{\Outer}\) are continuous on \(\closure{\Outer}\),
    by the definition of $\Ab_k$ in \eqref{eq:asyequi-potentials}, there exists
    \(C>0\) such that \(\norm {\Ab _ k} _ {L ^ \infty(\Outer)} \leq C\) for all
    \(k\). Moreover, by unitary invariance and \eqref{eq:simplebounds},
    \[
      \begin{multlined}
      \int_{\Outer}\abs{(-\ii\nabla-\Ab_k)w_k^{(j)}}^2\dd x
      =
      \int_{\Outer}\abs{(-\ii\nabla-\Ab)u_k^{(j)}}^2\dd x
      \\
      \leq
      \int_{\Domain}\abs{(-\ii\nabla-\Ab)u_k^{(j)}}^2\dd x
      =
      \lambda_j(b_k)
      \leq \Lambda_j.
      \end{multlined}
    \]
    By the triangle inequality and the Cauchy--Schwarz inequality we can turn
    this into a uniform bound for the gradients,
    \[
      \int_{\Outer}\abs{\nabla w ^ {(j)} _ k} ^ 2 \dd x
      \leq
      2\Lambda_j + 2C^2.
    \]
    Thus for each \(j \in \set{1,\ldots,n}\) we conclude that \(\norm {w_k^{(j)}} _
    {H ^ 1(\Outer)}\) is uniformly bounded in \(k\).
  \end{proof}

  \begin{lemma}\label{lem:asyequi-L2}
    For each \(j\in\{1,\dots,n\}\) there exist \(w_*^{(j)}\in H^1(\Outer)\) and a
    subsequence (not relabeled) such that
    \[
      w ^ {(j)} _ k \weakto  w ^ {(j)} _ *
      \quad \text{in }H^1(\Outer),
      \qquad
      w ^ {(j)} _ k \to w ^ {(j)} _ * \quad \text{in }L^2(\Outer).
    \]
    Moreover, \(w ^ {(j)} _ *\) belongs to the form domain of
    \(\Hamiltonian^{\Outer}(\beta_*)\), that is \(w ^ {(j)} _ * \in H^1(\Outer)\) and
    \(w ^ {(j)} _ *|_{\partial\Inner} = 0\) (in the trace sense).
  \end{lemma}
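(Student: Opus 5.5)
The argument has two parts: a compactness step, and the identification of the trace on \(\partial\Inner\).

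\emph{Compactness.} By Lemma~\ref{lem:asyequi-H1}, each sequence \((w^{(j)}_k)_k\) is bounded in \(H^1(\Outer)\), and there are only finitely many \(j\in\{1,\dots,n\}\). Banach--Alaoglu in the Hilbert space \(H^1(\Outer)\), applied successively to each \(j\), gives a common subsequence along which \(w^{(j)}_k\weakto w^{(j)}_*\) in \(H^1(\Outer)\). Since \(\Outer\) is bounded with smooth (Lipschitz) boundary, Rellich--Kondrachov makes the embedding \(H^1(\Outer)\hookrightarrow L^2(\Outer)\) compact, so the convergence is strong in \(L^2(\Outer)\). The limit automatically lies in \(H^1(\Outer)\).

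\emph{Vanishing trace.} The trace operator \(H^1(\Outer)\to L^2(\partial\Inner)\) is compact, because it factors through \(H^{1/2}(\partial\Inner)\hookrightarrow L^2(\partial\Inner)\). Hence \(w^{(j)}_k|_{\partial\Inner}\to w^{(j)}_*|_{\partial\Inner}\) in \(L^2(\partial\Inner)\). Since \(\abs{w^{(j)}_k}=\abs{u^{(j)}_k}\) on \(\Outer\), it is enough to show \(\norm{u^{(j)}_k}_{L^2(\partial\Inner)}\to0\). Here I would use Lemma~\ref{lem:concentration} for \(v=\abs{u^{(j)}_k}\) restricted to \(\Inner\). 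That lemma gives two bounds:
\[
\norm{v}_{H^1(\Inner)}\le C_j,
\qquad
\norm{v}_{L^2(\Inner)}^2\le C_j/b_k.
\]
The trace inequality on \(\Inner\), in its interpolated form \(\norm{v}_{L^2(\partial\Inner)}^2\le C\norm{v}_{L^2(\Inner)}\norm{v}_{H^1(\Inner)}\), then yields
\[
\norm{u^{(j)}_k}_{L^2(\partial\Inner)}^2\le C\,b_k^{-1/2}\to0.
\]
The traces of \(\abs{u}\) from the two sides of \(\partial\Inner\) agree because \(u\in H^1(\Domain)\). It follows that \(w^{(j)}_*|_{\partial\Inner}=0\), so \(w^{(j)}_*\) is in the form domain of \(\Hamiltonian^{\Outer}(\beta_*)\). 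That form domain consists of \(H^1\) functions with zero trace on \(\partial\Inner\), with no condition imposed on \(\partial\Domain\).

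The main obstacle is the trace step, since the \(L^2\) smallness has to be transferred to the boundary. If the interpolated trace inequality is inconvenient, a simpler alternative suffices: the compact trace map on \(\Inner\) together with \(v\to0\) in \(L^2(\Inner)\) and boundedness in \(H^1(\Inner)\) implies, through a weakly convergent subsequence with limit \(0\), that the traces tend to \(0\) in \(L^2(\partial\Inner)\).
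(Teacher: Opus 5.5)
Your proposal is correct and follows the paper's argument: Banach--Alaoglu and Rellich--Kondrachov give the convergence, and the multiplicative trace inequality on $\Inner$, applied to $\abs{u_k^{(j)}}$ together with Lemma~\ref{lem:concentration}, forces the traces on $\partial\Inner$ to vanish. The only cosmetic difference is in passing to the limit in the trace: you use compactness of the trace map, whereas the paper uses weak continuity of the trace and uniqueness of weak limits.
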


  \begin{proof}
    Fix \(j\in\{1,\dots,n\}\). By Lemma~\ref{lem:asyequi-H1}, \(\set{w_k^{(j)}}\)
    is contained in a closed ball in \(H ^ 1(\Outer)\). The Banach--Alaoglu
    theorem guarantees existence of a weakly convergent subsequence.
    Therefore, there exist some \(w ^ {(j)} _ * \in H ^ 1(\Outer)\) such that
    \[
      w ^ {(j)} _ k \weakto  w ^ {(j)} _ * \quad \text {in } H^1(\Outer).
    \]

    Since \(\Outer\) is bounded with regular boundary, the embedding
    \(H^1(\Outer)\hookrightarrow L^2(\Outer)\) is compact (Rellich--Kondrachov).
    Thus, after extracting a further subsequence if needed, we have
    \(w ^ {(j)} _ k \to w ^ {(j)} _ *\) strongly in \(L ^ 2(\Outer)\).

    We prove next that the trace of \(w ^ {(j)} _ *\) vanishes on \(\partial\Inner\).
    According to Lemma~\ref{lem:concentration} there exists \(C _ j > 0\) such that
    \begin{equation}\label{eq:inner-mass}
      \norm {u ^ {(j)} _ k} _ {L ^ 2(\Inner)} ^ 2
      \leq
      \frac {C _ j}{b_k}.
    \end{equation}
    Note that the constant $C_j>0$ is uniform in $k$ since it is uniform with
    respect to the intensity of the magnetic field (see
    Lemma~\ref{lem:concentration}). In particular, following the proof of
    Lemma~\ref{lem:concentration}, we can also deduce that
    \begin{equation}\label{eq: est L2norm uj from 1 to n}
        \|u^{(j)}_k\|^2_{L^2(\Inner)} \leq \frac{C_n}{b_k}, \qquad \forall j=1,\ldots,n.
    \end{equation}
    Let \(\gamma \colon H^1(\Inner)\to L^2(\partial\Inner)\) denote the trace
    map. By the multiplicative trace inequality (see, e.g.,~\cite[Theorem
    1.6.6]{BrSc}) there exists \(C_{\mathrm {tr}} > 0\) such that for all \(v \in
    H^1(\Inner)\),
    \begin{equation}\label{eq:trace-L2-interp}
      \norm {\gamma v} _ {L^2(\partial\Inner)}^2
      \leq
      C_{\mathrm {tr}}
      \norm {v} _ {L ^ 2(\Inner)}
      \norm {v} _ {H ^ 1(\Inner)}.
    \end{equation}
    We apply~\eqref{eq:trace-L2-interp} with \(v = \abs {u ^ {(j)} _ k\vert_V}\),
    and get, by Lemma~\ref{lem:concentration}, that
    \begin{equation}\label{eq:trace-modulus-zero}
      \gamma \abs{u ^ {(j)} _ k} \to 0
      \quad
      \text{in }L ^ 2(\partial\Inner).
    \end{equation}
    Since \(u ^ {(j)} _ k \in H ^ 1(\Domain)\), the traces of \(\abs {u ^ {(j)} _
    k}\) from \(\Inner\) and \(\Outer\) coincide on \(\partial\Inner\). Since
    \(\abs {w ^ {(j)} _ k} = \abs {u ^ {(j)} _ k}\) in \(\Outer\), and since one
    can interchange the trace and absolute value,
    \[
      \norm {\gamma ^ {\Outer} w ^ {(j)} _ k} _ {L ^ 2(\partial\Inner)}
      \to
      0,
      \quad\text {as } k \to +\infty,
    \]
    where we denoted by $\gamma^{\Outer}$ the trace map defined for $\partial \Outer$.
    On the other hand, since \(w_k^{(j)}\weakto w_*^{(j)}\) weakly in
    \(H^1(\Outer)\) and the trace map \(H^1(\Outer)\to L^2(\partial\Inner)\) is
    continuous, we have
    \[
      \gamma^{\Outer} w_k^{(j)} \weakto \gamma^{\Outer} w_*^{(j)}
      \quad\text{weakly in }L^2(\partial\Inner).
    \]
    By uniqueness of the weak limit, it follows that
    \[
      \gamma^{\Outer} w_*^{(j)}=0.
    \]
    Hence \(w_*^{(j)}\) satisfies the Dirichlet condition on \(\partial\Inner\),
    and therefore \(w_*^{(j)}\) belongs to the form domain of
    \(\Hamiltonian^{\Outer}(\beta_*)\).
  \end{proof}

  By a diagonal extraction, we may assume that the conclusions of Lemma~\ref{lem:asyequi-L2}
  hold simultaneously for all \(j \in \set {1,\ldots,n}\).

  \begin{lemma}\label{lem:asyequi-ON}
    The family \(\{w ^ {(j)} _ *\}_{j = 1} ^ n\) is orthonormal in
    \(L^2(\Outer)\), i.e.,
    \[
      \innerproduct{ w_*^{(j)},w_*^{(m)}} _ {L ^ 2(\Outer)}
      =
      \delta_{jm}.
    \]
  \end{lemma}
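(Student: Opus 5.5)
The plan is to pass to the limit in the inner products of the approximating functions, using strong $L^2(\Outer)$ convergence from Lemma~\ref{lem:asyequi-L2}, and to control the discrepancy coming from the inner region via the concentration estimate \eqref{eq: est L2norm uj from 1 to n}.

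First, recall that the eigenfunctions $u_k^{(1)},\dots,u_k^{(n)}$ were chosen orthonormal in $L^2(\Domain)$. Since $\Domain=\closure{\Inner}\cup\Outer$ and $\partial\Inner$ has measure zero, we can split
\[
  \delta_{jm}
  = \innerproduct{u_k^{(j)},u_k^{(m)}}_{L^2(\Domain)}
  = \innerproduct{u_k^{(j)},u_k^{(m)}}_{L^2(\Inner)}
  + \innerproduct{u_k^{(j)},u_k^{(m)}}_{L^2(\Outer)}.
\]
The gauge map $\Unitary^{-p_k}$ is multiplication by a unimodular function, so it is unitary on $L^2(\Outer)$. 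The same factor multiplies both functions, hence
\[
  \innerproduct{u_k^{(j)},u_k^{(m)}}_{L^2(\Outer)}
  = \innerproduct{w_k^{(j)},w_k^{(m)}}_{L^2(\Outer)}.
\]

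Next, we bound the inner term. By Cauchy--Schwarz and \eqref{eq: est L2norm uj from 1 to n},
\[
  \bigl|\innerproduct{u_k^{(j)},u_k^{(m)}}_{L^2(\Inner)}\bigr|
  \le \norm{u_k^{(j)}}_{L^2(\Inner)}\norm{u_k^{(m)}}_{L^2(\Inner)}
  \le C_n/b_k,
\]
which tends to $0$ as $k\to+\infty$. Therefore $\innerproduct{w_k^{(j)},w_k^{(m)}}_{L^2(\Outer)}\to\delta_{jm}$.

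Finally, we pass to the limit. By Lemma~\ref{lem:asyequi-L2}, after the diagonal extraction, $w_k^{(j)}\to w_*^{(j)}$ strongly in $L^2(\Outer)$ for every $j\le n$. The inner product is jointly continuous under strong convergence; explicitly, its increment is bounded by $\norm{w_k^{(j)}-w_*^{(j)}}\norm{w_k^{(m)}}+\norm{w_*^{(j)}}\norm{w_k^{(m)}-w_*^{(m)}}$ with uniformly bounded norms. Hence $\innerproduct{w_k^{(j)},w_k^{(m)}}\to\innerproduct{w_*^{(j)},w_*^{(m)}}$, which gives the claim.

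There is no real obstacle here. The only points needing care are that strong, not merely weak, $L^2$ convergence is required, which is supplied by Rellich--Kondrachov in Lemma~\ref{lem:asyequi-L2}. One must also make sure all $n$ sequences converge along a common subsequence, which the diagonal extraction provides.
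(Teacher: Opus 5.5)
Your proof is correct and takes essentially the same route as the paper. Unitarity of the gauge factor together with the inner-mass bound makes $\{w_k^{(j)}\}$ asymptotically orthonormal in $L^2(\Outer)$, and strong $L^2(\Outer)$ convergence passes this to the limit. The only cosmetic difference is that you treat diagonal and off-diagonal entries in a single identity, whereas the paper handles $j\neq m$ and the norm bound $\norm{w_k^{(j)}}_{L^2(\Outer)}^2\geq 1-C_j/b_k$ separately.
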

  \begin{proof}
    For \(j\neq m\),
    \[
      \innerproduct{ w_k^{(j)},w_k^{(m)} } _ {L ^ 2(\Outer)}
      =
      \innerproduct{ \Unitary^{-p_k}u_k^{(j)},\Unitary^{-p_k}u_k^{(m)} } _ {L ^ 2(\Outer)}
      =
      -
      \innerproduct{ u_k^{(j)},u_k^{(m)} } _ {L ^ 2(\Inner)}.
    \]
    by using the orthogonality of $(u_k^{(j)})$ in $L^2(\Domain)$.
    The Cauchy--Schwarz inequality and Lemma~\ref{lem:concentration} imply that, as \(k
    \to +\infty\),
    \[
      \abs {
        \innerproduct{
          w_k^{(j)},
          w_k^{(m)}
        } _ {L ^ 2(\Outer)}
      }
      \to
      0.
    \]
    On the other hand, by Lemma~\ref{lem:concentration}, we also have that
    \[
      1
      \geq
      \norm {w ^ {(j)} _ k} _ {L^2(\Outer)}^2
      \geq 1-\frac{C_j}{b_k},
    \]
    and so \(\norm {w ^ {(j)} _ k} _ {L^2(\Outer)}^2 \to 1\) as \(k \to
    +\infty\). By the continuity of the inner product and norm, and by the strong
    \(L^2(\Outer)\)-convergence in Lemma~\ref{lem:asyequi-L2} we conclude the
    claimed orthonormality.
  \end{proof}

  We recall that the quadratic form \(\Quadraticform ^ {\Outer} _ {b}\) corresponding to the
  operator \(\Hamiltonian ^ {\Outer}(b)\) is given by
  \[
    \Quadraticform ^ {\Outer} _ {b}[\psi]
    \coloneq
    \int_{\Outer}\abs{(-\ii\nabla-b\Fb_{\Inner}-a\Fb_{\Outer})\psi}^2\dd x,
    \quad
    \psi\in H^1(\Outer),\ \psi|_{\partial\Inner}=0.
  \]
  We will make use of the space
  \[
      S _ * ^n\coloneq
      \Span \set{w ^ {(1)} _ *, \ldots, w ^ {(n)} _ * } \subset L^2(\Outer).
  \]
  By Lemma~\ref{lem:asyequi-ON} the dimension of \(S _ * ^n\) is \(n\).

  \begin{lemma}\label{lem:asyequi-liminf-minmax}
    It holds that
    \[
      \lambda_n^{\Outer}(\beta_*) \leq \liminf_{k\to\infty}\lambda_n(b_k).
    \]
  \end{lemma}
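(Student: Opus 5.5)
By the min-max principle for \(\Hamiltonian^{\Outer}(\beta_*)\), and since \(S^n_*\) is an \(n\)-dimensional subspace of the form domain (Lemmas~\ref{lem:asyequi-L2} and~\ref{lem:asyequi-ON}), it suffices to show that
\[
  \max_{\psi\in S^n_*,\ \norm{\psi}=1}\Quadraticform^{\Outer}_{\beta_*}[\psi]
  \leq
  \liminf_{k\to\infty}\lambda_n(b_k).
\]
Fix coefficients \(c=(c_1,\dots,c_n)\in\complexes^n\) with \(\abs{c}=1\). Set \(\psi_*=\sum_j c_j w_*^{(j)}\), which satisfies \(\norm{\psi_*}=1\) by orthonormality. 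Set also \(\psi_k=\sum_j c_j w_k^{(j)}\) and \(v_k=\sum_j c_j u_k^{(j)}\). Then \(\psi_k\weakto\psi_*\) in \(H^1(\Outer)\) and \(\psi_k\to\psi_*\) in \(L^2(\Outer)\).

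The first step is to bound \(\Quadraticform^{\Outer}\) for \(\psi_k\) from above. By the gauge identity~\eqref{eq:unitarycalc},
\[
  \int_\Outer\abs{(-\ii\nabla-\Ab_k)\psi_k}^2\dd x=\int_\Outer\abs{(-\ii\nabla-\Ab)v_k}^2\dd x .
\]
The right-hand side is at most \(\Quadraticform[v_k]=\sum_j\abs{c_j}^2\lambda_j(b_k)\leq\lambda_n(b_k)\), because the \(u_k^{(j)}\) are orthonormal eigenfunctions, so the cross terms vanish.

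The second step is to pass to the limit by weak lower semicontinuity. Write
\[
  (-\ii\nabla-\Ab_k)\psi_k=(-\ii\nabla-\Ab_*)\psi_k-(\beta_k-\beta_*)\Fb_\Inner\psi_k .
\]
The last term tends to \(0\) in \(L^2\), since \(\Fb_\Inner\) is bounded and \(\norm{\psi_k}\leq\sqrt n\). Moreover \((-\ii\nabla-\Ab_*)\psi_k\weakto(-\ii\nabla-\Ab_*)\psi_*\) in \(L^2(\Outer)\): the gradient converges weakly, and the potential term converges strongly because \(\Ab_*\) is bounded. Weak lower semicontinuity of the \(L^2\) norm then gives
\[
  \Quadraticform^{\Outer}_{\beta_*}[\psi_*]\leq\liminf_k\lambda_n(b_k).
\]

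The main obstacle is uniformity over \(c\): the maximum over the unit sphere must be controlled, not just each fixed \(c\). This is not a real difficulty, since the bound \(\Quadraticform^{\Outer}_{\beta_*}[\psi_*]\leq\liminf_k\lambda_n(b_k)\) holds for every unit \(c\) with a right-hand side independent of \(c\). Taking the supremum over \(c\) therefore gives the displayed inequality, and min-max yields \(\lambda_n^{\Outer}(\beta_*)\leq\liminf_k\lambda_n(b_k)\).
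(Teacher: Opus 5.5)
Your proof is correct and follows essentially the same route as the paper. Both arguments use min-max on the limiting space \(S^n_*\), the energy bound \(\Quadraticform^{\Outer}_{\beta_k}[\psi_k]\leq\lambda_n(b_k)\) (from gauge invariance and orthonormality of the eigenfunctions), and weak lower semicontinuity with \(\Ab_k\to\Ab_*\) uniformly. Two details are slightly more streamlined than in the paper. Normalizing \(\abs{c}=1\) and using \(\norm{\psi_*}=1\) from Lemma~\ref{lem:asyequi-ON} lets you skip the paper's Gram-matrix comparison of \(\norm{u_{k,n}}_{L^2(\Domain)}\) with \(\norm{\psi_{k,n}}_{L^2(\Outer)}\), since that work is already contained in Lemma~\ref{lem:asyequi-ON}. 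Your decomposition into a weakly convergent part plus a strongly vanishing perturbation replaces the paper's \((1+\eta)\) weighted Cauchy--Schwarz step.
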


  \begin{proof}
    By the min--max principle for the Neumann--Dirichlet operator \(\Hamiltonian^{\Outer}(\beta_*)\),
    \begin{equation}\label{eq:minmax-restrict}
      \lambda_n^{\Outer}(\beta_*)
      \leq
      \max_{0\neq \psi\in S _ * ^n}
      \frac
        {\Quadraticform ^ {\Outer} _ {\beta_*}[\psi]}
        {\norm {\psi } _ {L ^ 2(\Outer)} ^ 2}.
    \end{equation}
    It therefore suffices to prove that for every \(\psi\in S _ * ^n\setminus\set{0}\),
    \begin{equation}\label{eq:goal-rayleigh}
      \frac
        {\Quadraticform ^ {\Outer} _ {\beta_*}[\psi]}
        {\norm {\psi} _ {L ^ 2(\Outer)} ^ 2}
      \leq
      \liminf_{k\to\infty}\lambda_n(b_k),
    \end{equation}
    and then take the maximum over \(\psi\).

    Fix \(\psi_n\in S _ * ^n\). Write
    \[
      \psi_n = \sum_{j = 1} ^ n c _ j w ^ {(j)} _ *
    \]
    for some coefficients \(c_1,\ldots,c_n\in\complexes\), not all zero, and define
    \[
      \psi_{k,n}
      \coloneq
      \sum_{j = 1} ^ n c _ j w ^ {(j)} _ k.
    \]
    By Lemma~\ref{lem:asyequi-L2},
    \begin{equation}\label{eq:psi-conv}
      \psi_{k,n}\to\psi_n \ \text{in }L^2(\Outer),
      \quad
      \psi_{k,n}\weakto \psi_n \ \text{in }H^1(\Outer).
    \end{equation}
    In particular,
    \begin{equation}\label{eq:psi-norm}
      \|\psi_{k,n}\|_{L^2(\Outer)}\to \|\psi_n\|_{L^2(\Outer)}.
    \end{equation}
    We claim that
    \begin{equation}\label{eq:psi-liminf-energy}
      \Quadraticform ^ {\Outer} _ {\beta_ *}[\psi_n]
      \leq
      \liminf_{k\to\infty}
        \Quadraticform ^ {\Outer} _ {\beta_ k}[\psi_{k,n}].
    \end{equation}
    Since \(\Ab_k \to \Ab_*\) uniformly on \(\closure {\Outer}\) we find, by also
    using~\eqref{eq:psi-norm}, that
    \begin{equation}\label{eq:A-diff}
      \|(\Ab_k-\Ab_*)\psi_{k,n}\|_{L^2(\Outer)}
      \to 0,
      \quad
      k \to +\infty.
    \end{equation}
    From~\eqref{eq:psi-conv} we also get that
    \begin{equation}\label{eq:weak-conv-magnetic}
      (-\ii\nabla-\Ab_*)\psi_{k,n} \weakto  (-\ii\nabla-\Ab_*)\psi_n
      \quad\text{in }L^2(\Outer).
    \end{equation}
    By weak lower semicontinuity of the \(L^2\)-norm,
    \begin{equation}\label{eq:wls-psi}
      \Quadraticform ^ {\Outer} _ {\beta_ *}[\psi_n]
      =
      \norm {(-\ii\nabla-\Ab_*)\psi_n } _ {L ^ 2(\Outer)} ^ 2
      \leq
      \liminf_{k\to\infty}\|(-\ii\nabla-\Ab_*)\psi_{k,n}\|_{L^2(\Outer)}^2.
    \end{equation}
    We also need to replace \(\Ab _ *\) by \(\Ab _ k\) in the right-hand side,
    and we use the uniform convergence \(\Ab _ k \to \Ab _ *\) again, to verify
    that it is possible. Indeed, by the triangle inequality
    \[
      \norm {(-\ii\nabla-\Ab_*)\psi_{k,n}} _ {L ^ 2(\Outer)}
      \leq
      \norm {(-\ii\nabla-\Ab_k)\psi_{k,n} } _ {L ^ 2(\Outer)}
      +
      \norm {(\Ab_k-\Ab_*)\psi_{k,n}} _ {L ^ 2(\Outer)}.
    \]
    Hence, for any \(\eta > 0\), by the weighted Cauchy--Schwarz inequality,
    \[
      \begin{aligned}
        \norm {(-\ii\nabla-\Ab_*)\psi_{k,n}} _ {L ^ 2(\Outer)} ^ 2
        &\leq
        (1 + \eta)\norm {(-\ii\nabla-\Ab_k)\psi_{k,n}} _ {L ^ 2(\Outer)} ^ 2
        \\
        &\qquad +
        (1 + 1/\eta)
        \norm {(\Ab_k-\Ab_*)\psi_{k,n}} _ {L ^ 2(\Outer)} ^ 2.
      \end{aligned}
    \]
    By first passing to \(\liminf _ {k\to+\infty}\) and then letting \(\eta \to 0^+\),
    we find that
    \[
      \liminf _ {k \to +\infty} \norm {(-\ii\nabla-\Ab_*)\psi_{k,n}} _ {L ^ 2(\Outer)} ^ 2
      \leq
      \liminf _ {k \to +\infty} \norm {(-\ii\nabla-\Ab_k)\psi_{k,n}} _ {L ^ 2(\Outer)} ^ 2.
    \]
    Combining this estimate with~\eqref{eq:wls-psi} we
    get~\eqref{eq:psi-liminf-energy}.

    To proceed, we define the function \( {u} _ {k,n}\) as the combination of
    the eigenfunctions \(u ^ {(j)} _ k\) of \(\Hamiltonian(b _ k)\), with same
    coefficients \(c _ j\) as above,
    \[
     {u} _ {k,n}
      \coloneq
      \sum_{j = 1} ^ n c _ j u ^ {(j)} _ k
      \in
      \Span\{u ^ {(1)} _ k,\ldots,u ^ {(n)} _ k\}
      \subset L^2(\Domain).
    \]
    Since the \(u ^ {(j)} _ k\) are orthonormal eigenfunctions of \(\Hamiltonian(b_k)\),
    \[
      \int_{\Domain}\abs{(-\ii\nabla-\Ab) {u} _ {k,n}}^2 \dd x
      =
      \sum_{j=1}^n \abs {c _ j} ^ 2 \lambda _ j(b _ k)
      \leq
      \lambda _ n(b _ k) \sum_{j = 1} ^ n \abs {c _ j} ^ 2
      =
      \lambda _ n(b _ k) \norm {{u} _ {k,n} } _ {L ^ 2(\Domain)} ^ 2.
    \]
    Applying \(\Unitary^{-p_k}\) does not change norms or magnetic energies, and
    restricting the integral to \(\Outer\) can only decrease it. Hence
    \begin{equation}\label{eq:outer-energy-bound}
       \Quadraticform ^ {\Outer} _ {\beta _ k}[{\psi} _ {k,n} ]
      =
      \int_{\Outer}\abs{(-\ii\nabla-\Ab_k){\psi} _ {k,n} }^2 \dd x
      \leq
      \lambda _ n(b _ k) \norm {{u} _ {k,n} } _ {L ^ 2(\Domain)} ^ 2.
    \end{equation}

    We need also to compare \(\norm {{u} _ {k,n} } _ {L ^ 2(\Domain)}\) and
    \(\norm {{\psi} _ {k,n} } _ {L ^ 2(\Outer)}\). Since the unitary operator
    \(\Unitary^{-p_k}\) is multiplication with a unimodular function on
    \(\Outer\), we have \(\norm {{u} _ {k,n} } _ {L ^ 2(\Outer)} = \norm {{\psi} _ {k,n} } _ {L ^ 2(\Outer)}\), and therefore
    \[
      \norm {{u} _ {k,n} } _ {L ^ 2(\Domain)} ^ 2
      =
      \norm {{\psi} _ {k,n} } _ {L ^ 2(\Outer)} ^ 2
      +
      \norm {{u} _ {k,n} } _ {L ^ 2(\Inner)} ^ 2.
    \]
    By \eqref{eq: est L2norm uj from 1 to n}, we know that there exists a constant $C_n>0$ such that
    \(\|u_k^{(j)}\|_{L^2(\Inner)}^2\le C_n/b_k\) for each \(j\in\set{1,\ldots,n}\).
    Hence, by the Cauchy--Schwarz inequality,
    \[
      \norm {{u} _ {k,n} }_{L ^ 2(\Inner)}
      \leq \sum_{j=1}^n \abs {c _ j} \norm {u ^ {(j)} _ k} _ {L^2(\Inner)}
      \leq \Bigl(\sum_{j = 1} ^ n \abs {c _ j} ^ 2\Bigr) ^ {1/2}
           \Bigl(\sum_{j = 1} ^ n \norm {u ^ {(j)} _ k} _ {L ^ 2(\Inner)} ^ 2\Bigr) ^ {1/2}
      \leq \norm{c}_{\ell^2} \Bigl(\frac{C_n n}{b _ k}\Bigr) ^ {1/2}.
    \]
    Thus \(\norm {{u} _ {k,n}} _ {L ^ 2(\Inner)} ^ 2 \leq (C_nn/b _ k)\norm
    {c} _ {\ell ^ 2} ^ 2\).

    From the proof of Lemma~\ref{lem:asyequi-ON} we find that the Gram
    matrices \((\innerproduct { w ^ {(j)} _ k,w ^ {(m)} _ k } _ {L ^ 2(\Outer)})
    _ {j,m=1}^n\) converge to the identity matrix as \(k \to +\infty\). In
    particular, for \(k\) large enough, the Gram matrix $G_k = (\langle
    w_k^{(j)}, w^{(m)}_k\rangle_{L^2(\Outer)})_{j,m=1}^n$ is uniformly positive
    definite and such that $G_k\rightarrow \mathrm{Id}$ as $k\rightarrow\infty$.
    Thus, there exists \(c _ 0 > 0\) independent of \(k\) such that
    \begin{equation}\label{eq:psi-lower}
      \norm {{\psi} _ {k,n}} _ {L ^ 2(\Outer)} ^ 2
      =
      \sum _ {j , m=1} ^ n
        c _ j\conjugate{c _ m}
        \innerproduct { w ^ {(j)} _ k,w ^ {(m)} _ k } _ {L ^ 2(\Outer)}
      \geq
      c _ 0 \sum _ {j = 1} ^ n \abs {c _ j} ^ 2
      =
      c _ 0 \norm {c} _ {\ell ^ 2} ^ 2.
    \end{equation}
    Consequently, as \(k \to +\infty\), for fixed $n$, we have
    \[
      \frac{\norm {{u} _ {k,n} } _ {L ^ 2(\Domain)} ^ 2}
           {\norm {{\psi} _ {k,n}} _ {L ^ 2(\Outer)}^2}
      =
      1 + \frac{\norm{{u} _ {k,n}} _ {L ^ 2(\Inner)} ^ 2}
               {\norm {{u} _ {k,n}} _ {L ^ 2(\Outer)}^2}
      \leq
      1 + \frac{(C_n n/b_k)\norm {c} _ {\ell ^ 2} ^ 2}
               {c _ 0 \norm {c} _ {\ell ^ 2} ^ 2}
      =
      1 + O(1/b_k)
      =
      1 + o(1).
    \]
    Inserting this into~\eqref{eq:outer-energy-bound} yields
    \begin{equation}\label{eq:Qk-Rayleigh}
      \frac{\Quadraticform ^ {\Outer} _ {\beta _ k}[{\psi} _ {k,n}]}
           {\norm {{\psi} _ {k,n}} _ {L ^ 2(\Outer)} ^ 2}
      \leq
      \lambda_n (b_k) \frac{\norm {{u} _ {k,n}} _ {L ^ 2(\Domain)} ^ 2}
                           {\norm {{\psi} _ {k,n}} _ {L ^ 2(\Outer)} ^ 2}
      \leq
      \lambda_n(b_k)\bigl(1 + o(1)\bigr).
    \end{equation}

    From~\eqref{eq:psi-liminf-energy},~\eqref{eq:psi-norm},
    and~\eqref{eq:Qk-Rayleigh}, we obtain
    \[
      \frac{\Quadraticform ^ {\Outer} _ {\beta_ *}[{\psi} _ {n}]}
           {\norm {{\psi} _ {n}} _ {L ^ 2(\Outer)} ^ 2}
      \leq
      \liminf_{k\to\infty}
      \frac{\Quadraticform ^ {\Outer} _ {\beta _ k}[{\psi} _ {k,n}]}
           {\norm {{\psi} _ {k,n}} _ {L ^ 2(\Outer)} ^ 2}
      \leq
      \liminf_{k\to\infty}
      \lambda _ n(b _ k).
    \]
    This is exactly~\eqref{eq:goal-rayleigh}.
  \end{proof}

  We now have everything we need to finish the proof. By the unitary equivalence
  of \(\Hamiltonian ^ {\Outer} (b _ k)\) and \(\Hamiltonian ^ {\Outer} (\beta
  _k)\) we have \(\lambda_n^{\Outer}(b_k) = \lambda_n^{\Outer}(\beta_k)\). Since
  \(b\mapsto\lambda_n^{\Outer}(b)\) is continuous and \(\beta_k\to \beta_*\), it
  follows that
  \[
    \lambda_n^{\Outer}(b_k)=\lambda_n^{\Outer}(\beta_k)\to \lambda_n^{\Outer}(\beta_*).
  \]
  Together with Lemma~\ref{lem:asyequi-liminf-minmax}, we infer
  \[
    \liminf_{k\to\infty}\bigl(\lambda_n(b_k)-\lambda_n^{\Outer}(b_k)\bigr)
    =
    \liminf_{k\to\infty}\lambda_n(b_k)-\lim_{k\to\infty}\lambda_n^{\Outer}(b_k)
    \geq 0,
  \]
  which implies the inequality in~\eqref{eq:goal-liminf}. Since $\psi_n\in
  S^{n}_\ast\setminus \{0\}$ is arbitrary, we conclude the proof of
  Proposition~\ref{prop:asyequi}.
\end{proof}

\subsection{Two consequences}\label{subsec:profile}

As a consequence of the proof of Proposition~\ref{prop:asyequi}, using PDE
techniques, we can upgrade the convergence of eigenfunctions modulo phase shifts.
For a fixed $\beta_\ast \in [0,2\pi/\measure{\Inner})$, let $\{p_k\}_{k\in \naturalnumbers}$ be a sequence such
that $p_k\in \integers$ and $p_k\rightarrow \infty$ and let
\begin{equation}\label{eq:def-bk}
b_k:=\beta_*+\frac{2\pi}{\measure{\Inner}}p_k \quad\text{diverges as $k\to+\infty$,}
\end{equation}
and for all $k\in\naturalnumbers$, let $\{u_k^{(n)}\}_{n\in\naturalnumbers}$ be
the orthonormal set of eigenfunctions of $\Hamiltonian (b_k)$, and
$\{w_k^{(n)}\}_{n\in\naturalnumbers}$ be the corresponding functions defined via
the phase shift. More precisely, following Notation~\ref{notation}, we write
$w_k^{(j)} \coloneq \mathcal{U}^{-p_k}(u^{(j)}_k\vert_{\Outer})$.

\begin{proposition}[Convergence of eigenfunctions]\label{prop:conv-ef}
  Let  $0\leq \beta_*<2\pi/\measure{\Inner}$ be given and consider a sequence $\{b_k\}_{k\in\naturalnumbers}$ as in \eqref{eq:def-bk}. For a given positive integer $n$, there exists an orthonormal set $\{w^{(j)}_*\}_{1\leq j\leq n}$ of eigenfunctions of $\Hamiltonian ^ {\Outer}(\beta_*)$ and a subsequence (not relabeled) such that, for all $j\in\{1,2,\ldots,n\}$, we have
  \begin{equation}\label{eq: conv mod H1}
      \abs {u^{(j)}_k}\to \abs {w_*^{(j)}}\text{ in }H^1(\Outer;\reals),
  \end{equation}
  \begin{equation}\label{eq: conv min in H1omega0}
      w_k^{(j)}\to w_*^{(j)}\text{ in }H^1(\Outer;\complexes),
  \end{equation}
  and
  \[
    \Hamiltonian ^{\Outer}(\beta_*)w_*^{(j)}
    =
    \lambda_j^{\Outer}(\beta_*)w_*^{(j)}
    \quad\text{in $\Outer$},
    \quad w_*^{(j)}|_{\partial\Inner}=0.
  \]
\end{proposition}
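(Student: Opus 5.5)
We reuse the compactness argument from the proof of Proposition~\ref{prop:asyequi}, now with \(\beta_k=\beta_*\) fixed, so that \(\Ab_k=\Ab_*\) on \(\Outer\) for every \(k\). By Lemmas~\ref{lem:asyequi-H1}, \ref{lem:asyequi-L2} and~\ref{lem:asyequi-ON}, after extracting a subsequence we obtain \(w_*^{(j)}\) for \(1\le j\le n\) with three properties: \(w_k^{(j)}\weakto w_*^{(j)}\) weakly in \(H^1(\Outer)\), \(w_k^{(j)}\to w_*^{(j)}\) strongly in \(L^2(\Outer)\), and the family \(\{w_*^{(j)}\}\) is orthonormal with vanishing trace on \(\partial\Inner\). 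The plan has three steps: identify each limit as an eigenfunction with the right eigenvalue, upgrade to strong \(H^1\) convergence through convergence of energies, and deduce the statement for moduli.

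\emph{Energy identification.} The energies obey \(\Quadraticform^{\Outer}_{\beta_*}[w_k^{(j)}]\le \lambda_j(b_k)\), and Proposition~\ref{prop:asyequi} together with periodicity gives \(\lambda_j(b_k)\to\lambda_j^{\Outer}(\beta_*)\). Weak lower semicontinuity then yields \(\Quadraticform^{\Outer}_{\beta_*}[w_*^{(j)}]\le\lambda_j^{\Outer}(\beta_*)\). For the corresponding bound on the Gram-type matrices, note that \(\langle (-\ii\nabla-\Ab)u_k^{(j)},(-\ii\nabla-\Ab)u_k^{(m)}\rangle_{L^2(\Domain)}=\lambda_j(b_k)\delta_{jm}\), so the energy matrix of the \(w_k\) restricted to \(\Outer\) is dominated by \(\mathrm{diag}(\lambda_j(b_k))\). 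Passing to the limit by lower semicontinuity, applied to every linear combination \(\sum_j c_jw_*^{(j)}\), shows that the quadratic form on \(S_*^j=\Span\{w_*^{(1)},\dots,w_*^{(j)}\}\) is bounded by \(\sum_i\lambda_i^{\Outer}(\beta_*)\abs{c_i}^2\). An induction on \(j\) then identifies each \(w_*^{(j)}\) as an eigenfunction. For \(j=1\), the Rayleigh quotient equals the infimum \(\lambda_1^{\Outer}(\beta_*)\), so \(w_*^{(1)}\) is a ground state. For general \(j\), assume \(w_*^{(1)},\dots,w_*^{(j-1)}\) are eigenfunctions. Then \(w_*^{(j)}\) is orthogonal to them, and its quotient is at most \(\lambda_j^{\Outer}(\beta_*)\), which is the min-max value on the orthogonal complement of the first \(j-1\) eigenfunctions. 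Equality must therefore hold, and \(w_*^{(j)}\) is a minimizer on that complement. The Euler--Lagrange equation, tested against \(H^1\) functions vanishing on \(\partial\Inner\), gives the eigenvalue equation; the Neumann condition on \(\partial\Domain\) comes out naturally because \(\nu\cdot\Ab_*=0\). The main obstacle is that the eigenvalues may be degenerate. In that case I would not require \(w_*^{(j)}\) to be prescribed eigenfunctions; the argument only needs membership in the correct eigenspace, and the orthonormality supplied by Lemma~\ref{lem:asyequi-ON} is enough for this.

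\emph{Strong convergence.} The chain of inequalities is now squeezed: \(\lambda_j^{\Outer}(\beta_*)=\Quadraticform^{\Outer}_{\beta_*}[w_*^{(j)}]\le\liminf\Quadraticform^{\Outer}_{\beta_*}[w_k^{(j)}]\le\limsup\lambda_j(b_k)=\lambda_j^{\Outer}(\beta_*)\). Hence \(\norm{(-\ii\nabla-\Ab_*)w_k^{(j)}}\to\norm{(-\ii\nabla-\Ab_*)w_*^{(j)}}\). This norm convergence combined with weak convergence in \(L^2\) gives strong convergence of \((-\ii\nabla-\Ab_*)w_k^{(j)}\). Since \(\Ab_*\) is bounded and \(w_k^{(j)}\to w_*^{(j)}\) in \(L^2\), we get \(\nabla w_k^{(j)}\to\nabla w_*^{(j)}\) in \(L^2\), which is \eqref{eq: conv min in H1omega0}.

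\emph{Moduli.} On \(\Outer\) we have \(\abs{u_k^{(j)}}=\abs{w_k^{(j)}}\). The map \(v\mapsto\abs v\) is continuous from \(H^1(\Outer;\complexes)\) to \(H^1(\Outer;\reals)\); this is a known fact, proved through the pointwise a.e.\ formula \(\nabla\abs v=\re(\conjugate{v}\nabla v/\abs v)\mathbf 1_{\{v\ne0\}}\) and dominated convergence along a.e.-convergent subsequences. Applying it to the strong convergence just established gives \eqref{eq: conv mod H1}. If the subsequence argument in that last step becomes delicate, a fallback is to combine the diamagnetic inequality with the bound \(\norm{\nabla\abs{v}-\nabla\abs{w}}\) controlled along a.e.-convergent subsequences, and then conclude by a subsequence-of-subsequence argument.
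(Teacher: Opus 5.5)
Your proof is correct. Its last two steps agree in substance with the paper's. For strong convergence, the paper also proves $\norm{(-\ii\nabla-\Ab_*)(w_k^{(j)}-w_*^{(j)})}_{L^2(\Outer)}\to0$ by expanding the square; it evaluates the cross term with the eigenvalue equation, where you use weak convergence plus convergence of norms. For \eqref{eq: conv mod H1}, the paper cites the Marcus--Mizel continuity of $v\mapsto\abs{v}$ in $H^1$, which your dominated-convergence argument reproves. On $\{w_*^{(j)}=0\}$ you need $\nabla w_*^{(j)}=0$ a.e.\ and $\abs{\nabla\abs{w_k^{(j)}}}\le\abs{\nabla w_k^{(j)}}$ to get a.e.\ convergence there.

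Where you genuinely differ is in identifying the limits as eigenfunctions. The paper writes $\Hamiltonian^{\Outer}(\beta_*)w_k^{(j)}=\lambda_j(b_k)w_k^{(j)}$ in $\Outer$ and uses elliptic estimates away from $\partial\Inner$ to get bounds in $H^2(\Outer_\varepsilon)$. It then passes to the limit in the equation, so each $w_*^{(j)}$ is identified separately, with eigenvalue $\lim_k\lambda_j(b_k)=\lambda_j^{\Outer}(\beta_*)$. Lemma~\ref{lem:asyequi-ON} is used only to know the limits are nonzero and orthonormal.

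You argue purely variationally. Lower semicontinuity gives $\Quadraticform^{\Outer}_{\beta_*}[w_*^{(j)}]\le\lambda_j^{\Outer}(\beta_*)$, and orthogonality plus induction on $j$ forces $w_*^{(j)}$ to be a minimizer on the orthogonal complement of $w_*^{(1)},\dots,w_*^{(j-1)}$. This needs no elliptic regularity. It also delivers directly the equality $\Quadraticform^{\Outer}_{\beta_*}[w_*^{(j)}]=\lambda_j^{\Outer}(\beta_*)$ on which your squeeze rests. The price is the induction, together with the (correct) fact that the complement of orthonormal eigenfunctions for $\lambda_1^{\Outer}(\beta_*),\dots,\lambda_{j-1}^{\Outer}(\beta_*)$ has bottom $\lambda_j^{\Outer}(\beta_*)$ even with multiplicities. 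The bound you derive on the whole span $S_*^j$ is never used.

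Both routes can be shortened. Take $v\in H^1(\Outer)$ with $v|_{\partial\Inner}=0$, and test the eigenvalue equation of $u_k^{(j)}$ on $\Domain$ against $\Unitary^{p_k}v$ extended by zero to $\Inner$. This gives $\innerproduct{(-\ii\nabla-\Ab_*)w_k^{(j)},(-\ii\nabla-\Ab_*)v}_{L^2(\Outer)}=\lambda_j(b_k)\innerproduct{w_k^{(j)},v}_{L^2(\Outer)}$. The weak $H^1$ limit then yields the weak eigenvalue equation for $w_*^{(j)}$ immediately, with neither $H^2$ bounds nor induction.
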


Another consequence of Proposition~\ref{prop:asyequi} concerns the behavior of
higher energy levels. In fact, Proposition~\ref{prop:asyequi} can be extended to
eigenvalues whose index grows with the flux (that is, $n\to\infty$ as
$b\to\infty$). More precisely, for a given $\lambda>0$, we define
\begin{equation}\label{eq: def Ilambdab}
  \mathcal I_\lambda(b)
  =
  \{n\in\naturalnumbers\colon\lambda_n^{\Outer}(b)<\lambda b\}.
\end{equation}
We will need the de Gennes constant $\Theta_0\in(\frac12,1)$, whose definition is
recalled in Appendix~\ref{sec:dG}. In particular, we will assume that
$\lambda<\Theta_0$. This condition is related to the transition between boundary
and bulk magnetic states. Indeed, $\Theta_0 b$ is the natural energy scale
associated with boundary localization for the Neumann magnetic Laplacian.
Eigenfunctions with energy below this threshold remain localized near the
boundary of $\Outer$, which prevents penetration into the strongly magnetic region
$\Inner$. As a consequence, the comparison with $\Hamiltonian^{\Outer}(b)$ remains valid
uniformly for $n\in\mathcal {I}_\lambda(b)$.

Using Proposition~\ref{prop:asyequi}, Dirichlet--Neumann bracketing, and the Weyl
law for the Laplacian, we obtain the following result.

\begin{proposition}[Asymptotics of high energy levels]\label{prop:ev-high-N}
  Let $a\in\reals$ and $0<\lambda<\Theta_0$ be fixed. Let
  $\mathcal{I}_\lambda(b)$ be as in \eqref{eq: def Ilambdab}. As $b\to+\infty$,
  the following holds
  \begin{equation}\label{eq: weyl n}
    \sup_{n\in\mathcal I_\lambda(b)}\abs {\frac{\lambda_n(b)}{\lambda_n^{\Outer}(b)}-1 }\to 0
    \text{ as }b\to+\infty.
  \end{equation}
\end{proposition}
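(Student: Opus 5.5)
The upper bound is immediate: extending eigenfunctions of \(\Hamiltonian^{\Outer}(b)\) by zero gives \(\lambda_n(b)\le\lambda_n^{\Outer}(b)\) for every \(n\). It therefore suffices to prove a lower bound of the form
\[
\lambda_n(b)\ \ge\ (1-\epsilon(b))\,\lambda_n^{\Outer}(b)-\epsilon(b)
\qquad\text{for all } n\in\mathcal I_\lambda(b),
\]
with \(\epsilon(b)\to 0\). For the ratio we also need \(\lambda_n^{\Outer}(b)\) bounded below uniformly. Via the gauge reduction of Lemma~\ref{lem:HHOO} the operators \(\Hamiltonian^{\Outer}(b)\) lie in a compact family, so \(\lambda_1^{\Outer}(b)\ge c_0>0\) by continuity and positivity (Dirichlet condition on \(\partial\Inner\)). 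This bound lets us absorb an additive error \(\epsilon(b)\) into a relative one.

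For the lower bound I would use an IMS localization with a partition of unity \(\chi_1^2+\chi_2^2=1\) at a \(b\)-dependent scale. Take \(\chi_1\) supported in \(\Inner\cup\{x\in\Outer:\dist(x,\partial\Inner)<2\delta\}\) and \(\chi_2\) supported in \(\Outer_\delta\), with \(\abs{\nabla\chi_i}\le C/\delta\). Then
\[
\Quadraticform[\psi]
=\Quadraticform[\chi_1\psi]+\Quadraticform[\chi_2\psi]-\sum_i\norm{\abs{\nabla\chi_i}\psi}^2
\ \ge\ \Quadraticform[\chi_1\psi]+\Quadraticform^{\Outer}[\chi_2\psi]-\frac{C}{\delta^2}\norm{\psi}^2 .
\]
The term \(\chi_2\psi\) lies in the Dirichlet form domain of \(\Hamiltonian^{\Outer}(b)\). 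For the term \(\chi_1\psi\), which lives in \(\Inner\) plus a thin collar, I would show a lower bound of order \(\Theta_0 b\) (up to \(o(b)\)). On \(\Inner\) this follows from the constant-field asymptotics \cite[Theorem~8.11]{FH-b}, exactly as in Lemma~\ref{lem:concentration}. The collar carries only the weak field \(a\), but it is thin, so it should not spoil the bound.

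This collar estimate is the \textbf{main obstacle}. In the collar the field is weak, but the Dirichlet-type confinement to width \(2\delta\) together with the large field in the adjacent \(\Inner\) should give energy \(\ge\min(\Theta_0 b,\ c\delta^{-2})(1-o(1))\) if \(\delta=b^{-1/2}\cdot M\), with \(M\to\infty\) slowly. I would first try a second localization across \(\partial\Inner\) at scale \(b^{-1/2}\): there the model is a step field (\(b\) on one side, \(a\ll b\) on the other), whose bottom of spectrum near a flat interface, after rescaling, exceeds \(\Theta_0 b\). In fact it tends to the half-space Dirichlet-type value \(b\), since the weak side behaves like a wall. The localization error \(C/\delta^2=Cb/M^2=o(b)\) is acceptable. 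Because \(\lambda<\Theta_0\), this yields \(\Quadraticform[\chi_1\psi]\ge(\lambda+\eta)b\,\norm{\chi_1\psi}^2\) for some \(\eta>0\) and large \(b\).

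With this estimate in hand, I would conclude by operator bracketing. The form \(\Quadraticform+Cb M^{-2}\) dominates the direct sum \(\Hamiltonian_1\oplus\Hamiltonian^{\Outer}(b)\) restricted to localized functions, where \(\Hamiltonian_1\) has spectrum above \((\lambda+\eta)b\). Hence, counting eigenvalues below \(\lambda b\), we get \(\lambda_n(b)+o(b)\ge\lambda_n^{\Outer}(b)\) for \(n\in\mathcal I_\lambda(b)\). This is too weak when \(\lambda_n^{\Outer}\) is \(O(1)\), so I would refine the comparison by replacing \(\Hamiltonian^{\Outer}(b)\) with the Dirichlet problem on \(\Outer_\delta\). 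That problem differs from \(\Hamiltonian^{\Outer}(b)\) only through a domain perturbation of width \(\delta\to 0\), and the errors \(C/\delta^2\) are relative to \(b\). To handle small and large \(n\) together, I would split the range. For \(\lambda_n^{\Outer}(b)\le K\) with fixed \(K\), the number of such \(n\) is bounded, by periodicity and the compactness argument of Proposition~\ref{prop:asyequi}, which gives \(o(1)\) errors uniformly. For \(\lambda_n^{\Outer}(b)\ge K\), I would apply the Weyl law: eigenvalue counting functions of \(\Hamiltonian^{\Outer}\) on \(\Outer\) and on \(\Outer_\delta\) with bounded fields both have asymptotics \(\abs{\Outer}E/4\pi\). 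Since their difference is \(O(\delta E)+o(E)\), the relative error \(\lambda_n/\lambda_n^{\Outer}-1\) tends to \(0\) as \(K\to\infty\) and \(\delta\to0\), uniformly for \(n\in\mathcal I_\lambda(b)\).
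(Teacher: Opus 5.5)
There is a genuine gap in the decoupling step. The estimate $\Quadraticform[\chi_1\psi]\ge(\lambda+\eta)b\norm{\chi_1\psi}^2$ is false. The support of $\chi_1$ includes the collar $\{x\in\Outer:\dist(x,\partial\Inner)<2\delta\}$. There the field is the fixed constant $a$, and after the gauge change of Lemma~\ref{lem:HHOO} the vector potential is bounded uniformly in $b$. Take a function $f(t)$ times a suitable tangential phase, where $f$ is a Dirichlet profile on $(0,2\delta)$ extended by zero into $\Inner$. Its energy is of order $\delta^{-2}=b/M^2=\ordo(b)$. Your own bound $\min(\Theta_0 b,c\delta^{-2})$ already equals $cb/M^2$ once $M\to\infty$, so it cannot yield $(\lambda+\eta)b$.

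The step-model heuristic is also reversed. With fields $1$ and $z=a/b\to0$, the bottom of the model is $\beta_z$, which equals $z$ for $0<z<1$ (Appendix~\ref{sec:ms}) and so tends to $0$. Low-energy states live on the weak-field side; it is the strong side that acts as a wall. Independently, the IMS error $C\delta^{-2}$ is an additive shift of size $b/M^2$. This is not small relative to $\lambda_n^{\Outer}(b)$ in the range $K\le\lambda_n^{\Outer}(b)\ll b/M^2$. Comparing Weyl laws on $\Outer$ and $\Outer_\delta$ controls the change of domain but not this shift, so your split into bounded and large $n$ does not close.

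The repair is to avoid any localization error and cut exactly at $\partial\Inner$ by Dirichlet--Neumann bracketing, as the paper does: $\Hamiltonian^{\Inner,N}(b)\oplus\Hamiltonian^{\Outer,N}(b)\le\Hamiltonian(b)\le\Hamiltonian^{\Inner,D}(b)\oplus\Hamiltonian^{\Outer,D}(b)$. The Neumann problem in $\Inner$ has ground energy $\Theta_0 b(1+\ordo(1))$. This, not a collar estimate, is where $\lambda<\Theta_0$ enters. For $\Lambda\le\lambda b$ it gives $N(\Hamiltonian^{\Outer,D}(b),\Lambda)\le N(\Hamiltonian(b),\Lambda)\le N(\Hamiltonian^{\Outer,N}(b),\Lambda)$. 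After gauge reduction to the base interval, both outer operators are comparable to the Dirichlet and Neumann Laplacians on $\Outer$ up to $\Ordo(\Lambda^{1/2})$. These Laplacians have the same Weyl leading term $\abs{\Outer}\Lambda/4\pi$, uniformly in $b$. Hence $\lambda_n(b)\sim\lambda_n^{\Outer}(b)\sim 4\pi n/\abs{\Outer}$ uniformly as $n\to\infty$ within $\mathcal I_\lambda(b)$. Bounded $n$ is handled by Proposition~\ref{prop:asyequi} together with your uniform lower bound on $\lambda_1^{\Outer}$. With this replacement, your overall structure (bounded $n$ via Proposition~\ref{prop:asyequi}, large $n$ via Weyl asymptotics) coincides with the paper's.
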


For the convenience of the reader, we present the proofs of
Propositions~\ref{prop:conv-ef} and \ref{prop:ev-high-N} in
Appendix~\ref{app:proofs}.

\section{The subcritical regime}\label{sec:subcritical}

\subsection{Introduction}

We consider in this section \(\OuterB(b) =  b ^ {\varrho}\) for some \(0 < \varrho < 1\). This
means that the magnetic field \(B\) is now given by
\begin{equation}\label{eq: mag field subcrit}
  B(x)
  =
  \begin{cases}
    b          & x \in \Inner,\\
    b ^ {\varrho} & x \in \Outer.
  \end{cases}
\end{equation}
The corresponding flux \(\Phi\) through the domain \(\Domain\) is given by \(\Phi
= b\measure{\Inner}/2\pi + b ^ {\varrho}\measure{\Outer}/2\pi\). In particular the
fluxes through both the inner domain \(\Inner\) and the outer domain \(\Outer\)
will become infinite as \(b \to +\infty\), but the flux through the inner part is
still considerably stronger, and that will in fact be sufficient to enforce
oscillations for the disk but not for generic domains.

The main goal of this section is to give a proof of Theorem~\ref{thm:mainsubcritical}. To do
this we will first discuss the asymptotic behavior of the first eigenvalue
\(\lambda _ 1(b)\) of \(\Hamiltonian(b)\), as \(b \to +\infty\). Once that
is settled, we will be able to discuss the monotonicity question.

\subsection{Asymptotic behavior of the first eigenvalue}

We first give a one-term asymptotic expansion for the first eigenvalue, valid for
any domain \(\Domain\). We recall that \(\Theta _ 0 \approx 0.59\) is
the spectral constant from the de Gennes problem, see Appendix~\ref{sec:dG}.

We show that the leading term in the expansion for $\lambda_1(b)$ as
$b\rightarrow \infty$ is given by $\Theta_0 b^\varrho$. This behavior can be heuristically understood from the fact that low-energy states prefer to localize in the region where the magnetic field is weaker, namely the outer
region $\Outer$, where the field has strength $b^\varrho$. Since $0<\varrho<1$, the magnetic
field in $\Outer$ is much smaller than the field $b$ in $\Inner$, which acts as an energetic barrier.

\begin{proposition}\label{prop:subcriticaleig}
Let $0<\varrho<1$ and let the magnetic field be as in \eqref{eq: mag field subcrit}.  The lowest eigenvalue \(\lambda_1(b)\) of $\Hamiltonian(b)$ satisfies, as \(b\to+\infty\)
  \[
    \lambda_1(b) = \Theta_0 b^{\varrho} + \ordo(b ^ {\varrho}).
  \]
\end{proposition}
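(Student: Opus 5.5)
We prove matching upper and lower bounds of the form $\Theta_0 b^{\varrho}(1+o(1))$.

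\emph{Upper bound.} We build a trial state localized near a point $x_0\in\partial\Domain$, at distance of order one from $\closure{\Inner}$, where the field equals $b^{\varrho}$. In boundary coordinates (tangential $s$, normal distance $t$) near $x_0$, we choose a gauge so that, up to a smooth phase, the potential is $b^{\varrho}(-t+\Ordo(t^2),0)$ plus a locally gradient term. The inner contribution $b\Fb_{\Inner}$ is curl-free near $x_0$, so it can be removed locally by a gauge transformation on a simply connected neighborhood. The trial function is
\[
\psi(s,t)=\chi(s)\chi(t)\,\varphi_0\bigl(b^{\varrho/2}t\bigr)\,\ee^{\ii \xi_0 b^{\varrho/2}s}\times(\text{gauge phase}),
\]
with $\chi$ a cutoff on a scale $b^{-\delta}$, $0<\delta<\varrho/2$, and $\xi_0=\sqrt{\Theta_0}$ the de Gennes minimizer. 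The standard computation (as in \cite[Chapter~8]{FH-b}) then gives $\Quadraticform[\psi]\le(\Theta_0 b^{\varrho}+\Ordo(b^{\varrho/2}+b^{2\delta}))\norm{\psi}^2$. Since the support stays in $\Outer$, the field $b$ in $\Inner$ never enters.

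\emph{Lower bound.} We use an IMS localization with a partition of unity $\sum\chi_j^2=1$ on a scale $R=b^{-\delta}$. The localization error is $\Ordo(R^{-2})=\ordo(b^{\varrho})$ provided $\delta<\varrho/2$. Each $\chi_j$ is one of three types.
\begin{itemize}
\item[(1)] Supported in $\Outer$ away from $\partial\Domain$: the local energy is at least $b^{\varrho}\norm{\chi_j u}^2$, by the bulk bound $\int|(-\ii\nabla-\Ab)v|^2\ge\int B|v|^2$ for compactly supported $v$.
\item[(2)] Supported near $\partial\Domain$: the local field is constant, equal to $b^{\varrho}$, so the Neumann half-plane bound with curvature corrections gives at least $(\Theta_0 b^{\varrho}-Cb^{\varrho}R-Cb^{\varrho/2})\norm{\chi_j u}^2$.
\item[(3)] Meeting $\closure{\Inner}$, hence away from $\partial\Domain$: the same bulk inequality gives at least $\int B|\chi_j u|^2\ge b^{\varrho}\norm{\chi_j u}^2$, since $B\ge b^{\varrho}$ everywhere because $b\ge b^{\varrho}$.
\end{itemize}
The bulk inequality $\Quadraticform[v]\ge\int B|v|^2$ holds for compactly supported $v$ with a piecewise constant, nonnegative $B$. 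It follows from $\Quadraticform[v]=\norm{(\partial_1-\ii A_1+\ii(\partial_2-\ii A_2))v}^2+\int B|v|^2$ after integration by parts, which is valid since $B\in L^\infty$ and $\Ab$ is Lipschitz.

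Summing over $j$ and using $\Theta_0<1$ gives $\lambda_1(b)\ge\Theta_0 b^{\varrho}-\ordo(b^{\varrho})$. The proposition follows from the two bounds.

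The main obstacle is the boundary estimate in type (2): it requires the standard constant-field Neumann lower bound on a curved boundary piece, including the gauge reduction. It is routine but needs the cutoff scale chosen so that the errors $b^{\varrho}R$, $R^{-2}$ and $b^{\varrho/2}$ are all $\ordo(b^{\varrho})$, which $0<\delta<\varrho/2$ ensures. No sign issue arises with $\Inner$, because the inner field only raises the energy.
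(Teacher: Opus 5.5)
Your upper bound is the same as the paper's: a de Gennes boundary state in $\Outer$ near $\partial\Domain$, with the curl-free part $b\Fb_{\Inner}$ gauged away locally. Your lower bound, however, takes a genuinely different route.

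\textbf{The paper's lower bound.} The paper never localizes across $\partial\Inner$. It splits $\Quadraticform[u]$ into its integrals over $\Inner$ and $\Outer$ and treats $u|_{\Inner}$ and $u|_{\Outer}$ as unconstrained $H^1$ functions. In effect this imposes artificial Neumann conditions on $\partial\Inner$, which can only lower the energy. It then applies the global constant-field bound $\Theta_0B-CB^{3/4}$ of \cite[Proposition~8.2.2]{FH-b} in each piece, with $B=b^{\varrho}$ in $\Outer$ and $B=b$ in $\Inner$. Since $\Theta_0 b\gg\Theta_0 b^{\varrho}$, this gives $\lambda_1(b)\ge\Theta_0 b^{\varrho}-\Ordo(b^{3\varrho/4})$ at once.

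\textbf{Your lower bound.} You keep $u$ intact across the interface and run the IMS localization yourself. Every cell meeting $\closure{\Inner}$ is disposed of with the elementary bulk inequality $\Quadraticform[v]\ge\int B|v|^2$, together with $B\ge b^{\varrho}$ and $\Theta_0<1$.

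\textbf{Comparison.} Your route needs nothing about the inner region beyond $b\ge b^{\varrho}$: no Neumann problem in $\Inner$ and no analysis at the interface. It also shows that cells near $\partial\Inner$ cost at least the bulk value $b^{\varrho}$. The price is that you must redo the boundary localization near $\partial\Domain$, which is exactly what the cited proposition packages. The paper's argument is therefore shorter and comes with an explicit remainder.

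\textbf{One detail in your type (2) bound.} The listed errors $Cb^{\varrho}R+Cb^{\varrho/2}$ omit the cost of replacing the boundary-coordinate potential $-b^{\varrho}(t-\kappa t^2/2)$ by the linear de Gennes one. In the usual argument this costs $\eta b^{\varrho}+\eta^{-1}b^{2\varrho}R^4$, i.e.\ $b^{3\varrho/2}R^2$ after optimizing $\eta$. That is $\ordo(b^{\varrho})$ only if $\delta>\varrho/4$, so your range $0<\delta<\varrho/2$ is too wide. There are two easy fixes:
\begin{itemize}
\item take $\varrho/4<\delta<\varrho/2$, e.g.\ $\delta=3\varrho/8$, which recovers the paper's $\Ordo(b^{3\varrho/4})$;
\item or obtain the type (2) bound by a local gauge change on the simply connected cell and then apply the global constant-field estimate of \cite[Proposition~8.2.2]{FH-b} on $\Domain$ with field $b^{\varrho}$; this imposes no lower restriction on $\delta$.
\end{itemize}
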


\begin{proof}
  We start with the lower bound. Let \(u \in H^1(\Domain)\) with $\|u\|_{L^2(\Domain)} = 1$. We write
  \[
    \Quadraticform[u] = \int_{\Domain} |(-\ii\nabla - \Ab)u|^2 = \int_{\Inner} |(-\ii\nabla - \Ab)u|^2 + \int_{\Outer} |(-\ii\nabla - \Ab)u|^2,
  \]
  and we estimate each of the two integrals above separately. By \cite[Proposition~8.2.2]{FH-b} there exists \(C > 0\)
  such that, for large \(b\),
  \[
    \norm {(-\ii\nabla-\Ab)u} _ {L ^ 2(\Outer)} ^ 2
    \geq
    (\Theta _ 0 b ^ {\varrho} - Cb ^ {3\varrho/4})\norm {u} _ {L ^ 2(\Outer)} ^ 2
  \]
  and
  \[
    \norm {(-\ii\nabla-\Ab)u} _ {L ^ 2(\Inner)} ^ 2
    \geq
    (\Theta _ 0 b - Cb ^ {3/4})\norm {u} _ {L ^ 2(\Inner)} ^ 2.
  \]
Using that $b$ is large and that $\norm {u} _ {L^2(\Domain)} ^ 2 = \norm {u} _
{L^2(\Inner)} ^ 2 + \norm {u} _ {L^2(\Outer)} ^ 2 = 1$, we can deduce that
\[
    \Quadraticform[u] \geq \Theta_0 b^\varrho - Cb^{3\varrho/4} + (\Theta_0(b - b^\varrho) - C(b^{3/4} - b^{3\varrho/4}))\|u\|^2_{L^2(\Inner)} \geq   \Theta_0 b^\varrho - Cb^{3\varrho/4},
\]
where we also used that $0<\varrho<1$. Thus
  \[
    \lambda _ 1(b) \geq \Theta _ 0 b ^ {\varrho} - \Ordo(b ^ {3\varrho/4}).
  \]
  We now prove the matching upper bound. In order to do that, we construct a trial state that is localized close to the outer boundary $\partial \Domain$, inside $\Outer$.

Let $t = \dist(x, \partial\Domain)$ and let \(s\) be the tangential coordinate along $\partial \Domain$.  We choose a normalized trial state \(u\) as
  in~\cite[Eq. (8.4)]{FH-b}, which has the following structure
  \begin{equation}\label{eq:trial-state}
    u
    =
    c
    \varphi_0(b^{\varrho/2}t)
    \times
    \ee^{\ii\xi_0b^{\varrho/2}s}
    \times
    \chi(s,t)
    \times
    \text{gauge transformation},
  \end{equation}
  where \(\varphi_0\) is the eigenfunction of the de Gennes model
  (Appendix~\ref{sec:dG}), $\xi_0$ is the minimizing parameter in the de Gennes model (Appendix~\ref{sec:dG}), and $\chi$ is a smooth cut-off chosen so that the support of \(u\) is within \(\Outer\). The gauge transformation that we do not write explicitly above is used to reduce the operator to the half-line model to which $\Theta_0$ is connected. The constant $c$ guarantees that $u$ is normalized.  With this trial state we find, for large \(b\),
  that
  \[
    \lambda_1(b)
    \leq
    \Quadraticform[u]
    =
    \Quadraticform ^ {\Outer} _ b[u]
    \leq
    \Theta _ 0 b ^ {\varrho} + \ordo(b ^ {\varrho}).
  \]
  Combining the lower and upper bounds completes the proof.

\end{proof}

As a consequence of the first-term asymptotics we get an Agmon-type estimate
for the ground state, saying that as \(b\) becomes large, the ground state is
localized at the outer boundary \(\partial\Domain\).

\begin{lemma}\label{lem:subagmon}
Let $0<\varrho<1$ and let the magnetic field be as in \eqref{eq: mag field subcrit}. There exist positive constants
  \(C_1,b_1\) and \(\delta_1 < 1\) such that every normalized ground state \(u\)
  of \(\lambda_1(b)\) satisfies for all \(b\geq b_1\),
  \[
    \int_{\Domain}
      \Bigl(b^{-\varrho} \abs {(-\ii\nabla-\Ab)u}^2 + \abs {u}^2 \Bigr)
      \ee^{\delta_1 b ^ {\varrho/2} \dist(x,\partial\Domain)}
      \dd x
    \leq
    C_1.
  \]
\end{lemma}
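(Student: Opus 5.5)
We follow the standard Agmon argument (as in \cite[Chapter~8]{FH-b}), with the weight $\Phi(x)=\delta_1 b^{\varrho/2}\dist(x,\partial\Domain)$, suitably regularized so that it is Lipschitz with $\abs{\nabla\Phi}\le \delta_1 b^{\varrho/2}$. The starting point is the IMS-type identity for the ground state $u$ with eigenvalue $\lambda_1(b)$:
\[
  \Quadraticform[\ee^{\Phi/2}u] - \norm{\abs{\nabla\Phi}\ee^{\Phi/2}u}^2/4 = \lambda_1(b)\norm{\ee^{\Phi/2}u}^2 .
\]
It follows by integrating by parts against $\ee^{\Phi}\bar u$, and the Neumann condition makes the boundary term vanish. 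Combining this with Proposition~\ref{prop:subcriticaleig}, we get
\[
  \Quadraticform[\ee^{\Phi/2}u] \le \bigl(\Theta_0 b^\varrho + \ordo(b^\varrho) + \tfrac{\delta_1^2}{4} b^\varrho\bigr)\norm{\ee^{\Phi/2}u}^2 .
\]

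Next we need a localized lower bound for $\Quadraticform[v]$ with $v=\ee^{\Phi/2}u$. Introduce a partition of unity $\chi_1^2+\chi_2^2=1$, where $\chi_1$ is supported in $\{\dist(x,\partial\Domain)<2R b^{-\varrho/2}\}$ and $\chi_2$ in $\{\dist(x,\partial\Domain)>R b^{-\varrho/2}\}$, with gradients of size $\Ordo(R^{-1}b^{\varrho/2})$. The IMS formula gives
\[
  \Quadraticform[v]=\sum_i \Quadraticform[\chi_i v]-\sum_i\norm{\abs{\nabla\chi_i}v}^2 ,
\]
and the error is $\Ordo(R^{-2}b^\varrho)\norm{v}^2$. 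The key lower bound is for $\chi_2 v$, which is supported away from $\partial\Domain$. Such functions see only bulk behaviour, so $\Quadraticform[\chi_2 v]\ge \int B\abs{\chi_2 v}^2\ge b^\varrho\norm{\chi_2 v}^2$. This uses the estimate $\int\abs{(-\ii\nabla-\Ab)w}^2\ge\int B\abs{w}^2$ for compactly supported $w$, which applies because $B>0$ and $B\ge b^\varrho$ everywhere (for large $b$). For the boundary term we only use $\Quadraticform[\chi_1 v]\ge 0$, or better the lower bound $\ge(\Theta_0b^\varrho - Cb^{3\varrho/4})\norm{\chi_1v}^2$ from \cite[Proposition~8.2.2]{FH-b}, as used in the proof of Proposition~\ref{prop:subcriticaleig}.

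Putting these together gives
\[
  b^\varrho\bigl(1-\Theta_0-\tfrac{\delta_1^2}{4}-CR^{-2}-\ordo(1)\bigr)\norm{\chi_2 v}^2 \le \bigl(\tfrac{\delta_1^2}{4}+CR^{-2}+\ordo(1)\bigr)b^\varrho\norm{\chi_1 v}^2 .
\]
Since $\Theta_0<1$, fixing $R$ large and $\delta_1$ small makes the left coefficient positive. On $\supp\chi_1$ the weight is bounded by $\ee^{2R\delta_1}$, so $\norm{\chi_1 v}^2\le C$. Hence $\norm{\ee^{\Phi/2}u}^2\le C_1$. Feeding this back into the first inequality bounds $b^{-\varrho}\Quadraticform[\ee^{\Phi/2}u]$. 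Finally we use
\[
  \abs{(-\ii\nabla-\Ab)u}^2\ee^{\Phi}\le 2\abs{(-\ii\nabla-\Ab)(\ee^{\Phi/2}u)}^2+\tfrac12\abs{\nabla\Phi}^2\ee^\Phi\abs u^2 ,
\]
which converts this into the stated weighted gradient bound.

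The main obstacle is the bulk lower bound for $\chi_2 v$. Its support may cross $\partial\Inner$, where the field jumps, but the inequality $\int\abs{(-\ii\nabla-\Ab)w}^2\ge\int B\abs w^2$ holds for $w\in H^1_0(\Domain)$ with $B\in L^\infty$. It is proved by the factorization $\abs{(-\ii\nabla-\Ab)w}^2=\abs{(\partial_1-\ii A_1 \pm \ii(\partial_2-\ii A_2))w}^2\pm B\abs w^2+$divergence, after approximating $\Ab$ by smooth potentials. So the jump causes no difficulty, and only the margin $1-\Theta_0>0$ matters.
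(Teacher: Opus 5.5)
Your proposal is correct and is the argument the paper relies on. The paper omits the proof and points to the standard Agmon scheme of \cite[Section~8.2.3]{FH-b}: the weighted IMS identity, a boundary/bulk partition of unity at scale $b^{-\varrho/2}$, and the bulk bound $\int\abs{(-\ii\nabla-\Ab)w}^2\ge\int B\abs{w}^2\ge b^\varrho\norm{w}^2$ for $w$ vanishing on $\partial\Domain$. As in your argument, the margin $1-\Theta_0>0$ comes from the upper bound in Proposition~\ref{prop:subcriticaleig}, and the jump of $B$ across $\partial\Inner$ is harmless, a point the paper handles elsewhere by citing \cite[Lemma~1.4.1]{FH-b} and \cite[Proposition~5.1]{KW}.
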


We omit the proof, since it follows exactly the same scheme as for the uniform
magnetic field, see~\cite[Section~8.2.3]{FH-b}.\medskip

\subsection{The subcritical regime for the unit disk}

In this section, we analyze the subcritical regime when \(\Domain\) is the unit disk. By translation and scaling, this is the only disk that needs to be considered. In particular, we provide a more accurate asymptotic expansion for \(\lambda_1(b)\), involving the constants \(\Theta_0\), \(\xi_0\), and \(\varphi_0(0)\) introduced in Appendix~\ref{sec:dG}. This result will allow us to prove the strong non-diamagnetism stated in Theorem~\ref{thm:mainsubcritical}.

\begin{proposition}\label{prop:disk-subcritical}
  Suppose that $\Domain\subset\mathbb{R}^2$ is the unit disk. Let $0<\varrho<1$ and let the magnetic field be as in \eqref{eq: mag field subcrit}.
  There exist constants \(C_0\) and \(C_1\) such that if
  \begin{equation}\label{eq: deltab}
    \Delta_b
    \coloneq
    \inf_{m \in \integers}
    \left|m - \Flux - \xi_0 b ^ {\varrho/2} - C_0 \right|
  \end{equation}
  then, as \(b \to +\infty\), the lowest eigenvalue \(\lambda _ 1(b)\) of
  \(\Hamiltonian(b)\) satisfies
  \[
    \lambda_1(b)
    =
    \Theta_0 b ^ {\varrho}
    - \frac13\varphi_{0}(0)^2 b^{\varrho/2}
    + \xi_0\varphi_{0}(0)^2 \bigl((\Delta_b)^2 + C_1 \bigr)
    + \Ordo(b^{-\varrho/2}).
  \]
\end{proposition}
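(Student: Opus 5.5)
Since \(\Domain\) is the unit disk and \(\Inner\) is smooth but not necessarily concentric, the plan is to reduce to a radial problem near \(\partial\Domain\) using the Agmon estimate of Lemma~\ref{lem:subagmon}. The ground state lives in a layer of width \(b^{-\varrho/2}\) (up to logarithms) along \(\partial\Domain\), deep inside \(\Outer\). In an annulus \(\{1-\varepsilon_0<|x|<1\}\subset\Outer\) the field is the constant \(b^\varrho\). By Stokes, any admissible potential is gauge equivalent to the radial potential
\[
  \Ab_{\rm rad}=\Bigl(\frac{\Flux}{r}-\frac{b^\varrho(1-r^2)}{2r}\Bigr)\mathbf e_\theta .
\]
This holds because the circulation of \(\Ab\) along the circle \(|x|=r\) equals \(2\pi\Flux-b^\varrho\pi(1-r^2)\), which is independent of the position of \(\Inner\). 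Up to errors \(\Ordo(b^{-\infty})\), both from cut-offs (via Lemma~\ref{lem:subagmon}) and from the Dirichlet/Neumann condition imposed at \(r=1-\varepsilon_0\), \(\lambda_1(b)\) then equals the lowest eigenvalue of the magnetic Neumann Laplacian on the annulus with potential \(\Ab_{\rm rad}\). For the upper bound we take as trial state the annulus ground state multiplied by a cut-off. For the lower bound we use an IMS localization with a partition of unity adapted to \(\{r>1-\varepsilon_0/2\}\), combined with the Agmon decay.

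On the annulus we decompose into Fourier modes \(\ee^{-\ii m\theta}\), \(m\in\integers\). This gives the family of one-dimensional operators
\[
  -\frac1r\partial_r r\partial_r+\Bigl(\frac{m-\Flux}{r}+\frac{b^\varrho(1-r^2)}{2r}\Bigr)^2
\]
on \((1-\varepsilon_0,1)\), Neumann at \(r=1\), and \(\lambda_1(b)=\inf_m\mu(m,b)+\Ordo(b^{-\infty})\). Setting \(h=b^{-\varrho/2}\), \(t=(1-r)/h\), and \(m-\Flux=b^{\varrho/2}\xi\), the operator becomes \(b^\varrho\) times a perturbation of the de Gennes operator \(-\partial_t^2+(t-\xi)^2\), up to sign conventions, with corrections in powers of \(h\) coming from the curvature \(1\) of the unit circle. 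We then run the standard perturbative expansion in \(h\), as in the constant-field disk analysis \cite[Chapter~5, Section~8.3]{FH-b}, using the de Gennes facts \(\mu''(\xi_0)=\ldots\), the identity \(\mu''(\xi_0)/2=\ldots\), and the relation \(3C_1\Theta_0^{1/2}=\ldots\) expressed through \(\varphi_0(0)^2\) (Appendix~\ref{sec:dG}). This yields, uniformly for \(|\xi-\xi_0|\le h^{1/2}\) say,
\[
  \mu(m,b)=\Theta_0b^\varrho-\tfrac13\varphi_0(0)^2b^{\varrho/2}
  +\xi_0\varphi_0(0)^2\bigl(m-\Flux-\xi_0b^{\varrho/2}-C_0\bigr)^2+\xi_0\varphi_0(0)^2C_1+\Ordo(b^{-\varrho/2}).
\]
Here the shift \(C_0\) collects the \(\Ordo(1)\) correction to the optimal \(\xi\) produced by curvature. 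The quadratic coefficient is \(\frac12\mu''(\xi_0)\) times \(b^{\varrho}h^2=1\), and \(\frac12\mu''(\xi_0)=\xi_0\varphi_0(0)^2\) by the Dauge--Helffer identity \(\mu''(\xi_0)=2\xi_0\varphi_0(0)^2\) (note \(\xi_0=\sqrt{\Theta_0}\) and \(\varphi_0(0)^2\) as given). For \(|\xi-\xi_0|\ge h^{1/2}\), the strict minimum of the de Gennes band function gives \(\mu(m,b)\ge\Theta_0b^\varrho+cb^{\varrho/2}\), so these modes are irrelevant. Minimizing over \(m\in\integers\) then produces exactly \((\Delta_b)^2\) as defined in \eqref{eq: deltab}.

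The main obstacle is the uniform three-term expansion of \(\mu(m,b)\) with remainder \(\Ordo(b^{-\varrho/2})\), uniformly in \(m\) in the relevant window. We plan to construct quasimodes by expanding the operator to order \(h^3\) relative to the leading scale. In the corrector hierarchy the second-order term produces \(C_1\) and the shift \(C_0\); this is where care with \(\varphi_0(0)\)-identities is needed. Matching lower bounds will come from the spectral gap of the de Gennes operator, via a Feshbach/Grushin reduction or simply from the min-max comparison with the quasimode together with the uniform gap \(\mu_2(\xi)-\mu_1(\xi)\ge c\). The remaining steps, namely the gauge reduction and the Agmon-based localization, are routine.
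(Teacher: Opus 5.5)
Your plan follows the paper's proof essentially step for step: Agmon localization (Lemma~\ref{lem:subagmon}) to a boundary annulus, a gauge change to the radial potential with $rA_\theta=\Flux-\tfrac12 b^{\varrho}(1-r^2)$ (the paper writes it as $\sigma\Ab_0+(\Flux-\sigma/2)\Ab_{\mathrm{AB}}$), separation into angular modes, and the one-dimensional de Gennes perturbation analysis, which the paper imports from \cite[Sections~4--5]{FS} and you propose to redo by quasimodes. One detail to tighten: the three-term formula with $\Ordo(b^{-\varrho/2})$ remainder holds only for $m$ within $\Ordo(1)$ of the optimal angular momentum, not uniformly for $|\xi-\xi_0|\le h^{1/2}$, because there the cubic Taylor term $b^{\varrho}(\xi-\xi_0)^3$ is of size $b^{\varrho/4}$; in the intermediate range use only the lower bound $\Theta_0b^{\varrho}-\tfrac13\varphi_0(0)^2b^{\varrho/2}+cM^2-C$, with $M$ the distance from $m$ to that optimum, which is all the minimization over $m$ needs.
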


\begin{proof}
  After a few reductions, the proof follows from Sections~4 and 5 in~\cite{FS}.

  We introduce for convenience \(\OuterB \coloneq b^{\varrho}\). By the Agmon estimate (Lemma~\ref{lem:subagmon}), the ground state $u$ satisfies
  \[
    \int_U e^{c\sqrt{\sigma}\dist(x,\partial U)}|u(x)|^2 \dd x \leq C.
  \]
  Hence, for any fixed $\eta>0$, the contribution of the region $\set{x \mid \dist(x,\partial \Domain)\geq \sigma^{-\frac{1}{2} + \eta}}$ to the energy is $\Ordo(\sigma^{-\infty})$.

  Therefore, $\lambda_1(b)$ coincides up to  $\Ordo(\sigma^{-\infty})$ with the lowest eigenvalue of the operator restricted to the annulus
  \[
  \mathcal{A}_\sigma := \{1- c\sigma^{-1/2 + \eta} < |x| <1\},
  \]
  with Dirichlet boundary conditions on the interior circle and Neumann boundary conditions on the outer circle, and with
  vector potential
  \[
    \Ab = b \Fb_{\Inner} + \sigma \Fb_{\Outer}.
  \]
  We now follow the idea that since up to errors of the order
  $\Ordo(\sigma^{-\infty})$, the energy is concentrated in the region $\Outer$,
  we can think of $\Ab$ as the vector potential associated with a
  constant magnetic field with intensity $\sigma$, i.e., $\Ab \sim (\sigma r/2)
  (-\sin \theta, \cos\theta)=: \sigma\Ab_0$. We can implement this
  rigorously by performing a gauge transformation. The error we incurred
  replacing $\Ab$ by $(\sigma r/2) (-\sin \theta, \cos\theta)$ in a
  rigorous way is an Aharonov-Bohm potential carrying most of the flux. Indeed,
  if we set $\widetilde{\Ab} = \Ab - \sigma\Ab_0$,
  then $\curl
  \widetilde{\Ab} = 0$ in $\mathcal{A}_\sigma$. It is always true that, in
  an annulus, any curl-free vector field can be written as
  \[
    \widetilde{\Ab} = \nabla\varphi + \alpha \Ab _ {\mathrm {AB}},
  \]
  where $\varphi$ is a scalar function, $\Ab _ {\mathrm {AB}} = r^{-1}(-\sin
  \theta, \cos\theta)$ 
  and $\alpha$ is a constant that measures the circulation.
  Since the circulation of $\Ab$ equals $2\pi \Phi$ and that of $\Ab _ 0$ is
  $\pi\sigma$, we set $\alpha = \Phi - \sigma/2$, which guarantees that the
  circulation of $\widetilde{\Ab}$ is $0$.

  After switching to polar coordinates and separating variables, we are left with
  a family of radial operators \(\Hamiltonian _ m(b)\), indexed by the angular
  momentum \(m\in \integers\). The operators \(\Hamiltonian _ m(b)\) act in the
  weighted space \(L^2((0,1),r\dd r)\) as
  \[
  \Hamiltonian _ m(b)
  = - \frac{\dd^2}{\dd r^2}
    - \frac{1}{r}\frac{\dd}{\dd r}
    + \frac{1}{r^2}(m - \sigma r a(r))^2,
  \]
  where we have
  \[
  ra(r) = \Flux/\sigma + (r - 1) + \Ordo\bigl((r-1)^2\bigr).
  \]
  This is the same family of operators studied in \cite[Sections~4 and~5]{FS}.
  In particular, it has been shown that the smallest eigenvalue of
  \(\Hamiltonian _ m(b)\) is given by
  \[
    \Theta_0 b^{\varrho}
    - \frac13\varphi_{0}(0)^2 b^{\varrho/2}
    + \xi_0\varphi_{0}(0)^2\bigl((\Delta_b)^2 + C_1 \bigr)
    + \Ordo (b^{-\varrho/2})
  \]
  if \(\mu_2 \coloneqq m - (\Flux + \xi_0\sigma^{1/2})\) satisfies
  \(\mu_2 = \Ordo(\sigma^{1/4})\).

  On the other hand, for some positive constants \(A\) and \(C <
  \varphi_{0}(0)^2/3\) and \(m\) satisfying \(|\mu_2| > A \sigma^{1/4}\), the
  smallest eigenvalue is greater than \(\Theta_0\sigma - C \sigma^{1/2}\) for
  \(\sigma\) sufficiently large.
\end{proof}

\subsection{The  subcritical regime for a generic domain}\label{subsec:subcritical-monotonicity}

Recalling that the domain of $\Hamiltonian(b)$ is independent of $b$ (see
\eqref{eq: def domain Hb}), using analytic perturbation theory (see
\cite[Section~2.1]{FH}), we deduce that for all \(b\) the following one-sided
derivative exists,
\[
  \lambda_{1,+}'(b)
  =
  \lim_{\varepsilon \to 0^+}
  \frac{\lambda_1(b + \varepsilon) - \lambda_1(b)}
       {\varepsilon}.
\]
It is convenient to introduce $\sigma = \sigma(b) =
b^{\varrho}$ as a parameter and look at the eigenvalue dependence on $\sigma$.
More precisely, we set
\begin{equation}\label{eq: def lambdasigma lambda'sigma}
    \lambda(\sigma):= \lambda_1(\sigma^{1/\varrho}), \qquad \lambda^\prime_+(\sigma) :=  \lim_{\varepsilon \to 0^+}
  \frac{\lambda(\sigma + \varepsilon) - \lambda(\sigma)}
       {\varepsilon}.
\end{equation}
Note that the existence of $\lambda^\prime_+(\sigma)$ can be proved as the
existence of $\lambda^\prime_{1,+}(b)$.

In the next result, we study the behavior of
$\lambda_{+}'(\sigma)$ as $\sigma\rightarrow \infty$, from which we can deduce
the monotonicity of $\lambda_1(b)$ for large $b$ as stated in part~(2) of
Theorem~\ref{thm:mainsubcritical}.

\begin{proposition}\label{prop:sub-generic-monotone}
  Suppose that $\Domain\subset \mathbb{R}^2$ is not a disk. Let $0<\varrho<1$ and let the magnetic field be as in \eqref{eq: mag field subcrit}. Let $\sigma = b^{\varrho}$ and let $ \lambda(\sigma)$, $\lambda^\prime_+(\sigma)$ be as in \eqref{eq: def lambdasigma lambda'sigma}.
  Then
  \[
    \liminf_{\sigma \to +\infty}\lambda_{+}'(\sigma) \geq \Theta_0 > 0.
  \]
\end{proposition}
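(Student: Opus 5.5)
The plan is to follow the scheme used for strong diamagnetism under a uniform field in \cite[Section~8.5]{FH-b}, adapted to the two-scale potential $\Ab=\sigma^{1/\varrho}\Fb_{\Inner}+\sigma\Fb_{\Outer}$. The argument has three ingredients: a variational inequality for the one-sided derivative, localization of the ground state near the points of maximal curvature of $\partial\Domain$, and removal of the large inner flux by a local gauge change. The non-disk assumption is used only in the second and third ingredients.

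\emph{Step 1: variational inequality for $\lambda'_+(\sigma)$.} By analytic perturbation theory (the operator domain is $\sigma$-independent), $\lambda'_+(\sigma)$ is the minimum of $\partial_\sigma\Quadraticform_\sigma[u]$ over normalized ground states $u$ at $\sigma$. For such $u$ and any $\delta>0$, write
\[
\Quadraticform_{\sigma+\delta}[u]=\Quadraticform_\sigma[u]+\delta\,\partial_\sigma\Quadraticform_\sigma[u]+\Ordo\bigl(\delta^2\norm{(\partial_\sigma\Ab)u}^2+\delta^2\abs{\langle(\partial_\sigma^2\Ab)u,(-\ii\nabla-\Ab)u\rangle}\bigr),
\]
and combine it with $\Quadraticform_{\sigma+\delta}[u]\ge\lambda(\sigma+\delta)$ and $\Quadraticform_\sigma[u]=\lambda(\sigma)$. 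This gives
\[
\lambda'_+(\sigma)\ \ge\ \frac{\lambda(\sigma+\delta)-\lambda(\sigma)}{\delta}-C\delta\,\norm{(\partial_\sigma\Ab)u}^2-(\text{similar remainders}).
\]
As it stands this is useless: $\partial_\sigma\Ab$ contains $\varrho^{-1}\sigma^{1/\varrho-1}\Fb_{\Inner}$, which is huge. Before applying Step~1, the inner flux must therefore be gauged away on the support of $u$.

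\emph{Step 2: localization and gauge.} By Lemma~\ref{lem:subagmon}, $u$ lives in a layer of width $\sigma^{-1/2+\eta}$ along $\partial\Domain$, inside $\Outer$. Next I will prove the refined two-term asymptotics
\[
\lambda(\sigma)=\Theta_0\sigma-C_1\kappa_{\max}\sigma^{1/2}+\Ordo(\sigma^{1/3})
\]
for the field $\sigma$ in $\Outer$, by transporting the uniform-field proof of \cite[Chapter~8]{FH-b} through the boundary localization. From this, together with a tangential Agmon estimate, I will deduce that $u$ is exponentially small away from an $\sigma^{-1/8}$-type neighbourhood of the set of maximal-curvature points. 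Since $\Domain$ is not a disk, curvature is not constant, so this set omits an arc of $\partial\Domain$. Hence, up to $\Ordo(\sigma^{-\infty})$ errors, $u$ is supported in a \emph{simply connected} subset $\mathcal{O}$ of the boundary layer avoiding a fixed arc. On $\mathcal{O}$ we have $\curl\Fb_{\Inner}=0$, so $\Fb_{\Inner}=\nabla\chi$ there. Replacing $u$ by $\ee^{-\ii\sigma^{1/\varrho}\chi}u$ (with a cut-off near the excluded arc) removes $\sigma^{1/\varrho}\Fb_{\Inner}$ entirely. This is exactly where the disk fails, since there the flux cannot be gauged away. Performing the same gauge change at $\sigma+\delta$, the problem reduces, up to $\Ordo(\sigma^{-\infty})$, to the uniform-field derivative computation, with $\partial_\sigma\Ab$ replaced by $\Fb_{\Outer}$. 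Adding a further local gradient, I may take $\Fb_{\Outer}$ to vanish at a chosen boundary point near the curvature maxima, and then
\[
\norm{\Fb_{\Outer}u}^2\lesssim\sigma^{-1/4}\qquad\text{(from the tangential scale)}.
\]

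\emph{Step 3: choice of $\delta$ (main obstacle).} The two error terms pull in opposite directions. Inserting the asymptotics of Step~2, the difference quotient equals
\[
\Theta_0+\Ordo\bigl(\sigma^{-1/2}+\sigma^{1/3}\delta^{-1}\bigr),
\]
while the remainder is $\Ordo(\delta\sigma^{-1/4})$. Both are $o(1)$ if $\sigma^{1/3}\ll\delta\ll\sigma^{1/4}$, an empty window. So the crude bounds do not close, and I expect this balancing to be the hardest step. What I would try first is to sharpen the localization: use Agmon estimates in $s$ at the scale $\sigma^{-1/8}$ given by the curvature well, and use that the tangential gauge part of $\Fb_{\Outer}$ is $\Ordo(t)$ in boundary coordinates, with $t\sim\sigma^{-1/2}$. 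This should give $\norm{\Fb_{\Outer}u}^2=\Ordo(\sigma^{-1})$ up to the normal-derivative contribution, which Feynman--Hellmann absorbs exactly. In parallel, I would improve the eigenvalue remainder to $\Ordo(\sigma^{1/4})$, or take it from \cite{FH-b} for generic curvature. With $\delta=\sigma^{1/3+\epsilon}$ and $\epsilon>0$ small, both error terms then tend to zero, giving $\liminf_{\sigma\to+\infty}\lambda'_+(\sigma)\ge\Theta_0$.
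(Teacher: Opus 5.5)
\textbf{Verdict: there is a genuine gap.} Your Step~1 and your gauge removal of $\sigma^{1/\varrho}\Fb_{\Inner}$ match the paper's Steps~3 and~1: the gauge acts on a simply connected region avoiding an arc of $\partial\Domain$. What is missing is the decisive estimate on $\norm{\Fb_{\Outer}u}$, and the route you give for it does not work.

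Making $\Fb_{\Outer}$ vanish at one boundary point only gives $\sigma^{-1/4}$. As you observe, that cannot close. It also presupposes tangential localization at scale $\sigma^{-1/8}$, which needs non-degenerate curvature maxima. It fails as well if the maximum is attained at several separated points. The hypothesis only excludes the disk, so $\{\kappa=\kappa_{\max}\}$ may be an arc or several points.

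Your proposed repair gives $\Ordo(\sigma^{-1})$ ``up to the normal-derivative contribution, which Feynman--Hellmann absorbs exactly.'' That is not right as stated. The remainder in your Step~1 is $\delta\norm{(\partial_\sigma\Ab)u}^2$, which contains all of $\abs{\Fb_{\Outer}}^2\abs{u}^2$. Feynman--Hellmann absorbs nothing in it, so every component of the potential must be small where $u$ lives.

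The missing ingredient is the boundary gauge of \cite[Lemma~8.5.5]{FH-b}. Take a simply connected $\widehat{\Outer}\subset\Outer$ containing the $\varepsilon_0$-neighbourhood of $n(\partial\Domain)$. One adds a $\sigma$-independent gradient so that $\abs{\widehat{\Fb}_{\Outer}}\le C\dist(x,\partial\Domain)$ on \emph{all} of $\widehat{\Outer}$, not just at one point. Near $\partial\Domain$, take the normal component zero in boundary coordinates. The tangential component is then a constant plus $\Ordo(t)$, and that constant is a gradient precisely because $\widehat{\Outer}$ is simply connected.

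Normal Agmon decay then gives $\norm{\widehat{\Fb}_{\Outer}\psi}^2=\Ordo(\sigma^{-1})$ directly. For the complement you only need the crude $\Ordo(\sigma^{-N})$ mass bound outside the $\varepsilon_0$-neighbourhood of $n(\partial\Domain)$; no tangential scale enters.

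With this bound there is no balancing problem. Take $\delta=\eta\sigma$ with $\eta$ fixed. The remainder is then $\Ordo(\eta)$ plus negligible $\widehat{\Fb}_{\Inner}$ terms. The difference quotient is $\Theta_0+o(1)$ using only the leading term $\Theta_0\sigma$; the two-term expansion serves only to produce localization near $n(\partial\Domain)$. Letting $\sigma\to+\infty$ and then $\eta\to0^+$ gives the claim. The refined remainders $\Ordo(\sigma^{1/3})$ and $\Ordo(\sigma^{1/4})$, and the window $\delta=\sigma^{1/3+\epsilon}$, are unnecessary.
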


\begin{proof}[Proof of Proposition~\ref{prop:sub-generic-monotone}]
  The proof relies on the Agmon estimate from
  Lemma~\ref{lem:subagmon} and follows the strategy of the proof
  of~\cite[Theorem~8.5.1]{FH-b}. We start by introducing a suitable gauge
  transformation. We split the proof in several steps.

  \noindent\textbf{Step 1 (Change of gauge).} Let $\kappa_{\max}$ be the maximum
  of the curvature $\kappa$ on $\partial\Domain$ and introduce the set
  $n(\partial\Domain):=\set{x\in\partial\Domain\colon \kappa(x) =
  \kappa_{\max}}$. Since $\Domain$ is not a disk, $n(\partial\Domain)\not =
  \partial\Domain$.

  We now define a global gauge transformation to express the
  vector potentials $\Fb_{\Outer}$ and $\Fb_{\Inner}$ appearing in
  \eqref{eq:def-F} in a convenient form. We first work in the region $\Outer$. Let
  $\widehat{\Outer} \subset \Outer$ be a simply connected domain such that
  \begin{equation}\label{eq:sub-generic-domaininclusion}
    \closure{\Domain} \setminus \widehat{\Outer}
    \subset
    \set{x \in \closure {\Domain}~:~\dist(x,n(\partial\Domain)) > \varepsilon _ 0},
  \end{equation}
  where the constant $\varepsilon_0$ is positive and depends on the domain
  $\Domain$. Then there exists a smooth function $\phi_W \in C^\infty(\Domain)$ such
  that $\widehat{\Fb}_{\Outer} = \Fb _ {\Outer} + \nabla \phi_{\Outer}$ satisfies
  \begin{equation}\label{eq:sub-generic-hatF-outer}
    \abs {\widehat{\Fb} _ {\Outer}}
    \leq C \dist(x,\partial\Domain)
    \text{ on }\widehat{\Outer}.
  \end{equation}
  This construction of $\phi_W$ is detailed in \cite[Lemma~8.5.5]{FH-b} and a
  closer look at the proof shows that $\phi_{\Outer}$ is independent of $\sigma$
  and that $\widehat{\Fb} _ {\Outer} = \Fb _ {\Outer}$ on $\Inner$ since $\phi_{\Outer}
  = 0$ in $\Inner$.

  In the region $\Inner$ instead, we define
  \[
    \widetilde{\phi}_{\Inner}(x) = - \int_{\gamma(x_0, x)} \Fb_{\Inner}\dd r \qquad x\in \widehat{\Outer},
  \]
  where $x_0\in \widehat{\Outer}$ is fixed and $\gamma(x_0,x)\subset \widehat{\Outer}$ is a
  smooth path joining $x_0$ and $x$. Since $\curl \Fb_{\Inner} = 0$ on
  $\widehat{\Outer}$, we deduce that $\widetilde{\phi}_{\Inner}$ is independent of the path, and also that
  \[
      \nabla\widetilde{\phi}_{\Inner}
      =
      - \Fb_{\Inner} \qquad \text{in } \widehat{\Outer}.
  \]
  Since we want to define a global gauge transformation, we extend now
  $\widetilde{\phi}_{\Inner}$ in a smooth way to $\Domain$ by using $\chi\in
  C^\infty(\Domain)$ such that $\chi \equiv 1$ in $\widehat{\Outer}$ and
  $\supp\chi \subset \Outer$. Thus, we set $\phi_{\Inner} =
  \widetilde{\phi}_{\Inner} \chi$. This choice guarantees that

  the vector potential $\widehat{\Fb}_\Inner = \Fb_{\Inner} +
  \nabla\phi_{\Inner}$ satisfies
  \begin{equation}\label{eq:sub-generic-hatF-inner}
    \widehat{\Fb}_{\Inner} = 0
    \quad\text{on } \widehat{\Outer}.
  \end{equation}
  Thus, we introduce the vector potential

  \begin{equation}\label{eq:Asigma}
    {\Ab}_\sigma
    :=
    \sigma^{1/\varrho}\widehat{\Fb} _ {\Inner}
    +
    \sigma \widehat{\Fb} _ {\Outer} ,
  \end{equation}
  which by construction is gauge equivalent to $\Ab$. For the rest of the proof
  we let $\psi$ be a normalized ground state of $(-\ii\nabla - {\Ab}_\sigma)^2$.
  The corresponding ground state energy is $\lambda(\sigma)$.

  \noindent\textbf{Step 2 (Asymptotics for $\lambda(\sigma)$ and localization of
  the ground state)}. We now prove that as \(\sigma \to
  +\infty\),

   \begin{equation}\label{eq:lambdasigmaexpansion}
      \lambda(\sigma) =
      \Theta_0\sigma
      -
      \frac13\varphi_0(0)^2\kappa_{\mathrm{max}}\sigma^{1/2}
      +
      \ordo(\sigma^{1/2}).
   \end{equation}

  Note that the first two terms in \eqref{eq:lambdasigmaexpansion} are consistent
  with the case of the disk. However, unlike
  Proposition~\ref{prop:disk-subcritical}, there is no flux effect coming from
  the interior because the corresponding ground states are localized near the
  boundary points of maximal curvature. This set is a loop when the domain is a
  disk. Under the assumptions of Proposition~\ref{prop:sub-generic-monotone}, the
  ground states concentrate near $\partial\Domain$, thanks to
  Lemma~\ref{lem:subagmon}, and we have

   \[
      \lambda(\sigma)
      =
      \lambda^{\Outer}(\sigma)
      +
      \Ordo(\sigma^{-\infty}),
    \]

  where $\lambda^{\Outer}(\sigma)$ is the lowest eigenvalue of $\Hamiltonian ^
  {\Outer}(\sigma)= (-\ii\nabla - \sigma \widehat{\Fb}_{\Outer})^2$ in
  $L^2(\Outer)$, with Dirichlet boundary condition on $\partial\Inner$. Since
  $\curl(\sigma \widehat{\Fb}_{\Outer}) = \sigma$ in $\Outer$ (see
  \eqref{eq:prop-A} and \eqref{eq:Asigma}), we get by~\cite[Theorem~8.3.2]{FH-b}
  \[
    \lambda^{\Outer}(\sigma)
    =
    \Theta_0\sigma
    -
    \frac13\varphi_0(0)^2 \kappa _ {\mathrm {max}}\sigma ^ {1/2}
    +
    \ordo(\sigma^{1/2}).
  \]
  Notice that the Dirichlet boundary condition on the interior boundary of
  $\Outer$ does not alter the conclusion in \cite[Theorem~8.3.2]{FH-b}, because
  the localization is close to $\partial\Domain$. We conclude
  that~\eqref{eq:lambdasigmaexpansion} is true.

  Knowing that \eqref{eq:lambdasigmaexpansion} holds, we can refine the
  localization of the ground state. In fact, by \cite[Proposition~8.3.3]{FH-b}
  we see that the ground states of $\lambda(\sigma)$ are localized near the set
  $n(\partial\Domain)$. This localization can roughly be described as follows
  \cite[Lemma~8.5.3 and Lemma~8.5.4]{FH-b}
  \begin{equation}\label{eq: localiz subcrit}
    \int_{\Domain}
      [\dist(x,\partial\Domain)]^N \abs{\psi}^2\dd x
    =
    \Ordo(\sigma^{-N/2}),
    \quad
    \int_{\set{\dist(x,n(\partial\Domain)) > \varepsilon_0}}
      \abs{\psi}^2\dd x
    =
    \Ordo(\sigma^{-N}),
  \end{equation}
  for fixed $N \in \naturalnumbers$, $\varepsilon_0>0$, and for $\psi$ a
  normalized ground state of $\lambda(\sigma)$.

  \noindent\textbf{Step 3 (Estimate for $\lambda^\prime_+(\sigma)$).}
  We now prove that for \(\varepsilon > 0\) it holds that
  \begin{equation}\label{eq: est lambda'+}
    \lambda_{+}'(\sigma)
    \geq
    \frac{\lambda(\sigma + \varepsilon) - \lambda(\sigma)}{\varepsilon}
    -
    \varepsilon\norm {\Rb\psi} ^ 2 + E,
  \end{equation}
  where
  \begin{equation}\label{eq:sub-generic-perturbation-R}
    \Rb
    =
    \frac {1}
          {\varepsilon}
          (\Ab _ {\sigma + \varepsilon} - \Ab _ {\sigma}),
  \end{equation}
  and
  \begin{equation}\label{eq:sub-generic-perturbation-E}
    E
    =
    2\re\innerproduct{(\Rb - \partial _ {\sigma}\Ab _ {\sigma})\psi,
                      (-\ii\nabla - \Ab _ {\sigma})\psi}.
  \end{equation}

  In order to prove \eqref{eq: est lambda'+}, we combine analytic perturbation
  theory with a comparison of the quadratic forms associated with
  $\lambda(\sigma)$ and $\lambda(\sigma + \varepsilon)$. We first select an
  analytic eigenbranch realizing the ground state energy near $\sigma$, and then
  apply the Feynman--Hellmann formula along this branch.

  Let \(\sigma>0\) be fixed. Since the form domain is independent of the
  parameter and the coefficients of the operator depend real-analytically on
  \(\sigma\), the family of magnetic Neumann Laplacians is an analytic family (in
  the sense of Kato) with compact resolvent. Hence, in a neighborhood of
  \(\sigma\), the eigenvalues can be parameterized by finitely many real-analytic
  branches \(\mu_j(\tau)\), with corresponding normalized real-analytic
  eigenfunctions \(\phi_j(\tau)\). Among the branches satisfying
  \(\mu_j(\sigma)=\lambda(\sigma)\), choose one and denote it by \(\mu(\tau)\),
  with corresponding normalized analytic eigenfunction denoted by \(\phi(\tau)\),
  in such a way that the Taylor expansion of \(\mu\) at \(\tau=\sigma\) is
  lexicographically minimal. By analyticity, after possibly shrinking the
  neighborhood, this branch coincides with the lowest eigenvalue for all
  \(\tau\ge \sigma\) close to \(\sigma\); that is, there exists \(\delta>0\) such
  that
  \[
    \mu(\tau)=\lambda(\tau), \qquad \tau\in[\sigma,\sigma+\delta).
  \]
  Set \(\psi=\phi(\sigma)\). Then \(\psi\) is a normalized ground state of
  \(\lambda(\sigma)\), and
  \[
    \lambda_+'(\sigma)=\mu'(\sigma).
  \]
  Applying the Feynman--Hellmann formula to the branch
  \((\mu(\tau),\phi(\tau))\), we have
  \[
    \lambda_{+}'(\sigma)
    =
    -\innerproduct{
      \partial_\sigma \Ab _ \sigma \psi,
      (-\ii\nabla - \Ab _ \sigma)\psi
      }
    -\innerproduct{(-\ii\nabla - \Ab _ \sigma)\psi,
      \partial_\sigma\Ab _ \sigma \psi
      }.
  \]
  Using the min-max principle for $\lambda(\sigma + \varepsilon)$, we have
  \[
    \begin{aligned}
      \MoveEqLeft[1]
      \lambda(\sigma+\varepsilon)-\lambda(\sigma)
      \\
      &
      \leq
      \innerproduct{ (-\ii\nabla - \Ab _ {\sigma + \varepsilon})\psi,
                     (-\ii\nabla - \Ab _ {\sigma + \varepsilon})\psi
                    }
      -
      \innerproduct{ (-\ii\nabla - \Ab_\sigma)\psi,
                     (-\ii\nabla - \Ab_\sigma)\psi
                    }
      \\
      &
      =
      \varepsilon^2 \norm {\Rb \psi} ^ 2
      -
      \varepsilon
      \left(
        \innerproduct { \Rb \psi,
                        (-\ii\nabla - \Ab_\sigma)\psi
                      }
        +
        \innerproduct {(-\ii\nabla - \Ab _ \sigma)\psi ,
                       \Rb \psi
                      }
      \right)
      \\
      &
      =
      \varepsilon^2 \norm {\Rb \psi} ^ 2
      +
      \varepsilon \lambda _ {+} '(\sigma)
      -
      \varepsilon E.
    \end{aligned}
  \]
  Dividing by $\varepsilon>0$ and rearranging, we get the claimed inequality.

  \noindent\textbf{Step 4 (Estimates for the gauge-transformed vector potential).}
  As \(\sigma \to +\infty\),
  \begin{equation}\label{eq:sub-generic-gauges}
    \norm {\widehat{\Fb}_ {\Outer}\psi}^2
    =
    \Ordo(\sigma^{-1})
    \quad\text{and}\quad
    \norm {\widehat{\Fb}_ {\Inner}\psi }^2
    =
    \Ordo(\sigma^{-1 - 2/\varrho}).
  \end{equation}

  In light of \eqref{eq:sub-generic-domaininclusion} and
  \eqref{eq:sub-generic-hatF-outer}, and the foregoing
  localization \eqref{eq: localiz subcrit},
  \begin{align*}
    \norm {\widehat{\Fb}_{\Outer}\psi} ^ 2
    &=
    \int_{\widehat{\Outer}}\abs {\widehat{\Fb} _ {\Outer}} ^ 2 \abs{\psi} ^ 2\dd x
    +
    \int_{\Domain\setminus\widehat{\Outer}}
      \abs {\widehat{\Fb} _ {\Outer}} ^ 2
      \abs {\psi} ^ 2 \dd x
  \\
  &\leq C\int_{\widehat{\Outer}} |\dist(x,\partial \Domain)|^2|\psi|^2 + \|\widehat{\Fb}_{\Outer}\|_\infty^2\int_{\{\dist(x, n(\partial \Domain))> \varepsilon_0\}} |\psi|^2 \dd x \leq C\sigma^{-1}.
  \end{align*}
  Finally, in light of~\eqref{eq:sub-generic-hatF-inner}, we have
  \[
    \norm {\widehat{\Fb} _ {\Inner} \psi} ^ 2
    =
    \int_{\Domain\setminus\widehat{\Outer}}
      |{\widehat{\Fb} _ {\Inner}}| ^ 2 \abs {\psi} ^ 2 \dd x.
  \]
  Using~\eqref{eq:sub-generic-domaininclusion} and the aforementioned decay of
  $\psi$ away from $n(\partial\Domain)$ (with $N \geq 1 + 2/\varrho$), we get
  \[
    \norm {\widehat{\Fb} _ {\Inner} \psi} ^ 2
   \leq \|\widehat{\Fb}_{\Inner}\|_\infty^2\int_{\{\dist(x, n(\partial \Domain))> \varepsilon_0\}} |\psi|^2 \dd x
    \leq C\sigma^{-1 - 2/\varrho}.
  \]
  This establishes \eqref{eq:sub-generic-gauges}.

  \noindent\textbf{Step 5 (Conclusions).} We now want to give
  a lower bound for $\lambda^\prime_+(\sigma)$. To start, we use that, by the
  expansion of \(\lambda(\sigma)\) in \eqref{eq:lambdasigmaexpansion}, we find
  that there exists a constant $C_0>0$ such that
  \[
    \frac{\lambda(\sigma + \varepsilon) - \lambda(\sigma)}{\varepsilon}
    \geq
    \Theta_0 - C_0\sigma^{-\frac{1}{2}}.
  \]
  We now bound the term $\varepsilon \|\mathbf{R}\psi\|^2$. By the definition of
  \(\Rb\) in \eqref{eq:sub-generic-perturbation-R}, it follows that
  \[
    \Rb
    =
    \biggl(
      \frac{(\sigma+\varepsilon)^{1/\varrho} - \sigma^{1/\varrho}}
           {\varepsilon}
    \biggr)
    \widehat {\Fb} _ {\Inner}
    +
    \widehat {\Fb} _ {\Outer}.
  \]
  Let \(\varepsilon = \eta\sigma\), where \(\eta \in (0,1)\). Then, by the mean
  value theorem,
  \begin{equation}\label{eq: mean value subcr}
    \frac{(\sigma + \varepsilon) ^ {1/\varrho} - \sigma ^ {1/\varrho}}{\varepsilon}
    \leq
    \frac{1}{\varrho} 2 ^ {1/\varrho - 1}\sigma ^ {1/\varrho - 1}.
  \end{equation}
  
  Consequently, there exists a constant \(C _ \varrho\) depending on $\varrho$, such
  that
  \begin{equation}\label{eq:sub-generic-normestimates}
    \norm {\Rb \psi}
    \leq
    C_\varrho\sigma^{1/\varrho - 1}
    \norm {\widehat{\Fb} _ {\Inner} \psi}
    +
    \norm {\widehat{\Fb}_ {\Outer}\psi},
  \end{equation}
  which implies, using \eqref{eq:sub-generic-gauges}, that for fixed $\varrho$,
  there exists a constant $C_1>0$ such that
  \[
    \varepsilon \|\mathbf{R}\psi\|^2 \leq C_1\sigma\eta (\sigma^{\frac{2}{\varrho} - 2} \|\Fb_{\Inner}\psi\|^2 + \|\Fb_{\Outer}\psi\|^2) \leq C_1\sigma^{-2} + C_1\eta.
  \]
  We now estimate $E$. To start, we use \(
    \partial _ {\sigma} \Ab _ {\sigma}
    =
    (\sigma ^ {1/\varrho-1}/\varrho)\widehat{\Fb} _ {\Inner}
    +
    \widehat{\Fb} _ {\Outer}
  \)
  to write \(E\) as
  \[
    E
    =
    2
    \biggl(
      \frac{(\sigma+\varepsilon)^{1/\varrho} - \sigma^{1/\varrho}}
           {\varepsilon}
      -
      \frac{\sigma^{1/\varrho - 1}}
           {\varrho}
    \biggr)
    \re\innerproduct { \widehat {\Fb}_{\Inner} \psi,
                       (-\ii\nabla - \Ab _ \sigma)\psi}.
  \]
  Using again \eqref{eq: mean value subcr}, we see that there exists a constant
  $\widetilde{C}_\varrho>0$ depending on $\varrho$ and an overall constant
  $C_2>0$ such that
  \[
  \abs {E}
    \leq
    \widetilde{C}_\varrho\sigma^{1/\varrho - 1}
    \norm {\widehat {\Fb} _ {\Inner}\psi}
    \sqrt{\lambda(\sigma)} \leq C_2\sigma^{\frac{1}{\varrho}- \frac{1}{2}} \norm {\Fb_{\Inner}\psi} \leq C_2\sigma^{-1},
  \]
  where we used \eqref{eq:sub-generic-gauges} and that $|\lambda(\sigma)| =
  \Ordo(\sigma)$ from \eqref{eq:lambdasigmaexpansion}. Combining all of
  these bounds and inserting them into \eqref{eq: est lambda'+}, we find that for each
  fixed \(\eta \in (0,1)\) there exists a constant $C>0$ independent of $\eta$
  and a threshold \(\sigma _ {\eta}\) such that for \(\sigma \geq \sigma _
  {\eta}\),
  \begin{equation}\label{eq:lb-der-mu}
      \lambda _ {+}'(\sigma)
      \geq
      \Theta _ 0
      -
      C \eta
      -
      C\sigma ^ {-2}
      -
      C\sigma ^ {-1}
  \end{equation}
  By first sending \(\sigma \to +\infty\), and then \(\eta \to 0 ^ {+}\), we get
  the claimed lower bound for \(\lambda _ {+}'(\sigma)\).
\end{proof}

\subsection{Proof of Theorem \ref{thm:mainsubcritical}}
In this section, we prove Theorem~\ref{thm:mainsubcritical}. From the asymptotic expansion in Proposition~\ref{prop:disk-subcritical}, we observe that the first two terms, $\sim b^\varrho+b^{\varrho/2}$,
are monotone functions of \(b\), whereas the term $\xi_0\varphi_{0}(0)^2\bigl((\Delta_b)^2+C_1\bigr)$ is oscillatory. Since \(0<\varrho<1\), the derivatives of the monotone terms vanish as \(b\to+\infty\). Consequently, their variation cannot compensate for the oscillations generated by \((\Delta_b)^2\). This leads to the non-monotonicity statement in Theorem~\ref{thm:mainsubcritical} when \(\Domain\) is a disk.
For a general domain, i.e., not a disk, the situation is different. Heuristically, the result is explained by the fact that the curvature has a strict maximum on \(\partial\Domain\). In this regime, the ground states localize near the points of maximal curvature, and this localization mechanism leads to the eventual monotonicity of \(\lambda_1(b)\); see~\cite[Section~8.5]{FH-b}.

\begin{proof}[Proof of Theorem \ref{thm:mainsubcritical}]

\textbf{Proof of (1) (Disk case).} In order to prove the non-monotonicity in the disk case, we prove that
  there exists an increasing sequence \((b _ k)\) such that \(b _ k \to +\infty\)
  and
  \[
    \lambda_1(b_k)
    <
    \lambda_1(b_{k + 1}),
    \quad
    \lambda_1(b_{k + 1})
    >
    \lambda_1(b_{k + 2}).
  \]
  From this, we can indeed conclude that \(b \mapsto \lambda _ 1(b)\) is not monotone in any unbounded
  interval of the form \((b _ 0,+\infty)\).
To construct such a sequence, we start by noticing that from Proposition \ref{prop:disk-subcritical}, we know that
\[
\lambda_1(b)
=
M(b)
+
\xi_0\varphi_{0}(0)^2(\Delta_b)^2
+
O(b^{-\varrho/2}),
\]
where we set
\[
M(b)
:=
\Theta_0 b^\varrho
-\frac13\varphi_{0}(0)^2 b^{\varrho/2}
+\xi_0\varphi_{0}(0)^2 C_1
\]
and $\Delta_b$ is defined as in \eqref{eq: deltab}.
Since $\Phi(b)
=
\frac{\measure{\Inner}}{2\pi}b+\frac{\measure{\Outer}}{2\pi}b^\varrho$,
the function
\[
f(b):=\Phi(b)+\xi_0 b^{\varrho/2}+C_0
\]
is continuous, strictly increasing, and satisfies $ f(b)\to+\infty$ as
$b\to+\infty$. Hence there exists an increasing sequence $(b_k)$, with \(b _ {k +
1} - b _ k\) bounded, such that
\[
  f(b_{2k})\in\integers,\qquad f(b_{2k+1})\in\integers + \frac12.
\]
Therefore
\[
  \Delta_{b_{2k}}=0,\qquad\Delta_{b_{2k+1}}=\frac12.
\]
Moreover,
\[
  M'(b)=\Theta_0\varrho b^{\varrho-1}-\frac{\varrho}{6}\varphi_{0}(0)^2 b^{\varrho/2-1}\to0
\]
as $b\to+\infty$, thus the variation of the non-oscillating terms between
consecutive points $b_k,b_{k+1},b_{k+2}$ becomes arbitrarily small for large $k$.
On the other hand, $(\Delta_{b_{2k+1}})^2-(\Delta_{b_{2k}})^2=1/4$. Hence, for
sufficiently large $k$,
\[
  \lambda_1(b_{2k})<\lambda_1(b_{2k+1}), \quad \text {and}\quad
  \lambda_1(b_{2k+1})>\lambda_1(b_{2k+2}).
\]

\noindent\textbf{Proof of (2) (Non-disk domain).}
 In this case, we can use  Proposition~\ref{prop:sub-generic-monotone} to
obtain that \(\sigma\mapsto \lambda(\sigma)\) is monotone increasing on
\([\sigma_0,+\infty)\) for \(\sigma_0 > 0\) sufficiently large, since \(\sigma
\mapsto \lambda(\sigma)\) is locally Lipschitz. In turn, we obtain that
\(b\mapsto \lambda_1(b)\) is monotone increasing on \([b_0,+\infty)\) with \(b_0
= \sigma_0^{1/\varrho}\), thereby proving the monotonicity statement of generic
\(\Domain\) in Theorem~\ref{thm:mainsubcritical}.
\end{proof}

\begin{remark}[The case of a sign-changing field]\label{rem:subcrit-sign-chan}
We remark that our methods allow us to deal with the case $\sigma(b)=-b^{\varrho}$, that is
\[
  B(x)=
  \begin{cases}
     b           & x\in \Inner,\\
    -b^{\varrho} & x\in \Outer.
  \end{cases}
\]
The flux becomes $\Phi=b\measure {\Inner}/2\pi - b^{\varrho}\measure {\Outer}/2\pi$
and is still dominated by the contribution from the inner domain $\Inner$. Modulo
slight adjustments of the proof, Proposition~\ref{prop:disk-subcritical} (with
the modified flux $\Phi$) and Proposition~\ref{prop:sub-generic-monotone} hold.
Thus,
\begin{itemize}
  \item For the disk, the lowest eigenvalue is not monotone and undergoes indefinite oscillations \cite[Section~6, Proof of Theorem~1.7]{FS}.
  \item For a generic domain, the lowest eigenvalue is monotone in a neighborhood of $+\infty$.
\end{itemize}
\end{remark}

\section{The critical regime}\label{sec:critical}

\subsection{Introduction}

We let \(a \neq 0\) and consider here \(\OuterB(b) = a b\), i.e.\ magnetic fields of the form
\begin{equation}\label{eq: mag field crit}
  B(x)
  =
  \begin{cases}
    b  & x \in \Inner,\\
    ab & x \in \Outer.
  \end{cases}
\end{equation}
Our ultimate aim is to prove Theorem~\ref{thm:maincritical}.
In contrast with the subcritical regime, the magnetic field now has the same
order of magnitude in both regions \(\Inner\) and \(\Outer\). As a consequence,
the leading order of the ground state energy is determined by a competition
between three different localization mechanisms:
\begin{itemize}
\item localization near the interface \(\partial\Inner\), governed by the
magnetic step model and producing the constant \(\hat\beta_a\),
\item localization near the outer boundary \(\partial\Domain\), producing the
effective energy \(|a|\Theta_0\), where \(\Theta_0\) is the de~Gennes constant already used in the subcritical regime (see Appendix~\ref{sec:dG}),
\item localization in the bulk of \(\Inner\), producing the energy of the magnetic Laplacian in the plane with constant magnetic field of intensity $b$.
\end{itemize}
Here, \(\hat\beta_a\) denotes the bottom of the spectrum of the magnetic step
operator introduced in Appendix~\ref{sec:ms}. The leading asymptotics of
\(\lambda_1(b)\) is therefore determined by the smallest among these three
effective energies. Depending on the value of \(a\), different localization
regimes occur.

\subsection{Asymptotic behavior of the first eigenvalue}

Just as in the subcritical regime, we start by giving a first-term asymptotic of
the lowest eigenvalue \(\lambda _ 1(b)\). This result will again imply a
localization result for the ground state.

\begin{proposition}\label{prop:criticaleig}
  Suppose that $a \neq 0$ and that \(\OuterB(b) = ab\).

  If $a < -1$, then, as \(b \to +\infty\),
  \[
    \lambda_1(b) = \hat{\beta} _ a b + \ordo(b).
  \]

  If $-1 \leq a < \Theta _ 0 ^ {-1}$ then, as \(b \to +\infty\),
  \[
    \lambda_1(b) = |a|\Theta_0 b + \ordo(b).
  \]

  If $a \geq \Theta _ 0 ^ {-1}$ then, as \(b \to +\infty\),
  \[
    \lambda_1(b) = b + \ordo(b).
  \]
\end{proposition}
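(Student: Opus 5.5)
The plan is to prove that, after dividing by $b$, $\lambda_1(b)$ converges to the minimum of three effective energies: the step energy $\hat\beta_a$ at the interface $\partial\Inner$, the boundary energy $|a|\Theta_0$ at $\partial\Domain$, and the bulk energy $1$ in $\Inner$. By the model properties in Appendix~\ref{sec:ms}, this minimum is $\hat\beta_a$ for $a<-1$, $|a|\Theta_0$ for $-1\le a<\Theta_0^{-1}$, and $1$ for $a\ge\Theta_0^{-1}$. For $a>0$, $\hat\beta_a$ is never strictly smaller than $\min(a\Theta_0,1)$. For $-1\le a<0$, one has $\hat\beta_a\ge |a|\Theta_0$, since the step with opposite signs has energy at least $|a|\Theta_0$ when $|a|\le 1$. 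For $a<-1$, one has $\hat\beta_a<|a|\Theta_0$ and $\hat\beta_a<1$. These comparisons are the model facts I will invoke from the appendix. The proof therefore reduces to a matching lower and upper bound of the form $\min\{\hat\beta_a,|a|\Theta_0,1\}\,b+\ordo(b)$.

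\textbf{Lower bound.} I will use an IMS localization with a partition of unity $\sum_j\chi_j^2=1$ on scale $b^{-3/8}$, which gives a localization error $\Ordo(b^{3/4})=\ordo(b)$. Each cutoff is of one of four types. If $\chi_j$ is supported in the interior of $\Inner$ or of $\Outer$, then the constant-field lower bound $\norm{(-\ii\nabla-\Ab)v}^2\ge |B|\norm{v}^2$ gives $b$, respectively $|a|b$. If it meets $\partial\Domain$, where the field is $ab$, then \cite[Proposition~8.2.2]{FH-b} (or the half-plane model after a local gauge change) gives $|a|\Theta_0 b-\Ordo(b^{3/4})$. If it meets $\partial\Inner$, I straighten the interface in local curvilinear coordinates, choose a local gauge, and compare with the magnetic step model of Appendix~\ref{sec:ms}. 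Freezing the curvature costs a relative error $\Ordo(b^{-3/8})$, and scaling gives $\hat\beta_a b-\ordo(b)$. Summing the pieces yields $\lambda_1(b)\ge \min\{\hat\beta_a,|a|,|a|\Theta_0,1\}\,b-\ordo(b)$. Since $\Theta_0<1$, the term $|a|$ is dominated by $|a|\Theta_0$.

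\textbf{Upper bound.} I will construct three trial states and take the best one for each $a$. The first is the de Gennes quasimode \eqref{eq:trial-state} with $\varrho=1$ and field $ab$, localized near $\partial\Domain$ inside $\Outer$, which gives $|a|\Theta_0 b+\ordo(b)$. For $a<0$, the Landau half-plane convention requires replacing it by its complex conjugate, or reflecting $\xi_0$; this is harmless. The second is a Gaussian Landau state of scale $b^{-1/2}$, cut off inside $\Inner$, which gives $b+\Ordo(b^{1/2})$. The third is a step-model ground state, obtained from a near-minimizing Fourier mode $\ee^{\ii\xi b^{1/2}s}\,f(b^{1/2}t)$ and cut off tangentially on scale $b^{-1/4}$ near a point of $\partial\Inner$, which gives $\hat\beta_a b+\ordo(b)$. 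If the step infimum is not attained, I use a near-minimizer $\xi$ and let the error tend to zero at the end.

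\textbf{Main obstacle.} The hardest step is the interface term in the lower bound. There one must check that the step-operator bound is uniform with respect to the local gauge, and in particular that the flux carried by $\Inner$ can be removed locally, which is possible since each cutoff sits in a simply connected patch. One must also control the curvature and frame errors at scale $b^{-3/8}$ so that they remain $\ordo(b)$. I would handle this exactly as in the magnetic step literature \cite{AHK}, using the estimate $\abs{\Ab-\Ab_{\mathrm{lin}}}\le Cb\,\abs{x-x_0}^2$ after a gauge change.
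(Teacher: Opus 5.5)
Your proposal is correct and follows essentially the paper's route. The paper also uses a partition of unity at scale \(b^{-3/8}\), with the bulk bound \(b\) in \(\Inner\), the de Gennes bound \(|a|\Theta_0 b\) near \(\partial\Domain\), and the magnetic-step bound \(\hat\beta_a b\) at \(\partial\Inner\); your splitting of the outer region into interior and boundary-touching cutoffs only refines this. The rest matches as well: the appendix comparisons of \(\hat\beta_a\), \(|a|\Theta_0\) and \(1\), plus matching trial states, where taking the best of your three states (with a bulk Landau Gaussian in \(\Inner\) for \(a\ge\Theta_0^{-1}\)) organizes the upper bound more cleanly than the paper's case list.
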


\begin{remark}[Continuity of the transitions]\label{rem:discussion-beta a}
The transition between the three regimes in
Proposition~\ref{prop:criticaleig} is continuous with respect to \(a\), due to
the properties of the spectral constant \(\hat\beta_a\) established in
Appendix~\ref{sec:ms}.
 In particular, the function
\(a\mapsto\hat\beta_a\) is continuous and
\[
\hat\beta_{a}=\Theta_0=|a|\Theta_0, \qquad \text {at } a = -1.
\]
Hence, the leading-order asymptotics match continuously at the transition points
$a=-1$. It is easy to see that there is a continuous transition also at $a=
\Theta_0^{-1}$.
\end{remark}

\begin{remark}[Proposition \ref{prop:subcriticaleig} and Proposition \ref{prop:criticaleig}]\label{rem:a tends to 0}
  The regime discussed in Section~\ref{sec:subcritical} corresponds formally to
  taking $|a|=b^{\varrho-1}\ll 1$. In this case, for both $a = \pm
  b^{\varrho-1}$, we have $-1<a<\Theta_0^{-1}$ and the asymptotics in
  Proposition~\ref{prop:criticaleig} are consistent with those obtained in
  Proposition \ref{prop:subcriticaleig}.
\end{remark}

\begin{proof}[Sketch of Proof of Proposition~\ref{prop:criticaleig}] The proof of Proposition~\ref{prop:criticaleig} follows the same scheme as for the
case of constant magnetic field (described in e.g. \cite[Section 8.2]{FH-b}),
with slight adjustments, where needed. We will describe the procedure briefly.
  Loosely speaking, the procedure is as follows: For the lower bound, one uses a
  partition of unity to compare with local model operators. In our setting, these
  will be the de Gennes model (Appendix~\ref{sec:dG}) or the model with
  discontinuous potential (Appendix~\ref{sec:ms}). For the upper bound, one
  constructs trial states localized in small regions, using Gaussians or the
  ground states of the models in Appendix~\ref{sec:pre}.

  With the help of a partition of unity, we will encounter working with functions
  \(u\) that have support in smaller domains \(\Omega_i\), $i=1,2,3$, belonging to one of the
  following three types:
  \begin{itemize}
    \item
      $\Omega_1 = \set{x\in\overline{\Outer}\colon \dist(x,\partial\Inner)>b^{-3/8}}$;
    \item
      $\Omega_2 = \set{x\in\Inner\colon \dist(x,\partial\Inner)>b^{-3/8}}$;
    \item
      $\Omega_3 = \set{x\in\Domain\colon \dist(x,\partial\Inner)<2b^{-3/8}}$.
  \end{itemize}
  The first type, $\Omega_1$, is completely in the closure of the outer domain \(\Outer\), the second, $\Omega_2$, in the
  inner \(\Inner\) and the third, $\Omega_3$, in both $\Outer$ and $\Inner$. Additionally, for the first two types,
  \(u\) will vanish on \(\{\dist(x,\partial\Inner)=b^{-3/8}\}\). By
  \cite[Proposition~8.2.2 and Lemma~1.4.1]{FH-b}, we get
  \[
    \norm {(-\ii\nabla - \Ab)u} _ {L ^ 2(\Omega)} ^ 2
    \geq
    \begin{cases}
      \bigl(\abs {a}\Theta_0 b+o(b)\bigr)\norm {u} _ {L ^ 2(\Omega)} ^ 2
      & \text{ in }\quad \Omega_1 \\
      b \norm {u} _ {L ^ 2(\Omega)} ^ 2
      & \text{ in }\quad \Omega_2
    \end{cases}
  \]
  For the domain \(\Omega_3\), we  compare with the model in Appendix~\ref{sec:ms}.
  We omit the details which can be found in~\cite[Section~4]{A}, and just mention
  the resulting lower bound,
  \[
    \norm {(-\ii\nabla - \Ab)u} _ {L ^ 2(\Omega)} ^ 2
    \geq
    \bigl(\hat{\beta} _ a b + o(b) \bigr)\norm {u} _ {L ^ 2(\Omega)} ^ 2.
  \]

  Summing up, and noticing that the errors introduced by the partition of
  unity are small in comparison, which can be proved as in \cite[Section 8.2]{FH-b} and \cite[Section~4]{A}, we get
  \[
    \lambda_1(b)\geq \min(|a|\Theta_0,1,\hat\beta_a)b+o(b)
  \]
  as required. Note that, by the properties mentioned in Appendix~\ref{sec:ms},
  $\hat\beta_a=\min(a,1)$  for $a>0$, and $\hat\beta_a> |a|\Theta_0$ for $-1<a<0$.

  An upper bound can be obtained by considering a trial state:

  \begin{itemize}
    \item
      The same trial state in \eqref{eq:trial-state} for $|a|<1$;
    \item
      A trial state like in \eqref{eq:trial-state} but adjusted by replacing
      $\sigma$ with $b$, for $a>1$;
    \item
      A trial state like in \eqref{eq:trial-state} but adjusted by replacing
      $\sigma$ with $b$, and by replacing $(\xi_0,\varphi_0)$ with the
      corresponding configuration for the model in Appendix~\ref{sec:ms}, for
      $a<-1$. Again, we omit the details and refer to \cite[Section~4]{A}.
  \end{itemize}

  The upper bounds one gets match the previously stated lower bounds.
\end{proof}

Given these asymptotic results on the eigenvalues, we again obtain Agmon
estimates. In this case we need to divide the situation into three cases,
depending on the relative sizes of the involved constants.

\begin{lemma}\label{lem:criticalagmon}
  Suppose that $a \neq 0$ is fixed.
  \begin{enumerate}
   \item
    If $a<-1$, there exist positive constants $C_1,b_1$ and $\delta_1<1$ such
    that every normalized ground state $u$ of $\lambda_1(b)$ satisfies
    for all $b\geq b_1$,
    \[
      \int_{\Domain}
      \Bigl(b^{-1}|(-\ii\nabla-\Ab)u|^2+|u|^2 \Bigr)
      \ee^{\delta_1 b^{1/2}\dist(x,\partial\Inner)}\dd x
      \leq C_1.
    \]
\item
    If $-1< a< \Theta_0^{-1}$, there exist positive constants $C_2,b_2$ and
    $\delta_2 < 1$ such that every normalized ground state $u$ of
    $\lambda_1(b)$ satisfies, for all $b\geq b_2$,
    \[
      \int_{\Domain}
      \Bigl(b^{-1}|(-\ii\nabla-\Ab)u|^2+|u|^2 \Bigr)
      \ee^{\delta_2 b^{1/2}\dist(x,\partial\Domain)}\dd x
      \leq C_2.
    \]
    \item
    If $a>\Theta_0^{-1}$, there exist positive constants $C_3,b_3$ and
    $\delta_3<1$ such that every normalized ground state $u$ of
    $\lambda_1^{\Domain}(b)$ satisfies for all $b\geq b_3$,
    \[
      \int_{\Outer}
      \Bigl(b^{-1}|(-\ii\nabla-\Ab)u|^2+|u|^2 \Bigr)
      \ee^{\delta_3 b^{1/2}\dist(x,\partial\Inner)}\dd x
      \leq C_3.
    \]
\end{enumerate}
\end{lemma}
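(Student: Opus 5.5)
We follow the standard Agmon scheme of \cite[Section~8.2.3]{FH-b}, with the three cases distinguished only by the region where the lower bound on the local energy is strictly larger than the ground state energy. Let \(d\) denote the relevant distance: \(d=\dist(x,\partial\Inner)\) in cases (1) and (3), and \(d=\dist(x,\partial\Domain)\) in case (2). Let \(\Phi=\delta b^{1/2} d/2\) be a Lipschitz weight with \(\abs{\nabla\Phi}\le \delta b^{1/2}/2\). The starting point is the IMS-type identity for a ground state \(u\) with Neumann conditions:
\[
  \int_{\Domain}\abs{(-\ii\nabla-\Ab)(\ee^{\Phi}u)}^2\dd x
  =
  \lambda_1(b)\int_{\Domain}\ee^{2\Phi}\abs{u}^2\dd x
  +\int_{\Domain}\abs{\nabla\Phi}^2\ee^{2\Phi}\abs{u}^2\dd x .
\]
This identity holds because \(\Ab\cdot\nu=0\) and \(\partial_\nu u=0\) on \(\partial\Domain\).

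Next we localize with a partition of unity \(\chi_1^2+\chi_2^2=1\) at scale \(R b^{-1/2}\). Here \(\chi_1\) is supported in \(\set{d<2Rb^{-1/2}}\) and \(\chi_2\) in \(\set{d>Rb^{-1/2}}\). The IMS formula then costs an error of order \(R^{-2}b\). On \(\supp\chi_1\) the weight is bounded by \(\ee^{\delta R}\), so this piece is harmless.

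The core of the proof is a lower bound \(\norm{(-\ii\nabla-\Ab)v}^2\ge (\mu+\epsilon_0)b\norm{v}^2-o(b)\norm{v}^2\) for \(v=\chi_2\ee^\Phi u\), where \(\mu b\) is the leading term of \(\lambda_1(b)\) from Proposition~\ref{prop:criticaleig}. This is where the partition \(\Omega_1,\Omega_2,\Omega_3\) from the proof of Proposition~\ref{prop:criticaleig} is reused. The three cases go as follows.
\begin{enumerate}
\item Case \(a<-1\), with \(\mu=\hat\beta_a\). Away from \(\partial\Inner\) the local energies are \(\abs a\Theta_0 b\) near \(\partial\Domain\), \(\abs a b\) in the bulk of \(\Outer\), and \(b\) in the bulk of \(\Inner\). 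All of these exceed \(\hat\beta_a b\) strictly, because \(\hat\beta_a<\min(\Theta_0\abs a,1)\) for \(a<-1\) by Appendix~\ref{sec:ms}.
\item Case \(-1<a<\Theta_0^{-1}\), with \(\mu=\abs a\Theta_0\). Away from \(\partial\Domain\) the local energies are \(b\) in \(\Inner\), \(\abs a b\) in the bulk of \(\Outer\), and \(\hat\beta_a b\) near \(\partial\Inner\). These are strictly larger than \(\abs a\Theta_0 b\): we use \(\Theta_0<1\), \(\abs a\Theta_0<1\), and \(\hat\beta_a>\abs a\Theta_0\) (for \(a>0\), \(\hat\beta_a=\min(a,1)\)).
\item Case \(a>\Theta_0^{-1}\), with \(\mu=1\). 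We only claim decay inside \(\Outer\), away from \(\partial\Inner\). There the local energies are \(\abs a\Theta_0 b>b\) near \(\partial\Domain\) and \(ab>b\) in the bulk. The weight is supported in \(\Outer\) with \(d=\dist(x,\partial\Inner)\), and on \(\Inner\) we set \(\Phi=0\).
\end{enumerate}

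Plugging these bounds into the identity gives
\[
\bigl(\epsilon_0 b-\tfrac{\delta^2}{4}b-CR^{-2}b-o(b)\bigr)\norm{\chi_2\ee^\Phi u}^2\le C b\,\ee^{2\delta R}\norm{u}^2 .
\]
We choose \(R\) large, then \(\delta\) small, and then \(b\) large. This yields the \(L^2\) bound on \(\ee^{\Phi}u\), which is the stated weighted bound with \(\delta_i=\delta\), since \(\ee^{2\Phi}=\ee^{\delta b^{1/2}d}\). The gradient part, weighted by \(b^{-1}\), follows by reinserting this bound into the identity and using \(\abs{\nabla\Phi}^2\le\delta^2 b/4\).

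The main obstacle is the strict-gap lower bound near the interface \(\partial\Inner\) in case (2) when \(a<0\), and more generally near \(\partial\Inner\) for any \(a\). There the comparison must be with the step model of Appendix~\ref{sec:ms}, together with control of curvature and gauge errors at scale \(b^{-3/8}\). We would import these from \cite[Section~4]{A}, as in the sketch of Proposition~\ref{prop:criticaleig}. The boundary cases \(a=-1\) and \(a=\Theta_0^{-1}\) are excluded precisely because this gap closes there.
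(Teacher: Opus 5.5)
Your proposal is correct and is exactly the standard Agmon argument of \cite[Section~8.2.3]{FH-b} that the paper invokes without giving details: a weighted energy identity, an IMS cutoff at scale $Rb^{-1/2}$, and a local lower bound on the cut-off piece with the strict gaps $\min(|a|\Theta_0,1)>\hat\beta_a$ (case 1), $\min(1,|a|,\hat\beta_a)>|a|\Theta_0$ (case 2, where the step model is only really needed for $-1<a<0$), and $a\Theta_0>1$ (case 3), all correctly identified. In case (3) you take $u$ to be a ground state of $\lambda_1(b)$, which is how the paper uses the lemma later, and the same argument goes through unchanged for the Dirichlet realization on $\Domain$.
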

We omit the proof, since it follows exactly a standard scheme \cite[Section~8.2.3]{FH-b}.
Lemma~\ref{lem:criticalagmon} demonstrates that the values $a = -1$ and $a =
1/\Theta _ 0$ are also critical in the sense that the concentration of the ground
state switches from one region to another. Notice that when $-1<a<\Theta_0^{-1}$, the sign of $a$ does not affect the localization region described in Lemma~\ref{lem:criticalagmon}(ii); both positive and negative values in this interval lead to concentration near the outer boundary $\partial\Domain$. In fact, this is the reason behind the possible inclusion of $-1<a<0$ in the statement of Theorem~\ref{thm:maincritical}.

\subsection{The critical regime for the unit disk (\texorpdfstring{$-1 < a < \Theta_0^{-1}$}{-1<a<1/Theta0})}\label{subsec:criticaldisk}

In the case $-1<a<\Theta_0^{-1}$, we have localization near the boundary of $\Domain$
and an additional flux term coming from the interior of $\Inner$,
\[
  \Phi=\frac{ab\measure{\Outer}}{2\pi}+\frac{b\measure{\Inner}}{2\pi}.
\]
In the case of the disk, we expect this extra flux to contribute to the ground
state energy asymptotics as in the subcritical regime
(Proposition~\ref{prop:disk-subcritical}).

We write an accurate asymptotic expansion for $\lambda_1(b)$ that involves the constants
$\Theta_0$ and $\varphi_0(0)$ from Appendix~\ref{sec:dG}.

\begin{proposition}\label{prop:disk-critical}
  Suppose that \(\Domain\) is the unit disk. Let \(a \neq 0\) be fixed with
  \(-1< a<\Theta_0^{-1}\). Let \(\Phi\) be the flux of the field \(B\) through
  \(\Domain\). There exist constants \(C_0\) and \(C_1\)
  such that if
  \[
    \Delta_b
    \coloneqq
    \inf_{m \in \integers}
    \left|m - \Flux - (|a|b)^{1/2}\xi_0 - C_0 \right|
  \]
  then, as $b\to+\infty$, the lowest eigenvalue satisfies
  \[
    \lambda_1(b)
    =
    \abs {a}\Theta_0b
    - \frac13\varphi_{0}(0)^2(|a|b)^{1/2}
    + \xi_0\varphi_{0}(0)^2\bigl((\Delta_b)^2 + C_1 \bigr)
    + \Ordo(b^{-1/2}).
  \]
\end{proposition}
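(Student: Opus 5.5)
We follow the scheme of the proof of Proposition~\ref{prop:disk-subcritical}, with the outer intensity \(\sigma=b^{\varrho}\) replaced by \(\sigma=|a|b\). The first step is localization. Since \(-1<a<\Theta_0^{-1}\), Lemma~\ref{lem:criticalagmon}(ii) shows that every normalized ground state decays like \(\ee^{-\delta_2 b^{1/2}\dist(x,\partial\Domain)}\). Hence, for fixed \(\eta>0\), the mass and the energy in \(\{\dist(x,\partial\Domain)\geq b^{-1/2+\eta}\}\) are \(\Ordo(b^{-\infty})\). A standard IMS/cut-off argument then reduces \(\lambda_1(b)\), up to \(\Ordo(b^{-\infty})\), to the lowest eigenvalue of the same magnetic operator on the thin annulus \(\mathcal{A}_b=\{1-cb^{-1/2+\eta}<|x|<1\}\subset\Outer\). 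This operator carries a Dirichlet condition on the inner circle and a Neumann condition on \(|x|=1\). The upper bound is immediate from the min-max principle; the lower bound uses the Agmon estimate to control the cut-off error.

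The second step is the gauge reduction. On \(\mathcal{A}_b\) we have \(\curl\Ab=ab\). We write \(\Ab=ab\,\Ab_0+\widetilde{\Ab}\) with \(\Ab_0=(r/2)(-\sin\theta,\cos\theta)\). The remainder \(\widetilde{\Ab}\) is curl free in the annulus, so \(\widetilde{\Ab}=\nabla\varphi+\alpha\Ab_{\mathrm{AB}}\). We fix \(\alpha=\Flux-ab/2\) so that the total circulation along \(|x|=1\) equals \(2\pi\Flux\). Removing \(\nabla\varphi\) by a gauge transformation and passing to polar coordinates decomposes the operator into the radial family
\[
  \Hamiltonian_m(b)=-\frac{\dd^2}{\dd r^2}-\frac1r\frac{\dd}{\dd r}+\frac{1}{r^2}\bigl(m-\Flux+\tfrac{ab}{2}(1-r^2)\bigr)^2,\qquad m\in\integers,
\]
acting on \(L^2((1-cb^{-1/2+\eta},1),r\dd r)\). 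For \(a>0\) this is exactly the family of \cite[Sections~4 and~5]{FS}, with field strength \(\sigma=ab\) and flux \(\Flux\) at the boundary.

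For \(a<0\) the field in the annulus is \(-|a|b\). Complex conjugation, together with \(m\mapsto -m\), maps the problem to field \(|a|b\) and flux \(-\Flux\). The quantity \(\inf_m|m+\Flux-\dots|\) becomes \(\inf_m|m-\Flux-\dots|\) after relabeling \(m\), so \(\Delta_b\) takes the stated form. In this case the constants \(C_0\) and \(C_1\) may acquire a sign convention depending on \(\operatorname{sign}a\); as in Remark~\ref{rem:subcrit-sign-chan}, this is only a bookkeeping adjustment.

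The third step is the analysis of the fibres and the minimization over \(m\), for which we import the fibre asymptotics of \cite{FS}. When \(\mu=m-\Flux-\xi_0(|a|b)^{1/2}=\Ordo(b^{1/4})\), the lowest eigenvalue of \(\Hamiltonian_m(b)\) is
\[
  |a|\Theta_0 b-\tfrac13\varphi_0(0)^2(|a|b)^{1/2}+\xi_0\varphi_0(0)^2\bigl((\mu-C_0)^2+C_1\bigr)+\Ordo(b^{-1/2}).
\]
Here the quadratic term comes from the second derivative of the de Gennes band function \(\mu(\xi)\) at \(\xi_0\), which equals \(2\xi_0\varphi_0(0)^2\) after the \(b^{-1/2}\) rescaling. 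The constant \(C_0\) is produced by the curvature (\(r^{-1}\) and \(r^{-2}\)) corrections at the next order. For \(|\mu|>Ab^{1/4}\), the lowest eigenvalue instead exceeds \(|a|\Theta_0b-C(|a|b)^{1/2}\) with \(C<\varphi_0(0)^2/3\). Minimizing over \(m\in\integers\) therefore selects the integer nearest to \(\Flux+\xi_0(|a|b)^{1/2}+C_0\), which gives the term \((\Delta_b)^2\).

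The main obstacle is verifying that the results of \cite{FS} apply uniformly in the present setting. There the field is a fixed radial profile, whereas here \(\Flux\) grows like \(b\) independently of \(\sigma\). The point to check is that the Aharonov--Bohm part enters only through the shift \(m\mapsto m-\Flux\), so it affects neither the harmonic approximation nor the error terms. For the perturbative expansion, I would first redo the three-term Grushin/quasimode expansion in the variable \(\tau=(|a|b)^{1/2}(1-r)\). Two things must be tracked: that the remainders are \(\Ordo(b^{-1/2})\) uniformly in \(|\mu|\leq Ab^{1/4}\), and that the Dirichlet condition at \(\tau=cb^{\eta}\) contributes only \(\Ordo(b^{-\infty})\).
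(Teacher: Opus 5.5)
Your route is the paper's: it omits the proof and defers to \cite{FS} and the sketch of Proposition~\ref{prop:disk-subcritical}. Your gauge computation and your fibre operator $\Hamiltonian_m(b)$ are correct. The gap is the sign of the $\xi_0$-term, which you dismiss as bookkeeping but which is not. With $t=(|a|b)^{1/2}(1-r)$, your fibre potential satisfies
\[
  \frac{1}{r^2}\Bigl(m-\Flux+\frac{ab}{2}(1-r^2)\Bigr)^2\approx |a|b\,(t-\xi)^2,
  \qquad
  \xi=\operatorname{sign}(a)\,\frac{\Flux-m}{(|a|b)^{1/2}}.
\]
The band function of Appendix~\ref{sec:dG} is minimal at $\xi_0=+\sqrt{\Theta_0}$, so the relevant angular momenta are $m=\Flux-\operatorname{sign}(a)\,\xi_0(|a|b)^{1/2}+\Ordo(b^{1/4})$.

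\textbf{Case $a>0$.} Your centring $m-\Flux-\xi_0(|a|b)^{1/2}=\Ordo(b^{1/4})$ corresponds to $\xi\approx-\xi_0$, where $\mu_1(-\xi_0)>\mu_1(0)=1$. So the fibre expansion you quote is false for those $m$.

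\textbf{Case $a<0$.} Your relabeling is wrong. Conjugation together with $m\mapsto-m$ turns $\inf_m|m+\Flux-\xi_0(|a|b)^{1/2}-C_0|$ into $\inf_m|m-\Flux+\xi_0(|a|b)^{1/2}+C_0|$, because the reflection flips every term, not only $\Flux$.

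This discrepancy cannot be absorbed into $C_0$ or $C_1$. Since $\Flux=b\Phi_0$ varies much faster than $2\xi_0(|a|b)^{1/2}\to\infty$, for $\Phi_0\neq0$ there are $b\to+\infty$ along which one version of $\Delta_b$ vanishes and the other tends to $1/2$. That is an error of order one, not $\Ordo(b^{-1/2})$. Done consistently (the positive-field result plus conjugation symmetry), your argument gives
\[
  \Delta_b=\dist\Bigl(\Flux-\operatorname{sign}(a)\bigl(\xi_0(|a|b)^{1/2}+C_0\bigr),\integers\Bigr).
\]
This matches the displayed definition only for $a<0$ (recall $\Phi_0>0$ when $a>0$), so no choice of constants makes the stated formula hold for both signs of $a$. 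The paper's own sketch of Proposition~\ref{prop:disk-subcritical} uses the same centring $m-(\Flux+\xi_0\sigma^{1/2})$, so you inherited the slip instead of checking it against your own fibre operator. It is harmless for Theorem~\ref{thm:maincritical}, where only the slope $\Phi_0$ of $\Flux$ enters. Still, the step you call bookkeeping is exactly where your proof fails to deliver the statement as written.
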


Note that if \(\Inner\) and \(\Domain\) are concentric disks, then the magnetic field is
radial, and the result is known from~\cite{FS}. Proposition~\ref{prop:disk-critical}
can therefore be viewed as a generalization of the result in~\cite{FS}.
However, the proof in~\cite{FS} only uses the radial structure of the magnetic
field near the boundary of \(\Domain\), which is precisely the situation covered by
Proposition~\ref{prop:disk-critical}. For this reason, we omit the proof and
refer the reader to~\cite{FS} (see also the proof of
Proposition~\ref{prop:disk-subcritical}).

\subsection{The critical regime for generic domains (\texorpdfstring{$-1 < a < \Theta_0^{-1}$}{-1<a<1/Theta0})}\label{subsec:critical-monotonicity}

In this regime, we prove that the one-sided derivative, $\lambda_{1,+}'(b)$, of the ground state energy, $\lambda_1(b)$, is positive for large $b$.  

\begin{proposition}\label{prop:critical-generic-monotone}
  Suppose that $\Domain\subset \mathbb{R}^2$ is not a disk. Let $-1<a<\Theta_0^{-1}$ and let the magnetic field be as in \eqref{eq: mag field crit}. 
  Then
  \[
    \liminf_{b \to +\infty}\lambda_{1,+}'(b) \geq \Theta_0|a| > 0.
  \]
\end{proposition}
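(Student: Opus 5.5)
We follow the scheme of the proof of Proposition~\ref{prop:sub-generic-monotone}, now with $b$ itself as parameter and $\Ab_b = b(\widehat{\Fb}_{\Inner} + a\widehat{\Fb}_{\Outer})$ after a gauge change. The structure is the same as before, but one term changes character, and that term is where the work lies.

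\textbf{Step 1 (gauge).} As in Step~1 there, let $n(\partial\Domain)$ be the set of points of maximal curvature; since $\Domain$ is not a disk, it is a proper subset of $\partial\Domain$. Choose a simply connected $\widehat{\Outer}\subset\Outer$ containing a neighbourhood of $n(\partial\Domain)$. Modify $\Fb_{\Outer}$ by the gradient from \cite[Lemma~8.5.5]{FH-b}, so that $\abs{\widehat{\Fb}_{\Outer}}\le C\dist(x,\partial\Domain)$ on $\widehat{\Outer}$. Modify $\Fb_{\Inner}$ by a cut-off primitive, so that $\widehat{\Fb}_{\Inner}=0$ on $\widehat{\Outer}$. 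The resulting potential $\Ab_b$ is gauge equivalent to $\Ab$ with a $b$-independent gauge, so $\lambda_1(b)$ is unchanged.

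\textbf{Step 2 (two-term asymptotics and localization).} By Lemma~\ref{lem:criticalagmon}(ii), ground states decay like $\ee^{-\delta_2 b^{1/2}\dist(x,\partial\Domain)}$. Hence $\lambda_1(b)$ equals, up to $\Ordo(b^{-\infty})$, the lowest eigenvalue of the operator in $\Outer$ with field $ab$ and a Dirichlet condition on $\partial\Inner$. For $a<0$ we apply complex conjugation, which reduces to field $\abs{a}b$. By \cite[Theorem~8.3.2]{FH-b} applied with field strength $\abs{a}b$, this gives
\[
\lambda_1(b)=\abs{a}\Theta_0 b-\tfrac13\varphi_0(0)^2\kappa_{\max}(\abs{a}b)^{1/2}+\ordo(b^{1/2}).
\]
By \cite[Lemmas~8.5.3, 8.5.4]{FH-b}, we also get the localization estimates \eqref{eq: localiz subcrit}, with $\sigma$ replaced by $b$: namely $\int\dist(x,\partial\Domain)^N\abs{\psi}^2=\Ordo(b^{-N/2})$ and $\int_{\dist(x,n(\partial\Domain))>\varepsilon_0}\abs{\psi}^2=\Ordo(b^{-N})$. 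These yield the analogue of \eqref{eq:sub-generic-gauges}:
\[
\norm{\widehat{\Fb}_{\Outer}\psi}^2=\Ordo(b^{-1}),\qquad \norm{\widehat{\Fb}_{\Inner}\psi}^2=\Ordo(b^{-\infty}).
\]

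\textbf{Step 3 (derivative).} Use Kato analytic perturbation theory and the Feynman--Hellmann argument of Step~3 there. Choose the analytic branch that realises $\lambda_1$ on $[b,b+\delta)$, and let $\psi$ be its eigenfunction at $b$. Since $\Ab_b$ is linear in $b$, we have $\Rb=\partial_b\Ab_b=\widehat{\Fb}_{\Inner}+a\widehat{\Fb}_{\Outer}$ and the error term $E$ vanishes. Comparing Rayleigh quotients then gives, for $\varepsilon=\eta b$,
\[
\lambda_{1,+}'(b)\ge\frac{\lambda_1(b+\varepsilon)-\lambda_1(b)}{\varepsilon}-\varepsilon\norm{\Rb\psi}^2 .
\]
The first term is bounded below by $\abs{a}\Theta_0-Cb^{-1/2}$ by Step~2, since the $b^{1/2}$ term varies by only $\Ordo(\eta b^{1/2})$ over the interval and contributes $\Ordo(b^{-1/2})$ after division by $\eta b$. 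The second term satisfies $\varepsilon\norm{\Rb\psi}^2\le C\eta b\,(b^{-1}+b^{-\infty})\le C\eta$. Letting $b\to\infty$ and then $\eta\to0$ gives the claimed bound $\liminf_{b\to+\infty}\lambda_{1,+}'(b)\ge\Theta_0\abs{a}$.

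\textbf{Main obstacle.} The main difficulty is Step~2: transferring the curvature-localization results of \cite[Section~8.3, 8.5]{FH-b} to our setting. The ground state could in principle also feel the interface $\partial\Inner$, and for $a<0$ the field changes sign across it. This is ruled out by the boundary Agmon estimate, which gives exponentially small mass near $\partial\Inner$. The key input is that for $-1<a<\Theta_0^{-1}$ the threshold $\abs{a}\Theta_0$ lies strictly below both $\hat\beta_a$ and $1$, as used in Proposition~\ref{prop:criticaleig}. Everything else is a near-verbatim transcription of the subcritical proof.
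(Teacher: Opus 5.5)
Your proposal is correct and is essentially the paper's argument. The paper's sketch likewise uses Lemma~\ref{lem:criticalagmon}(ii) to reduce to $\partial\Domain$, obtains the two-term expansion with $\kappa_{\max}$ together with localization near $n(\partial\Domain)$, and then repeats the gauge and Feynman--Hellmann/Rayleigh-quotient argument of Proposition~\ref{prop:sub-generic-monotone} verbatim. Your observation that $E=0$ here, because $\Ab_b$ is linear in $b$, is a harmless simplification. The one slip is that the $\ordo(b^{1/2})$ remainder only gives a lower bound $\abs{a}\Theta_0-\ordo(1)/\eta$ for the difference quotient, not $\abs{a}\Theta_0-Cb^{-1/2}$; this does not affect the conclusion since you send $b\to\infty$ before $\eta\to0$.
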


\begin{proof}[Sketch of the proof] Since $-1<a<\Theta_0^{-1}$, the ground states localize near $\partial\Domain$ by Lemma~\ref{lem:criticalagmon}. This allows us to derive a two-term asymptotic expansion
\[
\lambda_1(b)=\Theta_0|a| b - \frac13\varphi_0(0)^2\kappa_{\max}|a|^{1/2}b^{1/2} + \ordo(b^{1/2}),
\]
and the ground states are localized near the set $\{\kappa=\kappa_{\max}\}$ where the curvature is maximal on $\partial\Domain$. The proof then follows verbatim from that of Proposition~\ref{prop:sub-generic-monotone}.
\end{proof}

\subsection{Proof of Theorem~\ref{thm:maincritical}}

\begin{proof}[Proof of Theorem~\ref{thm:maincritical}]
We first discuss the case  $-1<a < \Theta_0^{-1}$ for the unit disk and for general domains and later the case $a> \Theta_0^{-1}$. As noted earlier, we will prove the theorem allowing negative $a$.
\paragraph{Case $-1 < a < \Theta_0^{-1}$.}~\medskip

\noindent\textbf{Unit disk.} For the unit disk, as in \cite[Theorem~1.7]{FS}, loss of monotonicity results from a competition between the leading term in $\lambda_1(b)$ and the oscillatory term $\Delta_b$. Note that, unlike the subcritical regime, the derivative with respect to $b$ of the leading term is non-zero.  As an immediate consequence of Proposition~\ref{prop:disk-critical}, one can prove that for $-1 < a < \Theta_0^{-1}$ and $a\neq 0$, defining
\[
    \Phi_0=\frac{a|\Outer|}{2\pi}+\frac{|\Inner|}{2\pi},
\]
it holds that
\begin{itemize}
\item If $\Phi_0<\frac{|a|\Theta_0 }{\xi_0\varphi_{0}(0)^2},$ then there exists $b_0>0$ such that the function $b\mapsto\lambda_1(b)$ is monotone increasing on $[b_0,+\infty)$.
\item If $\Phi_0>\frac{|a|\Theta_0 }{\xi_0\varphi_{0}(0)^2},$ then for all $b_0>0,$ there exist $b_1$ and $b_2$ with $b_0<b_1<b_2$ and
\[\lambda_1(b_0)<\lambda_1(b_1),\quad\lambda_1(b_2)<\lambda_1(b_1).\]
\end{itemize}
For the proof, we refer to \cite[Section 6, Proof of Theorem~1.7]{FS}.\medskip

\noindent\textbf{General (non-disk) domains.}
We suppose now that $\Domain$ is not a disk. Thanks to Proposition~\ref{prop:critical-generic-monotone}, there exists $b_0>0$ such that the function $b\mapsto\lambda_1(b)$ is monotone increasing on $[b_0,+\infty)$.

\paragraph{Case $a > \Theta_0^{-1}$.}

The case $a>\Theta_0^{-1}$ resembles the Dirichlet Laplacian with constant magnetic field in $\Inner$. To leading order, the asymptotics for $\lambda_1(b)$ agrees with that of the Dirichlet magnetic Laplacian (see Proposition~\ref{prop:criticaleig}).
For general domains including disks, we will prove that there exists $b_0>0$ such that the function
  $b\mapsto\lambda_1(b)$ is increasing on $[b_0,+\infty)$.

    Let us write $g(b)=\lambda_1(b)-b$. By Proposition~2.2 in \cite{FH}, if
    \[
      \text{for all $\epsilon\in(-1,1)$, $\quad\abs {g(b+\epsilon)-g(b)}\to0$
      as $b\to+\infty$,}
    \]
    then the function $b\mapsto \lambda_1(b)$ is monotone increasing on some
    interval $[b_0,+\infty)$.

    It suffices to prove that $g(b)=o(1)$ as $b\to+\infty$. Let $\lambda_1 ^
    {\Inner}(b)$ be the lowest eigenvalue of the Laplacian with constant magnetic
    field in $\Inner$ and Dirichlet boundary condition on $\partial \Inner$. By the
    min-max principle, we have
    \[
      \lambda_1(b)\leq \lambda_1^\Inner(b)
    \]
    and by \cite[Proposition~4.1]{HS},
    \[
      \lambda_1^\Inner(b)= b+\Ordo(b^{-\infty}).
    \]

    Let $\lambda_1^\Domain(b)$ be the lowest eigenvalue of the Laplacian with magnetic field $B$ in $\Domain$ and Dirichlet boundary condition on $\partial \Inner$. By Lemma~\ref{lem:criticalagmon}, we have
    \[
      \lambda_1(b)\geq \lambda_1^\Domain(b)+\Ordo(b^{-\infty}),
    \]
    and by \cite[Lemma~1.4.1]{FH-b} (see also \cite[Proposition~5.1]{KW} for the
    case of square integrable $B$)
    \[
      \lambda_1^\Domain(b)\geq \inf B.
    \]
     Since \(a>1\), we have \(\inf B=b\). Thus, collecting the upper and lower bounds, we get \( \lambda_1(b)= b+\Ordo(b^{-\infty})\). Consequently,  \(g(b)=o(1)\) as \(b\to+\infty\). This finishes the proof of Theorem~\ref{thm:maincritical} in the case $a>\Theta_0^{-1}$.
\end{proof}

\begin{remark}[The critical regime with $a<-1$]\label{rem:crit-a<-1}
We  expect monotonicity  in the case where $a<-1$, since the ground states are concentrated near the boundary of $\Inner$ (see Lemma~\ref{lem:criticalagmon}). Since the topology of $\Inner$ is trivial, the flux is simply the applied magnetic field in $\Inner$ without additional geometric contributions. Although we do not discuss the case $a<-1$ in detail, we highlight some technical aspects:
\begin{enumerate}
    \item For generic non-disk domains, we anticipate that proving the monotonicity of the function $b\mapsto \lambda_1(b)$ on some interval $[b_0,+\infty)$ is possible. As we did in the proof of Proposition~\ref{prop:sub-generic-monotone}, one only needs the localization properties of the ground states near the boundary points of $\Inner$ with maximal curvature, and the leading order asymptotics of $\lambda_1(b)$. These properties were established in \cite{AK}.
    \item For the disk, we need the asymptotic expansion of the ground state energy with three terms, like the one in Proposition~\ref{prop:disk-critical}. While this formula is not mentioned in the literature for our specific magnetic field, it can be obtained by using the results in \cite{AK} and the methods in \cite[Appendix C]{FH} (note that the first two terms were given in \cite{AK} and only the third term---the oscillatory term---is missing). With this formula in hand, one can reduce the problem of monotonicity to the positivity of a certain coefficient, exactly as in \cite[Proposition~2.7]{FH}.
\end{enumerate}
\end{remark}

\section{The supercritical regime}\label{sec:supercritical}
In this section, we analyze the supercritical regime, namely the case in which the magnetic field $\OuterB(b)$ in the outer region \(\Outer\) grows faster than the field $b$ in the inner region \(\Inner\). More precisely, we consider, for some \(\varrho > 1\), a magnetic field of the form 
\begin{equation}\label{eq: mag field super crit}
B(x)=\begin{cases}
    b&\text{in $\Inner$},\\
     b^{\varrho}&\text{in $\Outer$}.
\end{cases}\end{equation}
The main result of this section is the proof of Theorem~\ref{thm:supercritical}. 
Heuristically, the strong magnetic field in \(\Outer\) acts as a confining barrier, energetically penalizing low-energy states in the outer region and forcing the eigenfunctions to concentrate inside \(\Inner\). In the large field limit, the problem therefore behaves like the magnetic Laplacian in the inner domain \(\Inner\) with Dirichlet condition on \(\partial \Inner\). Consequently, one would expect the behavior of the lowest eigenvalue to be entirely governed by the operator in $\Inner$, while oscillatory flux effects originating from the outer region would disappear. Hence, strong diamagnetism is expected independently of the geometry of $\Domain$.

Although Theorem~\ref{thm:supercritical} only applies to magnetic fields of the form \eqref{eq: mag field super crit}, some of the results established in this section remain valid for magnetic field intensities in \(\Outer\) of the form
\[
\OuterB(b)=\pm b^\varrho .
\]
Thus, part of our analysis also covers the case of negative magnetic fields in the outer region \(\Outer\).

We start with an asymptotic result for the lowest eigenvalue.

\begin{proposition}\label{prop:supercriticaleig}
  Suppose that $\varrho>1$ and $|\OuterB(b)|=b^{\varrho}$. The lowest eigenvalue
  $\lambda_1(b)$ satisfies, as $b\to+\infty$,
  \[
    \lambda_1(b)=b+o(b).
  \]
\end{proposition}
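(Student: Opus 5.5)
The plan is to prove matching upper and lower bounds of the form $b+o(b)$.

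\emph{Upper bound.} I would use the min-max principle with test functions supported in $\Inner$. Extending by zero any function in $H^1_0(\Inner)$ gives an element of $H^1(\Domain)$, and the quadratic form then only sees $\Inner$. There $\Ab = b\Fb_\Inner + \OuterB(b)\Fb_\Outer$ and $\curl \Ab = b$. Hence
\[
  \lambda_1(b)\le\lambda_1^{\Inner,D}(b),
\]
the lowest Dirichlet eigenvalue in $\Inner$ with constant field $b$. This is gauge invariant on the simply connected set $\Inner$, so the potential $\OuterB(b)\Fb_\Outer$ (curl-free in $\Inner$) is a gradient there and can be gauged away. By \cite[Proposition~4.1]{HS}, as already used in the proof of Theorem~\ref{thm:maincritical}, $\lambda_1^{\Inner,D}(b)=b+\Ordo(b^{-\infty})$. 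The sign of $\OuterB$ plays no role here.

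\emph{Lower bound.} I would use an IMS localization at a scale adapted to $\partial\Inner$. Take a partition of unity $\chi_1^2+\chi_2^2+\chi_3^2=1$ of the type used in the sketch of Proposition~\ref{prop:criticaleig}, at scale $h=b^{-\tau}$ with $\tau\in(0,1/2)$ to be chosen: $\chi_1$ supported in $\Outer$ away from $\partial\Inner$, $\chi_2$ in $\Inner$ away from $\partial\Inner$, and $\chi_3$ in a band of width $2h$ around $\partial\Inner$. The localization error is $\Ordo(h^{-2})=\Ordo(b^{2\tau})=o(b)$.
\begin{itemize}
\item On $\supp\chi_2$ the field is $b$, and \cite[Lemma~1.4.1]{FH-b} gives energy $\ge b$ (the function vanishes on the inner cut boundary).
\item On $\supp\chi_1$ the field is $\pm b^\varrho$, and \cite[Proposition~8.2.2]{FH-b} gives energy $\ge \Theta_0 b^\varrho - Cb^{3\varrho/4}\gg b$, since $\varrho>1$.
\item On the band $\supp\chi_3$ the field is $b$ on one side and $\pm b^\varrho$ on the other, and I need a lower bound $\ge b(1-o(1))$.
\end{itemize}

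\emph{Main obstacle: the interface band.} I would first try the pointwise bound $\norm{(-\ii\nabla-\Ab)v}^2\ge \int |B|\,|v|^2$ for compactly supported $v$ (Lemma~1.4.1 of \cite{FH-b}). This requires $v$ to vanish on $\partial\Domain$, which holds because the band is at distance $\varepsilon_0$ from $\partial\Domain$. It also requires the field not to change sign, which holds when $\OuterB=+b^\varrho$ and gives $\int|B|\,|v|^2\ge b\norm{v}^2$ directly. In that case the band needs no special treatment, and one can even drop the partition for the lower bound: restricting to functions supported away from $\partial\Domain$ is unnecessary if the energy near $\partial\Domain$ is already controlled by $\Theta_0 b^\varrho$.

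For the sign-changing case $-b^\varrho$, I would instead compare with a magnetic step model with fields $1$ and $-b^{\varrho-1}\to-\infty$ after scaling by $b$. Its bottom of spectrum tends to the Dirichlet half-plane value, which is $\ge 1$. Alternatively, I would argue that on the side $\Outer$ the large field $b^\varrho$ forces the function to be essentially zero within distance $b^{-\varrho/2}\ll b^{-1/2}$, and then apply the $|B|$ bound separately on each side after a finer cutoff at scale $b^{-\varrho/2+\eta}$, whose localization cost $b^{\varrho-2\eta}$ is compensated by $b^\varrho$ on the outer side.

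Combining the three pieces gives $\lambda_1(b)\ge b-o(b)$, and together with the upper bound this proves the statement.
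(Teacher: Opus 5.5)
Your upper bound and the three-piece IMS decomposition are the same as the paper's, and so are the bounds on the purely inner and purely outer pieces. For $\OuterB(b)=+b^{\varrho}$ your treatment of the interface band is correct and more elementary than the paper's. Since $B\ge b>0$ on all of $\Domain$, the identity behind \cite[Lemma~1.4.1]{FH-b} gives $\ge b\norm{v}^2$ for every $v$ compactly supported in $\Domain$. So you need neither the step model nor a control of the comparison error with it as $z=b^{\varrho-1}\to\infty$. You do still need one fixed cutoff separating a collar of $\partial\Domain$, because that identity requires compact support. With that cutoff the localization error is $\Ordo(1)$, and you even get $\lambda_1(b)\ge b-C$.

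The gap is the sign-changing case $\OuterB(b)=-b^{\varrho}$, which the statement includes. Your first option is exactly the paper's route: compare the band with the step model at $z=-b^{\varrho-1}$. But its decisive input, $\hat\beta_z\to1$ as $z\to-\infty$, is only asserted, by identifying the limit with the Dirichlet half-plane problem. This is not automatic, for three reasons:
\begin{itemize}
\item $\hat\beta_z<1$ for every $z<-1$, so the limit is approached from below;
\item the infimum runs over the non-compact parameter $\xi$;
\item the one-dimensional analogue of the $\abs{B}$-bound leaves either $-\abs{z}\int_{t>0}\abs{f}^2$ or, for $\xi<0$, a boundary term $-2\abs{\xi}\abs{f(0)}^2$.
\end{itemize}
The paper proves this limit as Proposition~\ref{prop:beta-a}. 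Via $\hat\beta_a=\abs{a}\beta_{1/a}$ it suffices to show $\beta_a=\abs{a}+o(\abs{a})$ as $a\to0^-$. The relations $\xi_a=\sqrt{\beta_a+\gamma_a^2}$ and $\beta_a\ge\hat\lambda(\gamma_a,\xi_a)$ from \cite{AK} rule out $\xi_a/\abs{a}$ staying bounded. Then $\beta_a\ge\min(\abs{a}\mu_1(\xi_a/\abs{a}),\Theta_0)$ from \cite{AKPS}, together with $\mu_1(\xi)\to1$ as $\xi\to+\infty$, concludes.

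Your second option does not replace this. After the cutoff at scale $b^{-\varrho/2+\eta}$, the inner piece still straddles $\partial\Inner$, where $B$ changes sign. So the $\abs{B}$-bound cannot be applied on each side separately; that would require the pieces to vanish on $\partial\Inner$. Applying it with the sign adapted to $\Inner$ leaves the error $-b^{\varrho}\int_{\Outer}\abs{v}^2$, and the Agmon decay in $\Outer$ does not make this $o(b)$. A function of energy $\Ordo(b)$ can have $\norm{v}^2_{L^2(\partial\Inner)}\sim b^{1-\varrho/2}$, hence mass of order $b^{1-\varrho}$ in the outer layer of width $b^{-\varrho/2}$. That error is then of the same order as the main term $b$.
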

Before presenting the proof of Proposition~\ref{prop:supercriticaleig}, we give two remarks.

\begin{remark}[Consistency with the critical regime]\label{rem:outereig}
The regime considered in Proposition~\ref{prop:supercriticaleig} corresponds to the critical regime in Proposition~\ref{prop:criticaleig} with $a=\pm b^{\varrho-1}$. Since $|a|\gg 1$, the asymptotics in Proposition~\ref{prop:supercriticaleig} is consistent with those in Proposition~\ref{prop:criticaleig} for large $|a|$
(by Proposition~\ref{prop:beta-a}, $\hat\beta_a\to 1$ as $a\to-\infty$).
\end{remark}
\begin{remark}[Fixed magnetic field in the inner domain]\label{rem:DirichletLap}
Suppose that the magnetic field $B$ is strong in $\Outer$, equal to $b\gg 1$, and fixed in $\Inner$, equal to a constant $a$. Then, arguing as in Proposition~\ref{prop:asyequi}, we obtain that for any fixed $n\in\naturalnumbers$,
\[\lambda_n(b)=\lambda_n^{\Inner}(a)+o(1),\]
where $\lambda_n^{\Inner}(a)$ is the $n$th eigenvalue (counting multiplicity) of the Laplacian with magnetic field $a$ in $\Inner$ and Dirichlet boundary condition.
\end{remark}

\begin{proof}[Proof of Proposition~\ref{prop:supercriticaleig}]
The upper bound can be obtained easily by comparing with the lowest eigenvalue
$\lambda_1^{\Inner}(b)$ of the Dirichlet Laplacian with constant magnetic field
$b$ in $\Inner$,
\[
  \lambda_1(b)\leq \lambda_1^{\Inner}(b)=b+o(b).
\]
For the lower bound, we estimate the quadratic form $\Quadraticform$ introduced in \eqref{eq:def-qf} from below. When integrating over a subset $D\subset\Omega$ instead of $\Omega$, we denote the resulting integral by $\Quadraticform[\psi,D]$. By means of a partition of unity, we reduce to bounding $\Quadraticform[\psi,D_i]$ from below for $i=1,2,3$ with
\begin{itemize}
  \item  $D_1=\{x\in\Outer\colon   \dist(x,\partial\Inner)>b^{-3/8}\}$;
  \item $D_2=\{x\in\Inner\colon \dist(x,\partial\Inner)>b^{-3/8}\}$;
  \item $D_3=\{x\in\Domain\colon \dist(x,\partial\Inner)<2b^{-3/8}\}$. 
\end{itemize}

In $D_1$ and $D_2$, we assume that $\psi$ vanishes on
$\{\dist(x,\partial\Inner)=b^{-3/8}\}$, while in $D_3$, we assume that 
$\psi$ is compactly supported in $D_3$. As explained in the critical regime, we still
have
\[
  \Quadraticform[\psi,D_i]
  \geq
  \begin{cases}
    \bigl(\Theta_0 \abs {\sigma} +o(|\sigma|)\bigr)\int _ {D_i} \abs {\psi} ^ 2\dd x
    &\text{ for } i=1\\
    b \int_{D_i} \abs {\psi} ^ 2\dd x
    &\text{ for } i=2.
  \end{cases}
\]
In the third region, we compare with the model in Appendix~\ref{sec:ms} as done in
\cite[Section~4]{A}, and we have
\[
  \Quadraticform[\psi,D_3]\geq \bigl(\hat\beta_z b+o(b) \bigr)\int_{D_3}|\psi|^2\dd x,
\]
where $z= \sigma b^{-1}= \pm b^{\varrho-1}$, and $\hat\beta_z$ is introduced in \eqref{eq:def-hat-beta}.

Knowing that $|\sigma|\gg b$, we get after collecting all the aforementioned estimates,
\[
  \lambda_1(b)\geq \min(1,\hat\beta_z)b+o(b).
\]
For $\sigma>0$, we have seen in Appendix~\ref{sec:ms} that $\hat\beta_z=1$. For
$\sigma<0$, $z\to-\infty$ since $\varrho>1$, and we have by
Proposition~\ref{prop:beta-a}, $\hat\beta_z\to 1$. Therefore
\[
  \lambda_1(b)\geq b+o(b)
\]
as required.
\end{proof}

With the asymptotics in Proposition~\ref{prop:supercriticaleig}, we get by Agmon estimates that the ground state decays in $\Outer$.

\begin{lemma}\label{lem:superagmon}
  Suppose that $\varrho>1$ is fixed and $|\OuterB(b)|=b^{\varrho}$. There exist positive constants $C_2,b_2$ and $\delta_2<1$
    such that every normalized ground state $u$ of $\lambda_1(b)$
    satisfies for all $b\geq b_2$,
    \[
      \int_{\Outer}
        \Bigl(b^{-\varrho}|(-\ii\nabla-\Ab)u|^2+|u|^2 \Bigr)
        \ee^{\delta_2 b^{\varrho/2}\dist(x,\partial\Inner)}\dd x
      \leq C_2.
    \]
\end{lemma}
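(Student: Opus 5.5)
We will run the standard Agmon argument of \cite[Section~8.2.3]{FH-b}, with exponential weight growing away from \(\partial\Inner\) into \(\Outer\) at the length scale \(b^{-\varrho/2}\) of the strong outer field. The two inputs are the energy asymptotics \(\lambda_1(b)=b+o(b)\) from Proposition~\ref{prop:supercriticaleig}, and a lower bound of order \(\Theta_0 b^{\varrho}\) for the quadratic form on functions supported in \(\Outer\) away from \(\partial\Inner\). The gap \(\Theta_0 b^{\varrho}-b\gtrsim b^{\varrho}\) is what produces the decay.

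\textbf{Step 1: the weighted identity.} Let \(t(x)=\dist(x,\partial\Inner)\) on \(\Outer\). Let \(\chi\) be a smooth cutoff with \(\chi=0\) on \(\{t\le b^{-\varrho/2}\}\cup\Inner\), \(\chi=1\) on \(\{t\ge 2b^{-\varrho/2}\}\), and \(|\nabla\chi|\le Cb^{\varrho/2}\). Set \(\Psi=\delta_2 b^{\varrho/2} t\chi\), or use a Lipschitz regularization of \(t\), so that \(|\nabla\Psi|\le C\delta_2 b^{\varrho/2}\) on \(\Domain\) and \(\Psi\) is bounded on \(\Inner\). The IMS/Agmon identity for the Neumann eigenfunction \(u\) on \(\Domain\) gives
\[
  \Quadraticform[\ee^{\Psi}u]-\int_{\Domain}|\nabla\Psi|^2\ee^{2\Psi}|u|^2\dd x=\lambda_1(b)\norm{\ee^{\Psi}u}^2 .
\]
This holds because the Neumann condition kills the boundary terms.

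\textbf{Step 2: localize and bound from below.} Take a partition of unity \(\eta_1^2+\eta_2^2=1\) with \(\eta_1\) supported in \(\{t>b^{-\varrho/2}\}\cap\Outer\), \(\eta_2\) supported in \(\{t<2b^{-\varrho/2}\}\cup\Inner\), and \(|\nabla\eta_i|\le Cb^{\varrho/2}\).

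For the \(\eta_1\) piece, which is supported in \(\Outer\) and vanishes near \(\partial\Inner\), we use \cite[Proposition~8.2.2]{FH-b} as in the proof of Proposition~\ref{prop:supercriticaleig}:
\[
  \Quadraticform[\eta_1\ee^{\Psi}u]\ge(\Theta_0 b^{\varrho}-Cb^{3\varrho/4})\norm{\eta_1\ee^{\Psi}u}^2 .
\]
For the \(\eta_2\) piece, the weight is bounded, so \(\ee^{\Psi}\le C\), and we only use \(\Quadraticform\ge0\).

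Collecting the IMS errors \(Cb^{\varrho}\) and \(C\delta_2^2b^{\varrho}\), and using \(\lambda_1(b)\le 2b\), we obtain
\[
  \bigl(\Theta_0 b^{\varrho}-C\delta_2^2b^{\varrho}-Cb^{3\varrho/4}-2b\bigr)\norm{\eta_1\ee^{\Psi}u}^2\le C b^{\varrho}\norm{\eta_2\ee^{\Psi}u}^2\le Cb^{\varrho}.
\]
Choosing \(\delta_2\) small and using \(\varrho>1\) (so that \(b\ll b^{\varrho}\)), the bracket is at least \(cb^{\varrho}\). Hence \(\int_{\Outer}\ee^{2\Psi}|u|^2\le C\).

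\textbf{Step 3: the gradient term.} Inserting the bound of Step 2 back into the identity of Step 1 gives \(\Quadraticform[\ee^{\Psi}u]\le Cb^{\varrho}\). Then
\[
  \ee^{\Psi}(-\ii\nabla-\Ab)u=(-\ii\nabla-\Ab)(\ee^{\Psi}u)+\ii\nabla\Psi\,\ee^{\Psi}u ,
\]
so \(\int b^{-\varrho}\ee^{2\Psi}|(-\ii\nabla-\Ab)u|^2\le C\). Replacing \(2\Psi\) by \(\Psi\) (that is, renaming \(\delta_2\)) gives the stated form with \(t\) in place of \(t\chi\). The two differ only on a strip of width \(O(b^{-\varrho/2})\), where the weights are comparable.

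\textbf{Main obstacle.} The delicate point is Step 2's lower bound for the \(\eta_1\) piece. The sign of \(\OuterB\) is irrelevant there, but the constant \(C\) from \cite[Proposition~8.2.2]{FH-b} must be uniform for functions that vanish on the moving curve \(\{t=b^{-\varrho/2}\}\). The bound \(\Theta_0|\OuterB|\) requires Neumann behaviour only on \(\partial\Domain\). Near the artificial curve we have a Dirichlet condition, which only helps, via \(\ge|\OuterB|\) in the interior \cite[Lemma~1.4.1]{FH-b}. So we will cover \(\{t>b^{-\varrho/2}\}\) by balls of radius \(b^{-3\varrho/8}\) away from \(\partial\Domain\) and boundary patches near \(\partial\Domain\). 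These pieces do not see the curvature of the inner curve, which gives the uniformity.
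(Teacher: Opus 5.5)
Your proposal is correct and is exactly the standard Agmon scheme of \cite[Section~8.2.3]{FH-b} that the paper invokes without writing out. It consists of the Agmon identity with a weight vanishing on $\Inner$, an IMS localization away from $\partial\Inner$ with the lower bound $\Theta_0|\OuterB|-C|\OuterB|^{3/4}$ there (valid for either sign of $\OuterB$), and the gap coming from $\lambda_1(b)\le 2b\ll b^{\varrho}$ since $\varrho>1$. The only cosmetic slip is that the IMS error on the transition strip is not literally controlled by $\|\eta_2\ee^{\Psi}u\|^2$, but it is bounded by $Cb^{\varrho}\|u\|^2$ because $\Psi\le 2\delta_2$ on that strip, and that is all you need.
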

We omit the proof, since it follows exactly a standard scheme \cite[Section~8.2.3]{FH-b}.
Now we turn to the question of monotonicity of $\lambda_1(b)$ with respect to $b$.

\begin{proof}[Proof of Theorem~\ref{thm:supercritical}]
Suppose now that $\sigma(b)=b^{\varrho}$, hence positive. Thanks to Lemma~\ref{lem:superagmon}, starting from the estimate in Proposition~\ref{prop:supercriticaleig}, we get
\[
    \lambda_1(b)=b+\Ordo(b^{-\infty}).
\]
This follows by the same scheme as the proof of Theorem~\ref{thm:maincritical} in the case $a>\Theta_0^{-1}$.
Thus, by  \cite[Proposition 2.2]{FH}, there exists $b_0>0$ such that the function $b\mapsto \lambda_1(b)$ is monotone increasing on $[b_0,+\infty)$.
\end{proof}

\appendix

\section{Two technical proofs}\label{app:proofs}

We present here the proofs of Propositions~\ref{prop:conv-ef} and \ref{prop:ev-high-N} that hold for the perfect oscillatory regime.

\subsection{Proof of Proposition~\ref{prop:conv-ef}}

In the hypotheses of Proposition~\ref{prop:conv-ef},  $\beta_*$ is fixed in the base interval and the sequence $b_k$ is as in \eqref{eq:def-bk}.  Note that, using Proposition~\ref{prop:asyequi},  $u^{(j)}_k$ is such that
\[
    \Hamiltonian(b_k) u^{(j)}_k = \lambda_j(b_k)  u^{(j)}_k = \lambda_j^{\Outer}(b_k)  u^{(j)}_k + o(1),\qquad \forall j=1, \ldots,n,
\]
Using Lemma \ref{lem:HHOO} and the definition of $w_k^{(j)}$ together with \eqref{eq:def-bk}, we can deduce that
\begin{equation}\label{eq:ev-eq-limit}
  \Hamiltonian ^ {\Outer}(\beta_\ast)w_k^{(j)}
  =
  \lambda_j(b_k)w_k^{(j)}\quad\text{in $\Outer$}, \qquad \forall \quad j=1, \ldots , n
\end{equation}
with
\[
  \Hamiltonian ^ {\Outer}(\beta_*)
  =
  (-\ii\nabla-\Ab_*)^2,\quad
  \Ab_*
  \coloneq
  \beta_*\Fb_{\Inner}+a\Fb_{\Outer}.
\]
By elliptic estimates and Lemma~\ref{lem:asyequi-H1}, we deduce that, for every
$\varepsilon\in(0,\varepsilon_0)$, $w_k^{(j)}$ is bounded in
$H^2(\Outer_\varepsilon)$, where $\Outer_\varepsilon$ was introduced in
Notation~\ref{notation}. By Lemma~\ref{lem:asyequi-L2} and a Cantor diagonal
sequence argument, we obtain a subsequence (not relabeled) such that, for every
$\varepsilon\in(0,\varepsilon_0)$, we have
\[
  w_k^{(j)}\rightharpoonup w_*^{(j)}\quad\text{in $H^2(\Outer_\varepsilon)$},\quad w_k^{(j)}\to w_*^{(j)}\quad\text{in $H^1(\Outer_\varepsilon)$},
\]
and $w_k^{(j)}\to w_*^{(j)}$ in $L^2(\Outer)$. Note that $w_*^{(j)}$ satisfies Neumann boundary condition on the outer boundary of $\Outer$; it inherits this condition from $w_k^{(j)}$ and the convergence in $H^2(\Outer_\varepsilon)$. Moreover, by Lemma~\ref{lem:asyequi-H1}, $w_*^{(j)}\in
H^1(\Outer)$ and $w_*^{(j)}|_{\partial \Inner}=0$.
Thus, by \eqref{eq:ev-eq-limit} and Proposition~\ref{prop:asyequi}, $w_*^{(j)}$ is a weak solution of 
\[\begin{cases}
     (-\ii\nabla-\Ab_*)^2w_*^{(j)}=\lambda_j^\Outer(\beta_*)w_*^{(j)}&\text{in $\Outer$,}\\
     w_*^{(j)}=0&\text{on the inner boundary of $\Outer$,}\\
     \nu\cdot\nabla w_*^{(j)}=0&\text{on the outer boundary of $\Outer$.}
\end{cases}\]
By Lemma~\ref{lem:asyequi-ON}, $\{w_*^{(j)}\}_{1\leq j\leq n}$
is orthonormal in $L^2(\Outer)$. Hence $w_*^{(j)}$ is an eigenfunction of $\Hamiltonian^\Outer(\beta_*)$. 

Using the triangle inequality, we write
  \[
    \norm {\nabla w_{k}^{(j)}- \nabla w_{*}^{(j)}}_{L^2(\Outer;\complexes^2)}\leq I_1(k)+I_2(k),
  \]
  where
  \[
    I_1(k)
    =
    \norm {(-\ii\nabla - \Ab_*)(w_{k}^{(j)}-w_{*}^{(j)}) } _ {L ^ 2(\Outer)},
    \quad
    I_2(k) = \norm {\Ab_*(w_{k}^{(j)}-w_{*}^{(j)})} _ {L ^ 2(\Outer;\complexes^2)}.
  \]
  Clearly, $I_2(k)\to0$ as $k\to+\infty$, by the convergence in $L^2(\Outer)$.
  Concerning $I_1(k)$, we use the polarization identity and integration by
  parts\footnote{Recall that $w_{*}^{(j)}|_{\partial\Inner}=0$ and $w_{k}^{(j)}$
  satisfies Neumann boundary condition on $\partial\Domain$.},
  \begin{equation*}
    \begin{aligned}
        I_1(k)^2
        &
        =
        \|(-\ii\nabla -\Ab_*)w_{k}^{(j)}\|_{L^2(\Outer)}^2
        + \|(-\ii\nabla - \Ab_*)w_{*}^{(j)}\|_{L^2(\Outer)}^2
        \\
        &\qquad
        -2\re
        \langle
        (-\ii\nabla - \Ab_*)^2w_{k}^{(j)},w_{*}^{(j)}\rangle_{L^2(\Outer)}.
    \end{aligned}
  \end{equation*}
We observe the following:
\begin{enumerate}
  \item By Lemma~\ref{lem:HHOO},
    \[
      \norm {(-\ii\nabla -\Ab_*)w_{k}^{(j)}} _ {L^2(\Outer)} ^ 2
      =
      \norm {(-\ii\nabla-\Ab)u_k^{(j)}} _ {L^2(\Outer)} ^ 2
      \leq
      \lambda_j(b_k).
    \]
    \item By \eqref{eq:goal-rayleigh}, Proposition~\ref{prop:asyequi}, and Lemma \ref{lem:HHOO},
    \[
      \norm {(-\ii\nabla - \Ab_*)w_{*}^{(j)}} _ {L^2(\Outer)} ^ 2
      \leq
      \lambda_j^\Outer(\beta_*).
    \]
  \item By \eqref{eq:ev-eq-limit},
    \[ \innerproduct { (-\ii\nabla - \Ab_*)^2w_{k}^{(j)},w_{*}^{(j)} } _ {L^2(\Outer)}
      =
      \lambda_j(b_k)
      \innerproduct { w_{k}^{(j)},w_{*}^{(j)} } _ {L ^ 2(\Outer)}.
    \]
\end{enumerate}
Thus, by Proposition~\ref{prop:asyequi} and Lemma~\ref{lem:asyequi-L2}, we obtain that
\(I_1(k)\to0\) as $k\to+\infty$, and consequently that $w_k^{(j)}\to w_*^{(j)}$ in $H^1(\Outer)$.

Now that we have proved \eqref{eq: conv min in H1omega0}, we have by \cite[Theorem 1]{MM} that $|w_{k}^{(j)}|\to |w_*^{(j)}|$ in $H^1(\Outer)$, thereby proving \eqref{eq: conv mod H1}. The last formula in Proposition~\ref{prop:conv-ef} follows immediately from \eqref{eq:ev-eq-limit} and the convergence of $w_k^{(j)}$.

\subsection{Proof of Proposition~\ref{prop:ev-high-N}}
    We split the proof in several steps.

  \noindent\textbf{Step 1} (Lowest eigenvalue in $\Inner$)\textbf{.}

  For any $0 <\lambda< \Theta_0$, the restriction on $\lambda$ ensures that there are no eigenvalues below
  $\lambda b$ for the Neumann or Dirichlet realizations of $(-\ii\nabla-\Ab)^2$
  in $L^2(\Inner)$, for $b$ sufficiently large. In fact, since $\curl\Ab=b$ in
  $\Inner$, the lowest eigenvalue for the Neumann realization in $\Inner$  is
  asymptotic to $\Theta_0 b$ as $b\to+\infty$ (see
  \cite[Thm.~8.1.1]{FH-b}).\medskip

\noindent\textbf{Step 2} (Dirichlet--Neumann bracketing)\textbf{.}

For a self-adjoint operator $T$ with pure discrete spectrum and for $\Lambda\in\reals$, we let
$N(T,\Lambda)=\dim(\mathbf{1}_{(-\infty,\Lambda)}(T))$, the number of
eigenvalues of $T$ below $\Lambda$ (counting multiplicities).

For a given open set $\omega\subset \Domain$, let $\Hamiltonian^{\omega,D}(b)$
(respectively $\Hamiltonian ^ {\omega,N}(b)$) be the Dirichlet realization
(respectively Neumann realization) of $\Hamiltonian (b)$ in the domain $\omega$.
Thanks to the identification \(L^2(\Domain)\cong L^2(\Inner)\oplus L^2(\Outer)\),
we consider the operators
\(\Hamiltonian^{\Inner,D}(b)\oplus\Hamiltonian^{\Outer,D}(b)\) and
\(\Hamiltonian^{\Inner,N}(b)\oplus\Hamiltonian^{\Outer,N}(b)\) as self-adjoint
operators in $L^2(\Domain)$. Using the obvious decomposition of the quadratic
form
\[
  \Quadraticform[\psi]
  =
  \Quadraticform^{\Inner}[\psi]
  +
  \Quadraticform^{\Outer}[\psi]
\]
and the comparison of form domains, we deduce that
\[
  \Hamiltonian ^ {\Inner,N}(b) \oplus \Hamiltonian ^ {\Outer,N}(b)
  \leq
  \Hamiltonian (b)
  \leq
  \Hamiltonian ^ {\Inner,D}(b) \oplus \Hamiltonian^{\Outer,D}(b).
\]
Thus, by the min-max principle,
\[
  N(\Hamiltonian ^ {\Inner,D}(b),\Lambda)
  +
  N(\Hamiltonian ^ {\Outer,D}(b),\Lambda)
  \leq
  N(\Hamiltonian (b),\Lambda)
  \leq
  N(\Hamiltonian ^ {\Inner,N}(b),\Lambda)
  +
  N(\Hamiltonian ^ {\Outer,N}(b),\Lambda).
\]
Suppose now that $1<\Lambda<\Theta_0 b$. By Step 1, we get that $N(\Hamiltonian ^
{\Inner,D}(b),\Lambda)=N(\Hamiltonian ^{\Inner,N}(b),\Lambda)=0.$ Thus,
\[
  N(\Hamiltonian ^{\Outer,D}(b),\Lambda)
  \leq
  N(\Hamiltonian (b),\Lambda)
  \leq
  N(\Hamiltonian ^{\Outer,N}(b),\Lambda).
\]
Choose an integer \(p\) such that \(\beta=b-2\pi p/\measure {\Inner}\) belongs to
the base interval $[0,2\pi/\measure {\Inner})$. By Lemma~\ref{lem:HHOO},
$\Hamiltonian ^ {\Outer,N}(b)$ and $\Hamiltonian ^ {\Outer,D}(b)$ are unitarily equivalent to
$\Hamiltonian ^ {\Outer,N}(\beta)$ and $\Hamiltonian ^ {\Outer,D}(\beta)$ respectively.
Moreover, for every $\delta>0$, there exists $C>0$ and we have the quadratic
form comparison
\[
  (1-\delta)\int_{\Outer}\abs {\nabla \psi} ^ 2\dd x - \delta^{-1}C
  \leq
  \int_{\Outer}\abs {(-\ii\nabla-\beta\Fb_\Inner-a{\Fb_\Outer})\psi}^2\dd x
  \leq (1+\delta)\int_{\Outer}\abs {\nabla \psi} ^ 2\dd x + \delta^{-1}C.
\]
Consequently, choosing $\delta=\Lambda^{-1/2}$ and using the min-max principle, we obtain that
\begin{equation}\label{eq:bracketing}
  N(-\Delta^{\Outer,D},\Lambda-(2+C)\Lambda^{1/2})
  \leq
  N(\Hamiltonian (b),\Lambda)
  \leq N(-\Delta^{\Outer,N},\Lambda+(2+C)\Lambda^{1/2}).
\end{equation}
Now, taking $\Lambda\gg 1$ but still $\Lambda < \Theta_0b$, we have the Weyl law for the Dirichlet/Neumann Laplacian in $\Outer$ (see \cite[Chapter 3, p. 168]{FLW})
\begin{equation}\label{eq:Weyl-Lap}N(-\Delta^{\Outer,\#},\Lambda\pm (2+C)\Lambda^{1/2})\sim \bigl(\Lambda\pm (2+C)\Lambda^{1/2}\bigr)\frac{|\Outer|}{4\pi}\quad(\#\in\{N,D\}).
\end{equation}

\noindent\textbf{Step 3} (Comparison of eigenvalues and Weyl laws)\textbf{.}

  First, by Proposition \ref{prop:asyequi}, and in particular \eqref{eq:simplebounds}, we have
  \[\lambda_n(b)\leq \lambda_n^\Outer(b)\quad\text{for all $n\in\mathbb N$.}\]
  We argue by contradiction. Suppose \eqref{eq: weyl n}  is false: $\sup_{n\in \mathcal{I}_\lambda(b)}|\lambda_n(b)/\lambda_n^{\Outer}(b) -1|$ does not converge to $0$. Then, there exists a non-negative constant $c_*<1$ and
  sequences\footnote{$n_j\to+\infty$ in light of
  Proposition~\ref{prop:asyequi}}
  \[
    n_j \to +\infty,\quad b_j\to+\infty,
  \]
  such that $n_j \in\mathcal{I}_\lambda(b_j)$, i.e.,
  \[
    \lambda_{n_j}^{\Outer}(b_j)<\lambda b_j \text{ for all $j$},
  \]
  and
  \[
    \lim_{j\to+\infty}\frac{\lambda_{n_j}(b_j)}{\lambda_{n_j}^{\Outer}(b_j)}
    = c_*.
  \]
  By \eqref{eq:bracketing} and the Weyl law for the Laplacian \eqref{eq:Weyl-Lap}, we have
 \[N(\Hamiltonian (b_j),\lambda_{n_j}(b_j))\underset{j\to+\infty}{\sim} \frac{|\Outer|}{4\pi}\lambda_{n_j}(b_j).\]
 Similarly, arguing like in \eqref{eq:bracketing} but for $\Hamiltonian^\Outer(b)$, the Dirichlet--Neumann bracketing for $\Hamiltonian ^{\Outer}(b)$ yields
 \[  N\bigl(\Hamiltonian ^{\Outer}(b_j),\lambda_{n_j}^{\Outer}(b_j)\bigr)
    \underset{j\to+\infty}{\sim}\frac{|\Outer|}{4\pi}\lambda_{n_j}^{\Outer}(b_j).\]
 We claim that the aforementioned Weyl laws yield
 \begin{equation}\label{eq:claim}
 \lambda_{n_j}(b_j)\underset{j\to+\infty}{\sim}\frac{4\pi}{|\Outer|}n_j,\quad
 \lambda_{n_j}^{\Outer}(b_j)\underset{j\to+\infty}{\sim}\frac{4\pi}{|\Outer|}n_j.
 \end{equation}
 Once the above claim is proved, we obtain the desired contradiction. \medskip

\noindent\textbf{Step 4} (Proof of \eqref{eq:claim})\textbf{.}
The definition of $N(\Hamiltonian (b_j),\lambda_{n_j}(b_j))$ yields
 \[ N(\Hamiltonian (b_j),\lambda_{n_j}(b_j))\geq n_j.\]
 From this, and the Weyl law, we find
 \[ \liminf_{j\to+\infty}\frac{\lambda_{n_j}(b_j)}{n_j}\geq \frac{4\pi}{|\Outer|}.\]
Now, for $\delta\in(0,1)$, we have
\[ N(\Hamiltonian (b_j),(1-\delta)\lambda_{n_j}(b_j))\leq n_j,\]
and by the Weyl law
\[ \limsup_{j\to+\infty}\frac{\lambda_{n_j}(b_j)}{n_j}\leq (1-\delta)^{-1}\frac{4\pi}{|\Outer|}. \]
Since this is true for any $\delta\in(0,1)$, we get after taking $\delta\to0^+$ that
\[ \limsup_{j\to+\infty}\frac{\lambda_{n_j}(b_j)}{n_j}\leq \frac{4\pi}{|\Outer|}.\]
This finishes the proof of the first formula in \eqref{eq:claim}. The second formula can be proved in the same manner.

\begin{remark}\label{rem:ev-high-N}
  If we consider energy levels below $\lambda b$ with $\lambda>\Theta_0$,
  the eigenvalues of $\Hamiltonian ^\Outer(b)$ do not approximate all the corresponding
  eigenvalues of $\Hamiltonian (b)$. In fact, according to Lemma~\ref{lem:HHOO} $\Hamiltonian^\Outer(b)$ is unitarily equivalent to $\Hamiltonian^\Outer(b_*)$ with $b_*$ in the base interval $[0,2\pi/\abs{\Inner})$,   and by the Weyl law for the Laplacian in $\Outer$
  \[
    N(\Hamiltonian ^\Outer(b),\lambda b)=N(\Hamiltonian ^\Outer(b_*),\lambda b)
    \sim \frac{|\Outer|}{4\pi}\lambda b.
  \]
  However,  by Dirichlet--Neumann bracketing, we have
  \[
    N(\Hamiltonian (b),\lambda b)
    \geq  N(\Hamiltonian ^{\Outer}(b),\lambda b)+ N(\Hamiltonian ^{\Inner,D},\lambda b).
  \]
  The counting function of $\Hamiltonian ^{\Inner,D}$ below the energy level $\lambda b$ satisfies \cite[Eq.~(1.3)]{F}
  \[ N(\Hamiltonian ^{\Inner,D},\lambda b)\sim \frac{|\Inner|}{4\pi}\lambda b+o(b)\quad\text{as }b\to+\infty.\]
   Thus, Proposition~\ref{prop:ev-high-N} does not hold for $\lambda>\Theta_0$, otherwise this would violate the stated Weyl laws above.
\end{remark}

\section{Model operators}\label{sec:pre}

\subsection{The de Gennes model}\label{sec:dG}

For $\xi\in\reals$, let $\mu_1(\xi)$ be the lowest eigenvalue of the operator
\[ -\partial_t^2+(t-\xi)^2\]
in $L^2(\reals_+)$, subject to the Neumann boundary condition at the origin, $\varphi'(0)=0$.

Let $\Theta_0=\inf_{\xi\in\reals}\mu_1(\xi)$, also called the de Gennes constant. It is a well-known fact that $\frac12<\Theta_0<1$ and with $\xi_0=\sqrt{\Theta_0}$, we have $\mu_1(\xi_0)=\Theta_0$ (see \cite[Section~3.2]{FH-b}). Also, $\mu_1(0)=1$ and $\lim_{\xi\to+\infty}\mu_1(\xi)=1$.

We denote by $\varphi_0$ the unique normalized ground state satisfying
\[ -\varphi _ 0 '' + (t-\xi_0)^2\varphi_0=\Theta_0\varphi_0,\quad \varphi_0>0, \quad \varphi_0'(0)=0.\]

\subsection{Family of Schr\"odinger operators}\label{sec:ms}

For $a,\xi\in\reals$, let $\mu_1(a,\xi)$ be the lowest eigenvalue of the Schr\"odinger operator
\[ -\partial_t^2+V_{a,\xi}(t)\]
in $L^2(\reals)$, where
\[
  V_{a,\xi}(t)=
    \begin{cases}
      (t - \xi) ^ 2  &\text{if } t > 0,\\
      (at - \xi) ^ 2 &\text{if } t < 0.
    \end{cases}
\]
We introduce the spectral parameter \cite{AKPS}
\[
  \beta_a =\inf_{\xi\in\reals}\mu_1(a,\xi)
\]
and we recall the following properties
\begin{itemize}
\item $\beta_a=\Theta_0$ for $a=-1$ and $\beta_a=a$ for $0\leq a\leq 1$;
\item For $-1<a<0$, there exists a unique $\xi_a>0$ such that $\beta_a=\mu_1(a,\xi_a)$; furthermore,
$|a|\Theta_0<\beta_a<\min(\Theta_0,|a|)$ (see \cite[Theorem~1.1]{AK} where in the notation of \cite{AK}, $\xi_a=-\zeta_a$);
\item The function $[-1,1]\ni a\to\beta_a$ is continuous.
\end{itemize}
We also introduce the lowest eigenvalue $\hat\mu_1(a,\xi)$ of the Schr\"odinger operator
\[ -\partial_t^2+\hat V_{a,\xi}(t)\]
in $L^2(\reals)$, where
\[
  \hat V_{a,\xi}(t)
  =
  \begin{cases}
    (at - \xi) ^ 2 & \text{if } t > 0,\\
    (t - \xi) ^ 2  & \text{if } t < 0.
  \end{cases}
\]
The corresponding spectral parameter is
 \begin{equation}\label{eq:def-hat-beta}
 \hat\beta_a =\inf_{\xi\in\reals}\hat\mu_1(a,\xi).
 \end{equation}
By the change of variable $\tau=|a|^{1/2}t$, it is easy to check that,
\[
  \hat\beta_a=|a|\beta_{\frac1{a}}\quad\text{ for }a\neq 0.
\]
Consequently we have
\[
  \begin{cases}
    \Theta_0 < \hat{\beta}_a < \min(|a|\Theta_0,1),
    & \text{ for $a<-1$,}\\
    \hat{\beta}_a = \Theta_0,
    & \text{ for $a=-1$,}\\
    \hat{\beta}_a = 1,
    & \text{ for $a\geq 1$.}\\
  \end{cases}
\]
Also the change of variable $\tau=-t$ yields
\[
  \hat\beta_a=\beta_a \quad\text { for }a\not=0.
\]
and we get
\[
 \begin{cases}
 |a|\Theta_0 < \hat\beta_a < \Theta_0 &\text{ for $-1<a<0$,}\\
 \hat\beta_a=a&\text{ for $0<a<1$.}
\end{cases}
\]
Finally, we determine the behavior of $\hat\beta_a$ as $a\to-\infty$.
\begin{proposition}\label{prop:beta-a}
As $a\to-\infty$, $\hat\beta_a\to 1$.
\end{proposition}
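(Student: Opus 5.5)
The plan is to reduce to the lower bound. The upper bound $\hat\beta_a\le 1$ for $a<-1$ is already recorded just before the statement ($\Theta_0<\hat\beta_a<\min(|a|\Theta_0,1)$). So it suffices to prove $\liminf_{a\to-\infty}\hat\beta_a\ge 1$. Write $a=-A$ with $A\to+\infty$, so that
\[
\hat V_{a,\xi}(t)=(At+\xi)^2 \ \ (t>0),\qquad \hat V_{a,\xi}(t)=(t-\xi)^2\ \ (t<0).
\]
I argue by contradiction. Suppose there are $A_n\to+\infty$, $\xi_n\in\reals$ and $L^2$-normalized $u_n$ whose quadratic form satisfies $q_n[u_n]\le c<1$.

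\textbf{Step 1 (the right half-line carries little mass).} On $(0,\infty)$ I bound the energy from below by the Neumann realization of $-\partial_t^2+(At+\xi)^2$. The scaling $\tau=\sqrt{A}\,t$ turns it into $A\bigl(-\partial_\tau^2+(\tau+\xi/\sqrt A)^2\bigr)$, whose bottom is $A\mu_1(-\xi/\sqrt A)\ge A\Theta_0$, uniformly in $\xi$. Hence
\[
\int_0^\infty|u_n|^2\,dt\le \frac{c}{\Theta_0A_n}.
\]
Since also $\|u_n'\|\le\sqrt c$, the one-dimensional trace inequality $|u(0)|^2\le 2\|u\|_{L^2(0,\infty)}\|u'\|_{L^2(0,\infty)}$ gives $|u_n(0)|^2\le C A_n^{-1/2}$.

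\textbf{Step 2 (discarding large $|\xi_n|$).} If $\xi_n\ge0$, then $\hat V\ge \xi_n^2$ on $\reals$ and also $\hat V\ge (t-\xi_n)^2$ on $t<0$. In that case the left part is bounded below by the Dirichlet oscillator on $(-\infty,0)$ with well centre $\xi_n\ge 0$, whose bottom is $\ge1$. To implement this, I replace $u_n$ on $t<0$ by $w_n=u_n-u_n(0)\eta_n$, where $\eta_n$ is a cutoff supported in $[-\delta_n,0]$ with $\eta_n(0)=1$.

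If $\xi_n\to-\infty$ (along a subsequence), then on $[-|\xi_n|/2,0]$ one has $(t-\xi_n)^2\ge \xi_n^2/4$. I then use a quadratic partition of unity (IMS) on $t<0$ at a fixed scale. The piece near $t=0$ has energy $\gtrsim \xi_n^2$ times its mass, so that mass must vanish. The far piece is Dirichlet at a point where $t<0$ and the centre $\xi_n$ lies well inside the region, so its energy is $\ge 1$.

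\textbf{Step 3 (bounded $\xi_n$).} For bounded $\xi_n$ I take $\delta_n=A_n^{-1/8}$. The correction $u_n(0)\eta_n$ then has energy $O(|u_n(0)|^2(\delta_n^{-1}+\delta_n))=O(A_n^{-3/8})\to0$, and its $L^2$ norm also tends to $0$. The function $w_n$ vanishes at $0$, so
\[
\int_{-\infty}^0\bigl(|w_n'|^2+(t-\xi_n)^2|w_n|^2\bigr)\ge \mu^D(\xi_n)\,\|w_n\|^2\ge \|w_n\|^2 .
\]
Here $\mu^D(\xi)$ is the Dirichlet oscillator bottom on the half-line, which is $>1$ for every $\xi$ (it is $\ge$ the full-line value $1$ with strict inequality). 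Combining this with Step 1, $\|w_n\|_{L^2(-\infty,0)}\to1$. Hence $\liminf q_n[u_n]\ge1$, contradicting $c<1$.

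The main obstacle is uniformity in $\xi$, which is why Step 2 treats $\xi_n\to\pm\infty$ separately. The cutoff correction in Step 3 costs roughly $\xi_n^2\delta_n|u_n(0)|^2$, which is only harmless when $\xi_n$ stays bounded or when the potential itself dominates near $t=0$. I expect the IMS splitting for $\xi_n\to-\infty$ to need the most care. In particular, the localization error $O(1)$ from a fixed-scale partition must be beaten by the $\xi_n^2/4$ lower bound on the near piece.
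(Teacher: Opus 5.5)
Your proof is correct, but it takes a different route from the paper. The paper does not work with near-minimizers in the original variables. It uses the scaling identity $\hat\beta_a=|a|\beta_{1/a}$ to reduce the claim to $\beta_{a}=|a|+o(|a|)$ as $a\to0^-$, and takes the upper bound $\beta_a<|a|$ from \cite{AKPS}. For the lower bound it uses structural information about the optimal momentum $\xi_a$ from \cite{AK}, namely $\xi_a=\sqrt{\beta_a+\gamma_a^2}$ and $\beta_a\ge\hat\lambda(\gamma_a,\xi_a)$ for the Robin de~Gennes problem. This rules out $\xi_a\to0$, hence gives $\xi_a/|a|\to+\infty$. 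The Neumann-splitting bound of \cite[Lemma~A.3]{AKPS} then yields $|a|(1+o(1))$, because the well on the weak-field side sits deep inside its half-line. You instead let the strong-field side do the work. The uniform de~Gennes bound $A\Theta_0$ on $(0,\infty)$ makes the mass there $O(A^{-1})$ and the interface trace $O(A^{-1/4})$. So the weak half-line effectively inherits a Dirichlet condition, and any Dirichlet restriction of the harmonic oscillator has bottom $\ge1$ whatever the well position. This gives a self-contained argument that needs nothing about the minimizing $\xi$: not its existence, uniqueness, or the identities from \cite{AK}. A plain Neumann split would only give $\Theta_0$ on the weak side, which is exactly why the paper needs that information. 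The price is your case analysis in $\xi_n$, which is lighter than you anticipate. Large positive $\xi_n$ is excluded outright, since $\hat V\ge\xi_n^2$ gives energy at least $\xi_n^2>c$, so the cutoff is never needed there. For $\xi_n\to-\infty$, the inequality $q_n[u_n]\ge\frac{\xi_n^2}{4}\int_{-|\xi_n|/2}^0|u_n|^2\,dt$ makes the IMS localization error $O(\xi_n^{-2})$, so there is no real competition with an $O(1)$ term.
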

\begin{proof}
Since $\hat\beta_a=|a|\beta_{\frac1a}$ for $a<0$, it suffices to show that
\begin{equation}\label{eq:beta-a}
  \beta_a= |a|+o(|a|)\quad \text{as } a \to 0^-.
\end{equation}
We know that by \cite[Theorem~2.6 and Proposition A.6]{AKPS}
\[
  |a|\Theta_0<\beta_a < |a|
  \quad
  \text{for $-1 < a < 0$},
\]
hence $\beta_a\to0$ as $a\to0^-$. Passing to a subsequence, suppose that
$\xi_a/|a|\to c_*$ as $a\to0^-$, with $c_*\in[0,+\infty]$. We will prove that
$c_*=+\infty$.

Recall that, by \cite[Theorem~1.1 and Eq. (27)]{AK},
\[
  \xi_a = \sqrt{\beta_a+\gamma_a^2},
  \quad
  \gamma_a = \phi_a'(0)/\phi_a(0),
  \quad \beta_a\geq \hat\lambda(\gamma_a,\xi_a),
\]
where $\phi_a$ is a normalized and positive ground state of $\beta_a$, and
$\hat\lambda(\gamma,\xi)$ is the lowest eigenvalue of
\[
  -\partial_t^2+(t-\xi)^2
\]
in $L^2(\reals_+)$ with the boundary condition $\varphi'(0)=\gamma\varphi(0)$.
Note that $\hat\lambda(0,\xi)=\mu_1(\xi)$ is the lowest eigenvalue of the
de Gennes model in Appendix~\ref{sec:dG}.

If $\xi_a\to 0$, then $\gamma_a\to0$ and
$\hat\lambda(\gamma_a,\xi_a)\to\lambda(0,0)=\mu_1(0)=1$, which violates
$\beta_a\geq \hat\lambda(\gamma_a,\xi_a)$. Therefore, $c_*$ cannot be finite.

From \cite[Lemma~A.3]{AKPS},
\[
  \beta_a\geq\min\Bigl(|a|\mu_1(\xi_a/|a| ),\mu_1(\xi_a)\Bigr)\geq \min(|a|\mu_1(\xi_a/|a| ),\Theta_0 )\quad(-1<a<0),
\]
and knowing that $\xi_a/|a|\to+\infty$, we get $\mu_1(\xi_a/|a| )\to1$ and
\[
  \beta_a\geq |a|+o(|a|)\quad\text{as }a \to 0^-.
\]
\end{proof}

\subsection*{Acknowledgments}

Part of this work was carried out during a visit of A. Kachmar at LMU Munich and Lund University in March
2025, supported by CUHKSZ under grant No. UDF01003322 and by the Deutsche Forschungsgemeinschaft (DFG, German Research Foundation) via TRR
352 – Project-ID 470903074. A. Kachmar was partially
supported by a startup fund at AUB (grant no. 513125). E. L. Giacomelli was partially funded by the Deutsche Forschungsgemeinschaft (DFG, German Research Foundation) via TRR
352 – Project-ID 470903074 and supported by Gruppo Nazionale per la Fisica Matematica in Italy.

\end{document}